\patchcmd{\@maketitle}{\artauthors}{{\artauthors}}{}{}
\def\eqref#1{equation~(\ref{#1})}
\def\1{\bf{1}}
\newcommand{\Norm}[1]{\left\| #1 \right\|}
\newcommand{\norm}[1]{\left\| #1 \right\|_2}
\def\inner#1#2{\langle #1, #2 \rangle}
\def\vzero{{\bf{0}}}
\def\vone{{\bf{1}}}
\def\va{{\bf{a}}}
\def\vb{{\bf{b}}}
\def\ve{{\bf{e}}}
\def\vg{{\bf{g}}}
\def\vs{{\bf{s}}}
\def\vu{{\bf{u}}}
\def\vv{{\bf{v}}}
\def\vx{{\bf{x}}}
\def\vy{{\bf{y}}}
\def\vz{{\bf{z}}}
\def\fE{{\mathcal{E}}}
\def\fG{{\mathcal{G}}}
\def\fI{{\mathcal{I}}}
\def\fM{{\mathcal{M}}}
\def\fO{{\mathcal{O}}}
\def\fR{{\mathcal{R}}}
\def\fV{{\mathcal{V}}}
\def\BE{{\mathbb{E}}}
\def\BI{{\mathbb{I}}}
\def\BP{{\mathbb{P}}}
\def\BR{{\mathbb{R}}}
\def\BZ{{\mathbb{Z}}}
\def\mF {{\bf F}}
\def\mG {{\bf G}}
\def\mI {{\bf I}}
\def\mL {{\bf L}}
\def\mS {{\bf S}}
\def\mU {{\bf U}}
\def\mW {{\bf W}}
\def\mX {{\bf X}}
\def\mY {{\bf Y}}
\def\mZ {{\bf Z}}
\def\prog{{\rm{prog}}}
\newcommand{\R}{\mathbb{R}}
\def\Ddots{\mathinner{\mkern1mu\raise\p@
\vbox{\kern7\p@\hbox{.}}\mkern2mu
\raise4\p@\hbox{.}\mkern2mu\raise7\p@\hbox{.}\mkern1mu}}
\newcommand*{\rom}[1]{\expandafter\@slowromancap\romannumeral #1@}
\def\FM {{{\texttt{AccGossip}}}}
\def\bg {{{\bar \vg}}}
\def\bs {{{\bar \vs}}}
\def\bx {{{\bar \vx}}}
\def\by {{{\bar \vy}}}
\def\bL {{{\bar L}}}
\def\Lm {L_{\max}}
\def\hK {{{\hat K}}}
\def\hrho {{{\hat \rho}}}
\def\bz {{{\bar \vz}}}
\def\tf {{{\tilde f}}}
\def\bkappa {{{\bar \kappa}}}
\def\DEAREST{{\texttt{DEAREST$^+$}}}
\def\DESTRESS{{\texttt{DESTRESS}}}
\def\DGET{{\texttt{D-GET}}}
\def\GTSARAH{{\texttt{GT-SARAH}}}
\def\GTPAGE{{\texttt{GT-PAGE}}}
\def\EDSGD{{\texttt{EDSGD}}}
\def\DRONE{{\texttt{DRONE}}}
\def\DGDGT{{\texttt{DGD-GT}}}
\def\hK {{{\hat K}}}
\theoremstyle{plain}%
\newtheorem{lemma}{Lemma}
\newtheorem{theorem}{Theorem}
\newtheorem{proposition}{Proposition}%
\newtheorem{corollary}{Corollary}
\newtheorem{assumption}{Assumption}
\newtheorem{definition}{Definition}
\theoremstyle{remark}
\newtheorem{remark}{Remark}
\begin{document}

\title[On the Complexity of Decentralized Smooth Nonconvex Finite-Sum Optimization]{On the Complexity of Decentralized Smooth Nonconvex Finite-Sum Optimization\footnote{The conference version of this manuscript is published on ICML 2024 \cite{bai2024complexity}, which contains the results under the PL condition (Section \ref{sec:PL}) in the special case of $L=\bL$ and $n=1$.}}

\author[1]{\fnm{Luo} \sur{Luo}} \email{luoluo@fudan.edu.cn}

\author[1]{\fnm{Yunyan} \sur{Bai}} \email{yybai22@m.fudan.edu.cn}

\author[2]{\fnm{Lesi} \sur{Chen}} \email{chenlc23@mails.tsinghua.edu.cn}

\author[3]{\fnm{Yuxing} \sur{Liu}} \email{yuxing6@illinois.edu}

\author*[4]{\fnm{Haishan} \sur{Ye}} \email{yehaishan@xjtu.edu.cn}

\affil[1]{\normalsize \orgdiv{School of Data Science}, \orgname{Fudan University}}

\affil[2]{\normalsize \orgdiv{Institute for Interdisciplinary Information Sciences}, \orgname{Tsinghua Univerisity}}

\affil[3]{\normalsize  \orgdiv{Siebel School of Computing and Data Science}, \orgname{University of Illinois Urbana-Champaign}}

\affil[4]{\normalsize \orgdiv{School of Management}, \orgname{Xi’an Jiaotong University}}

\abstract{We study the decentralized optimization problem $\min_{{\bf x}\in{\mathbb R}^d} f({\bf x})\triangleq \frac{1}{m}\sum_{i=1}^m f_i({\bf x})$, where the local function on the $i$-th agent has the form of $f_i(\vx)\triangleq \frac{1}{n}\sum_{j=1}^n f_{i,j}(\vx)$ 
and every individual $f_{i,j}$ is smooth but possibly nonconvex.
We propose a stochastic algorithm called DEcentralized probAbilistic Recursive gradiEnt deScenT (\DEAREST) method, 
which achieves an $\epsilon$-stationary point at each agent with the communication rounds of $\tilde{\mathcal O}(L\epsilon^{-2}/\sqrt{\gamma}\,)$,
the computation rounds of~$\tilde{\mathcal O}(n+(L+\min\{nL, \sqrt{n/m}\bar L\})\epsilon^{-2})$,  
and the local incremental first-oracle calls of~${\mathcal O}(mn + {\min\{mnL, \sqrt{mn}\bar L\}}{\epsilon^{-2}})$, where $L$ is the smoothness parameter of the objective function, $\bar L$ is the mean-squared smoothness parameter of all individual functions, and $\gamma$ is the spectral gap of the mixing matrix associated with the network.
We then establish the lower bounds to show that the proposed method is near-optimal.
Notice that the smoothness parameters $L$ and $\bar L$ used in our algorithm design and analysis are global, leading to sharper complexity bounds than existing results that depend on the local smoothness.
We further extend \DEAREST~to solve the decentralized finite-sum optimization problem under the Polyak--{\L}ojasiewicz condition, also achieving the near-optimal complexity bounds.  
}

\keywords{decentralized optimization, nonconvex optimization, smoothness parameter, variance reduction, Polyak--{\L}ojasiewicz condition}

\maketitle

\section{Introduction}
We study the decentralized optimization problem
\begin{align}\label{prob:main}
\min_{\vx\in\BR^d} f(\vx)\triangleq\frac{1}{m}\sum_{i=1}^m f_i(\vx),
\end{align}
over a connected network with $m$ agents, where $f_i:\BR^d\to\BR$ is the local function on the~$i$-th agent that has the finite-sum structure with $n$ individual functions as follows
\begin{align}\label{func:local}
    f_i(\vx) \triangleq \frac{1}{n}\sum_{j=1}^n f_{i,j}(\vx).
\end{align}
We suppose each individual function $f_{i,j}:\BR^d\to\BR$ is smooth but possibly nonconvex.
This formulation includes a lot of applications in statistics \cite{antoniadis2011penalized,zhong2023online}, signal processing \cite{facchinei2015parallel,nedic2018network,sun2018geometric,sun2016complete}, and machine learning \cite{bottou2018optimization,hendrikx2021optimal,wai2017decentralized,yang2019survey,allen2016variance}.
In decentralized scenario, all of agents target to collaboratively solve problem (\ref{prob:main}) and each of them can only communicate with its neighbors.
We focus on the complexity for achieving the approximate stationary point of the global objective at every agent.

For decentralized optimization, the limitation on the communication protocol leads to the local agents cannot access the exact global information at each round, which leads to the requirement of communication rounds to reduce the consensus error.
The gradient tracking~\cite{nedic2017achieving,pu2021distributed,qu2019accelerated,shi2015extra} is a useful technique to approximate the average of local gradients and make the local first-order estimator accurate.
Directly extending (stochastic) gradient descent methods to decentralized setting \cite{lian2017can,lu2021optimal,sundhar2010distributed,chen2015learning,chen2012diffusion,yuan2016convergence,sun2019distributed,nedic2009distributed} cannot take the advantage of the popular finite-sum structure in local functions.
It is well-known that the algorithms with stochastic recursive gradient estimator \cite{nguyen2017sarah,fang2018spider,pham2019proxsarah,li2021page,carmon2020first} can achieve the optimal incremental first-order oracle (IFO) complexity for finding the approximate stationary point of the finite-sum nonconvex function under the mean-squared smooth assumption.
\citet{sun2020improving} first combined stochastic recursive gradient estimator with gradient tracking to solve decentralized nonconvex finite-sum optimization problem by proposing Decentralized Gradient Estimation and Tracking (\DGET).
Later, \citet{xin2022fast} and \citet{zhan2022efficient} proposed \GTSARAH~and efficient decentralized
stochastic gradient descent (\EDSGD) respectively to improve the complexity in terms of the dependency on the numbers of agents and individual functions.
\citet{li2022destress} proposed DEcentralized STochastic REcurSive gradient methodS (\DESTRESS), which introduces Chebyshev acceleration \cite{arioli2014chebyshev} to achieve the tighter dependency on the spectral gap of the mixing matrix associated with the network.
\citet{metelev2024decentralized} further considered the nonconvex problem over the time-varying network.
Additionally, \citet{lu2021optimal,yuan2022revisiting} studied the tightness of decentralized nonconvex optimization in the online setting.

It is worth noting that existing works \cite{sun2020improving,xin2022fast,li2022destress,metelev2024decentralized,lu2021optimal,yuan2022revisiting} for decentralized nonconvex optimization only consider the local smoothness parameters, which may be arbitrary larger than the global ones.
Furthermore, their analysis for the computation complexity focuses on one of the overall local incremental first-order oracle (LIFO) calls and the number of computation rounds.
Noticing that in each computation round, a distributed algorithm can make partial agents access their LIFO and allow other agents skip their local computation steps \cite{maranjyan2022gradskip,mishchenko2022proxskip}.
Therefore, the LIFO calls and the computation rounds should be addressed separately.
Recently, \citet{ye2023multi,liu2024decentralized} studied the distributed optimization by considering the global smoothness dependency and the partial participated protocol, while their results only address the convex problem.

In this paper, we refine the setting of decentralized smooth
nonconvex finite-sum optimization (\ref{prob:main}) by distinguishing the different smoothness parameters and considering the partial participation protocol.
We proposed a novel stochastic algorithm called DEcentralized probAbilistic Recursive  gradiEnt deScenT (\DEAREST), achieving the $\epsilon$-stationary point at every agent with the communication rounds of $\tilde{\mathcal O}(L\epsilon^{-2}/\sqrt{\gamma}\,)$,
the computation rounds of $\tilde{\mathcal O}(n+(L+\min\{nL, \sqrt{n/m}\bar L\})\epsilon^{-2})$,  
and the LIFO calls of~${\mathcal O}(mn + {\min\{mnL, \sqrt{mn}\bar L\}}{\epsilon^{-2}})$, 
where $L$ is the smoothness parameter of the global objective function $f$, $\bL$ is the mean-squared smoothness parameter of all individual functions $\{f_{i,j}\}_{i,j=1}^{m,n}$, and $\gamma$ is the spectral gap of the mixing matrix associated with the network.
We then establish lower complexity bounds with respect to $\epsilon$, $m$, $n$, $L$, $\bL$, and~$\gamma$ to show the near-optimality of our method.
Notice that the smoothness parameters $L$ and $\bar L$ in our results are global, leading to sharper complexity bounds than existing ones that depend on the local smoothness.
For single-machine scenario (i.e., $m=1$), our theory indicates incremental first-order (IFO) complexity of ${\mathcal O}(n + {\min\{nL, \sqrt{n}\bar L\}}{\epsilon^{-2}})$, 
which is a trade-off between the complexity of ${\mathcal O}(nL{\epsilon^{-2}})$ from vanilla gradient descent~\cite{nesterov2018lectures,carmon2020lower,carmon2021lower} and the complexity of ${\mathcal O}(n + \sqrt{n}\bar L{\epsilon^{-2}})$ from stochastic variance-reduced methods \cite{fang2018spider,pham2020proxsarah,wang2019spiderboost,li2021page,zhou2018stochastic,zhou2019lower}.
We further apply \DEAREST~to solve the decentralized finite-sum problem under the Polyak--{\L}ojasiewicz (PL) condition \cite{lojasiewicz1963topological,polyak1963gradient,yue2023lower}, 
which achieves the $\epsilon$-suboptimal solution at every agent with the communication rounds of~$\tilde\fO(\kappa\ln(1/\epsilon))$, the computation rounds $\tilde\fO((n+\kappa+\min\{n\kappa, \sqrt{{n}/{m}}\bar \kappa\})\ln(1/\epsilon))$, and the LIFO calls of~$\fO((mn + \min\{mn\kappa, \sqrt{mn}\bar\kappa\})\ln(1/\epsilon))$, where $\kappa\triangleq L/\mu$ is the global condition \mbox{number}, $\bar\kappa\triangleq\bL/\mu$ is the mean-squared condition number, and $\mu$ is the PL parameter.
We also provide lower bounds to show above upper complexity bounds under the PL condition are near-optimal.

\section{Preliminaries}\label{sec:preliminaries}

We formally introduce the notations and problem settings used in this paper.

\subsection{Notations}

We use bold lower-case letters for vectors and bold upper-case letters for matrices. 
The notations $\Vert \cdot \Vert$ and~$\Vert \cdot \Vert_2$ are used to denote the Frobenius norm and the spectral norm of the matrix, respectively, as well as the Euclidean norm of the vector.
We let $\vone=[1,\cdots,1]^\top\in\BR^m$ and denote $\mI\in\BR^{m\times m}$ as the identity matrix.
We define aggregated variables for all agents as
\begin{align}\label{eq:def-aggregate-x}
\mX=\begin{bmatrix}
\vx_1 \\ \vdots \\\vx_m
\end{bmatrix}\in\BR^{m\times d},
\end{align}
where each $\vx_i\in\BR^{1\times d}$ are the local variable on the $i$-th agent.
We use the lower case with the bar to represent the mean vector, such that
\begin{align*}
\bx = \frac{1}{m}\sum_{i=1}^m \vx_i.    
\end{align*}
We also introduce the matrix for aggregated gradients of local functions $f_1,\dots,f_m$ as
\begin{align}\label{eq:def-aggregate-grad}
\nabla \mF(\mX)=\begin{bmatrix}
\nabla f_1(\vx_1) \\ \vdots \\ \nabla f_m(\vx_m)
\end{bmatrix}
\in\BR^{m\times d}.
\end{align}
For ease of presentation, we let the input of a function can be also organized as a row vector, such as $f(\bx)$, $f_i(\vx_i)$ and $\nabla f_i(\vx_i)$ for some $i\in[m]$.

\subsection{Problem Settings}\label{sec:settings}

We suppose the formulations (\ref{prob:main})--(\ref{func:local}) satisfy the following assumptions.

\begin{assumption}[lower bounded] \label{asm:lower-bounded}
We suppose the objective function $f:\BR^d\to\BR$ is lower bounded, i.e., we have
\begin{align}\label{eq:lower-value}
f^*\triangleq\inf_{\vx\in\BR^{d}}f(\vx)>-\infty.
\end{align}
\end{assumption}

\begin{assumption}[global smooth]\label{asm:smooth-global}
We suppose the differentiable function $f:\BR^d\to\BR$ is $L$-smooth for some $L>0$, i.e.,
\begin{align}\label{eq:smooth-global}
\Norm{\nabla f(\vx)-\nabla f(\vy)} \leq L\Norm{\vx-\vy} 
\end{align}
for all $\vx,\vy\in\BR^d$.  
\end{assumption} \vskip0.1cm

\begin{assumption}[global mean-squared smooth]\label{asm:smooth-mean-squared}
We suppose the individual functions $\{f_{i,j}\}_{i,j=1}^{m,n}$ are $\bL$-mean-squared smooth for some $\bL>0$, i.e., we have
\begin{align}\label{eq:smooth-mean-squared}
\frac{1}{mn}\sum_{i=1}^m\sum_{j=1}^n\Norm{\nabla f_{i,j}(\vx)-\nabla f_{i,j}(\vy)}^2 \leq \bL^2\Norm{\vx-\vy}^2
\end{align}
for all $\vx,\vy\in\BR^d$.  \vskip0.1cm
\end{assumption}

We present the relationship between the smoothness parameters $L$ and $\bL$ as follows.

\begin{proposition}\label{prop:smooth}
The smoothness conditions in Assumptions \ref{asm:smooth-global} and \ref{asm:smooth-mean-squared} have the following relationships:
\begin{enumerate}[itemsep=3pt,topsep=2pt]
    \item[(a)] If the individual functions $\{f_{i,j}\}_{i,j=1
    }^{m,n}$ are $\bL$-mean-squared smooth, then each of $\{f_{i,j}\}_{i,j=1
    }^{m,n}$  and $\{f_{i}\}_{i=1
    }^{m}$ is $ \sqrt{mn}\bL$-smooth, i.e., we have
    \begin{align}\label{eq:smooth-individual1}
        \Norm{\nabla f_{i,j}(\vx) - \nabla f_{i,j}(\vy)} \leq \sqrt{mn}\bL\Norm{\vx-\vy} 
    \end{align}    
    and
    \begin{align}\label{eq:smooth-individual2}    
        \Norm{\nabla f_{i}(\vx) - \nabla f_{i}(\vy)} \leq \sqrt{mn}\bL\Norm{\vx-\vy}
    \end{align}
    for all $\vx,\vy\in\BR^d$, $i\in[m]$, and $j\in[n]$.
    \item[(b)] If the individual functions $\{f_{i,j}\}_{i,j=1}^{m,n}$ are $\bar L$-mean-squared smooth, then the objective function~$f$ is $\bar L$-smooth, i.e., we have
    \begin{align*}
        \Norm{\nabla f(\vx) - \nabla f(\vy)} \leq \bL \Norm{\vx-\vy}    
    \end{align*}
    for all $\vx,\vy\in\BR^d$.
    \item[(c)] 
    For any $L>0$ and $\bL>0$ such that $\bL\geq L$,
    there exist functions $\{f_{i,j}\}_{i,j=1}^{m,n}$ which satisfy Assumption \ref{asm:smooth-global} and \ref{asm:smooth-mean-squared} with the tight smoothness parameters $L$ and $\bar L$, respectively.
\end{enumerate}
\end{proposition}

\begin{remark}
The statements (a) and (b) of Proposition \ref{prop:smooth} imply the upper bounds with respect to the tight global smoothness parameter $L$ in Assumption \ref{asm:smooth-global} is potential sharper than the upper bounds with respect to the  global mean-squared smoothness parameter $\bL$ in Assumption \ref{asm:smooth-mean-squared} (global mean-squared smooth). 
The statement (c) of Proposition \ref{prop:smooth} means the ratio between the tight parameters $\bL$ and $L$ can be arbitrary large, which means considering the difference between $\bL$ and $L$ is very necessary in finite-sum nonconvex optimization.   
\end{remark}

We also present other smoothness assumptions used in related work \cite{sun2020improving,xin2022fast,zhan2022efficient,li2022destress,metelev2024decentralized} for comparison.

\begin{assumption}[local smooth]\label{asm:smooth-local}
We suppose each local function $f_i:\BR^d\to\BR$ is $L_\ell$-smooth for some~$L_\ell>0$, i.e., we have
\begin{align}\label{eq:smooth-local}
\Norm{\nabla f_i(\vx)-\nabla f_i(\vy)} \leq L_\ell\Norm{\vx-\vy}
\end{align}
for all $i\in[m]$ and $\vx,\vy\in\BR^d$. 
\end{assumption}

\begin{assumption}[local mean-squared smooth]\label{asm:smooth-local-mean-squared}
We suppose the individual functions $\{f_{i,j}\}_{j=1}^n$ on each agent are~$\bL_\ell$-mean-squared smooth for some $\bL_\ell>0$, 
i.e., we have
\begin{align}\label{eq:smooth-mean-squared-local}
\frac{1}{n}\sum_{j=1}^n\Norm{\nabla f_{i,j}(\vx)-\nabla f_{i,j}(\vy)}^2 \leq \bL_\ell^2\Norm{\vx-\vy}^2
\end{align}
for all $i\in[m]$ and $\vx,\vy\in\BR^d$. 
\end{assumption}

\begin{assumption}[individual smooth]\label{asm:smooth-max}
We suppose each individual function $f_{i,j}$ is $\Lm$-smooth for some~$\Lm>0$, 
i.e., we have
\begin{align*}
\Norm{\nabla f_{i,j}(\vx)-\nabla f_{i,j}(\vy)} \leq \Lm\Norm{\vx-\vy}
\end{align*}
for all $i\in[m]$, $j\in[n]$ and $\vx,\vy\in\BR^d$. 
\end{assumption}

We present the relationship between the assumptions in related work (Assumptions~\ref{asm:smooth-local} and \ref{asm:smooth-local-mean-squared}) and ours (Assumptions \ref{asm:smooth-global} and \ref{asm:smooth-mean-squared}) in the following proposition.

\begin{proposition}\label{prop:smooth-2}
The smoothness conditions in Assumptions \ref{asm:smooth-global}--\ref{asm:smooth-local-mean-squared} holds that:
\begin{enumerate}[itemsep=3pt,topsep=2pt]
    \item[(a)] If each local function $f_i$ is $L_\ell$-smooth, then the objective function $f$ is $L_\ell$-smooth. 
    \item[(b)] If the individual functions $\{f_{i,j}\}_{j=1}^n$ on each agent are $\bL_\ell$-mean-squared smooth, then all of the individual functions $\{f_{i,j}\}_{i,j=1}^{m,n}$ are $\bar L_\ell$-mean-squared smooth.
    \item[(c)] For any $L$ and $L_\ell$ such that $L_\ell\geq L>0$, there exist functions $\{f_{i}\}_{i=1}^{m}$ which satisfy Assumption \ref{asm:smooth-global} and \ref{asm:smooth-local} with the tight smoothness parameters $L$ and $L_\ell$, respectively.
    \item[(d)] For any $\bL$ and $\bL_\ell$ such that $\bL_\ell\geq \bL>0$, there exist functions $\{f_{i,j}\}_{i=1,j=1}^{m,n}$ which satisfy Assumption \ref{asm:smooth-mean-squared} and \ref{asm:smooth-local-mean-squared} with the tight smoothness parameters $\bL$ and $\bL_\ell$, respectively.
\end{enumerate}
\end{proposition}
\begin{remark}\label{remark:smooth}
We consider the tight smoothness parameters $L$, $L_\ell$, $\bL$, $\bL_\ell$, and $\Lm$ that satisfy Assumptions \ref{asm:smooth-global}--\ref{asm:smooth-max}.
Then the statements (a) and (c) of Proposition \ref{prop:smooth-2} imply~$L_\ell\geq L$ and~$\bL_\ell\geq\bL$,
and the statements (c) and (d) of Proposition \ref{prop:smooth-2} imply the ratio $L_\ell/L$ and~$\bL_\ell/\bL$ can be arbitrary large. 
Following the proof of Proposition~\ref{prop:smooth-2} (Appendix~\ref{appendix:smooth-2}), we can also show that  $\Lm$ is no smaller than $L$, $L_\ell$, $\bL$, and $\bL_\ell$, and the ratio $\Lm/L$, $\Lm/L_\ell$, $\Lm/\bL$, and $\Lm/\bL_\ell$ can be arbitrary large.
\end{remark} \vskip0.05cm
\begin{remark}
It is worth noting that ratio between local (individual) and global smoothness parameters can be bounded if all individual functions are convex, which is different from our nonconvex setting.
Please see Appendix~\ref{appendix:convex-nonconvex} for detailed discussion.
\end{remark}

Besides the general nonconvex problem, we also study the objective under the additional PL condition \cite{lojasiewicz1963topological,polyak1963gradient} as follows.

\begin{assumption}\label{asm:PL}
We suppose the objective function $f:\BR^d\to\BR$ satisfies the PL condition with the parameter $\mu>0$, i.e., we have
\begin{align}\label{eq:PL-condition}
f(\vx) - \inf_{\vy\in\BR^{d}}f(\vy) \leq \frac{1}{2\mu}\Norm{\nabla f(\vx)}^2 
\end{align}
for all $\vx\in\BR^d$. 
\end{assumption}
Under the smoothness and PL conditions, we introduce  different types of condition numbers as follows
\begin{align}\label{def:kappa}
    \kappa \triangleq \frac{L}{\mu}, \qquad 
    \bar\kappa \triangleq \frac{\bL}{\mu}, \qquad 
    \kappa_\ell \triangleq \frac{L_\ell}{\mu}, \qquad 
    \bar\kappa_\ell \triangleq \frac{\bL_\ell}{\mu}, \qquad \text{and} \qquad 
    \kappa_{\max} \triangleq \frac{\Lm}{\mu}.
\end{align}
Following the discussion in Remark \ref{remark:smooth}, the tight condition numbers hold
\begin{align*}
\kappa_{\max} \geq \kappa_\ell \geq \kappa
\qquad\text{and}\qquad
\kappa_{\max} \geq \bar\kappa_\ell \geq \bar\kappa \geq \kappa.
\end{align*}

We characterize the behavior of one communication round on the connected network with $m$ agents by multiplying the mixing matrix $\mW\in\BR^{m\times m}$ on the aggregated variable.
We impose the following standard assumption for matrix $\mW$.

\begin{assumption}[\cite{schmidt2017minimizing}]\label{asm:W}
We suppose the mixing matrix $\mW=[w_{ij}]\in\BR^{m\times m}$ is symmetric and satisfies $w_{ij}\neq 0$ if the $i$-th agent and the $j$-th agent are connected or $i=j$. Furthermore, we suppose it holds that $\mW\vone=\mW^\top\vone=\vone$ and $\vzero \preceq \mW \preceq \mI$.
\end{assumption}

Under Assumption \ref{asm:W}, the largest eigenvalue of $\mW\in\BR^{m\times m}$ is 1 and we define the spectral gap of $\mW$ as
\begin{align*}
    \gamma = 1 - \lambda_2(\mW) \in (0,1],
\end{align*}
where $\lambda_2(\mW)$ is the second-largest eigenvalue of $\mW$.

We consider the black-box optimization procedure based on the local incremental first-order oracle and decentralized communication protocol as follows.

\begin{definition}[{\cite{hendrikx2021optimal,liu2024decentralized}}]\label{dfn:LIFO}
A local incremental first-order oracle (LIFO) algorithm over a network of $m$ agents satisfies the following constraints:
\begin{itemize}
\item \textbf{Local memory:} Each agent $i$ can store past values in a local memory $\mathcal{M}_{i,s}$ at time $s>0$. 
These values can be accessed and used at time $s$ by running the algorithm on agent $i$. Additionally, for all $i\in[m]$, we have 
\begin{align*}
\mathcal{M}_i^s\subset\mathcal{M}_{{\rm comp},i}^s\bigcup\mathcal{M}_{{\rm comm},i}^s,
\end{align*}
where $\mathcal{M}_{{\rm comp},i}^s$ and $\mathcal{M}_{{\rm comm},i}^s$ are the values come from the computation and communication respectively. 
\item \textbf{Local computation:} Each agent $i$ can access its local first-order oracle $\{f_{i,j^s}(\vx), \nabla f_{i,j^s}(\vx)\}$ for given $\vx\in\mathcal{M}^s_i$ and index $j^s\in[n]$ at time $s$. That is, for all $i\in[m]$, we have
\begin{align*}
\mathcal{M}_{{\rm comp},i}^s={\rm span}\big(\{\vx,\nabla f_i(\vx):\vx\in\mathcal{M}_i^{s-1}
\}\big),
\end{align*}
where ${\rm span}(\cdot)$ is the linear span.
\item \textbf{Local communication:} Each agent $i$ can share its value to all or part of its neighbors at time $s$. That is, for all $i\in[m]$, we have
\begin{align*}
\mathcal{M}_{{\rm comm},i}^{s}={\rm span}\Bigg(\bigcup_{j\in{\rm nbr}_i}\mathcal{M}_j^{s-\tau}\Bigg),
\end{align*}
where ${\rm nbr}_i$ is the set consists of the indices for the neighbors of agent $i$ and~$\tau<s$.
\item \textbf{Output value:} Each agent $i$ can specify one vector in its memory as local output of the algorithm at time $s$. That is, for all $i\in[m]$, we have
$\vx^s_i\in\mathcal{M}_i^s$.  
\end{itemize}
\end{definition} \vskip 0.1cm

For the general nonconvex setting, we desire to achieve an $\epsilon$-stationary point for every agent in expectation, i.e., output the local variables $\vx_1,\dots,\vx_m\in\BR^d$ such that 
$\BE\Norm{\nabla f(\vx_i)} \leq \epsilon$ for all $i\in[m]$, where $\vx_i$ comes from the memory of the $i$-th agent.
For the PL condition, we desire to achieve an $\epsilon$-suboptimal solution for every agent in expectation, i.e., $\BE[f(\vx_i)-f^*]\leq \epsilon$ for all $i\in[m]$.

\section{The Algorithm and Main Results}\label{sec:alg-main}

\begin{algorithm*}[t]
\caption{\DEAREST} \label{alg:DEAREST}
\begin{algorithmic}[1]
\STATE \textbf{Input:} initial parameter $\bx^0\in\BR^d$, stepsize $\eta>0$, probability $p\in(0,1]$, mini-batch size $b$, numbers of communication rounds $\hK$ and $ K$. \\[0.1cm]
\STATE $\mX^0=\vone \bx^0$,\quad $\mG^0=\nabla\mF(\vx^0)$ \label{line:x0-g0}\\[0.1cm]
\STATE $\mS^0=\FM(\mG^0, \hK)$ \label{line:s0-g0} \\[0.1cm]
\STATE \textbf{for} $t = 0, \dots, T-1$ \textbf{do}\\[0.1cm]
\STATE\quad $\zeta^t\sim {\rm Bernoulli}(p)$ \\[0.1cm]
\STATE\quad $\mX^{t+1} = \FM(\mX^t - \eta \mS^t, \mW, K)$ \label{line:update-x}\\[0.1cm]
\STATE\quad $[\xi^t_{1,1},\dots,\xi^t_{m,n}]^\top\sim {\rm Multinomial}\left(b,q\vone\right)$ ~~\text{with}~~ $q=1/(mn)$ \\[0.1cm]
\STATE\quad \textbf{parallel for} $i = 1, \dots, m$ \textbf{do}\\[0.1cm]
\STATE\quad\quad $\displaystyle{
\vg^{t+1}_i=\begin{cases}
\nabla f_i(\vx^{t+1}_i),~~~~~~~~~~~~~~~~~~~~~~~~~~~~~~~~~~~~~~~~~~~~\,\text{if~} \zeta^t=1, \\[0.1cm]
\displaystyle{\vg^t_i + \dfrac{1}{n}\sum_{j=1}^{n}\frac{\xi^t_{i,j}}{bq}\big(\nabla f_{i,j}(\vx^{t+1}_i) - \nabla f_{i,j}(\vx^t_i)\big)},~~~~~\text{otherwise}. 
\end{cases}
}$ \label{line:update-g} \\[0.1cm]
\STATE\quad\textbf{end parallel for} \\[0.1cm]
\STATE\quad $\mS^{t+1} = \FM(\mS^t+\mG^{t+1}-\mG^t, K)$ \label{line:tracking} \\[0.1cm]
\STATE\textbf{end for} \\[0.1cm]
\STATE \textbf{Output:} 
    $\displaystyle{\begin{cases}
     \vx^{\rm out}_i\sim {\rm Uniform}(\{\vx^0_i,\vx^1_i,\dots,\vx^{T-1}_i\}), & \text{general nonconvex}, \\[0.2cm]
     \vx^{\rm out}_i = \vx^T_i, & \text{PL condition}.
 \end{cases}}$  \\[0.1cm]
\end{algorithmic}
\end{algorithm*}

\begin{algorithm}[t]
	\caption{$\FM(\mY^{0},\mW, K)$} \label{alg:fm}
	\begin{algorithmic}[1]
		\STATE \textbf{Initialize:} $\mY^{-1}=\mY^{0}$, $\eta_y=(1-\sqrt{1-\lambda_2^2(\mW)}\,)/(1+\sqrt{1-\lambda_2^2(\mW)}\,)$. \\[0.02cm]
		\STATE \textbf{for} $k = 0, 1, \dots, K$ \textbf{do}\\[0.1cm]
		\STATE\quad $\mY^{k+1}=(1+\eta_y)\mW\mY^{k}-\eta_y\mY^{k-1}$ \\[0.1cm]
		\STATE\textbf{end for} \\[0.1cm]
		\STATE \textbf{Output:} $\mY^{K}$.
	\end{algorithmic}
\end{algorithm}

We propose DEcentralized probAbilistic Recursive  gradiEnt deScenT (\DEAREST) in Algorithm~\ref{alg:DEAREST}, which is based on variance reduction \cite{fang2018spider,pham2020proxsarah,wang2019spiderboost,li2021page,zhou2018stochastic,zhou2019lower}, gradient tracking \cite{nedic2017achieving,pu2021distributed,qu2019accelerated,li2019communication}, and the multi-consensus step by Chebyshev acceleration (Algorithm~\ref{alg:fm}) \cite{arioli2014chebyshev,liu2011accelerated}.
The random variables $\zeta^t$ and $[\xi^t_{1,1},\dots,\xi^t_{m,n}]^\top$ in our algorithm are shared by all agents, which can be implemented by using the random number generators with the same seed \cite{scaman2018optimal,liu2024decentralized,li2022variance}.

The main difference between \DEAREST~and existing decentralized stochastic nonconvex optimization methods \cite{sun2020improving,xin2022fast,li2022destress,metelev2024decentralized,lu2021optimal,yuan2022revisiting} is that we use the multinomial distribution to mimic the mini-batch sampling on single machine.
Specifically, the local gradient estimators in line \ref{line:update-g} of Algorithm \ref{alg:DEAREST} indicate
\begin{align}\label{eq:gradient-estimator}
\frac{1}{m}\sum_{i=1}^m \vg^{t+1}_i 
= \frac{1}{m}\sum_{i=1}^m \vg^t_i + \frac{1}{b}\sum_{i=1}^m\sum_{j=1}^{n}\xi^t_{i,j}\big(\nabla f_{i,j}(\vx^{t+1}_i) - \nabla f_{i,j}(\vx^t_i)\big)
\end{align}
in the case of $\zeta^t=0$. 
The multinomial distribution of $(\xi^t_{1,1},\dots,\xi^t_{m,n})$ means
the right-hand side of equation (\ref{eq:gradient-estimator}) has the identical distribution to
\begin{align} \label{eq:gradient-estimator-2}   
\frac{1}{m}\sum_{i=1}^m \vg^t_i+\frac{1}{b}\sum_{j=1}^b\big(\nabla f_{i_k^t,j_k^t}(\vx^{t+1}_i) - \nabla f_{i_k^t,j_k^t}(\vx^t_i)\big),
\end{align}
where each $(i_k^t,j_k^t)$ is uniformly and independently distributed on the index set
\begin{align}\label{eq:index-set}
\{(i,j): i\in[m],\, j\in[n]\}.    
\end{align}
Furthermore, the steps of gradient tracking (line \ref{line:tracking} of Algorithm \ref{alg:DEAREST}) and multi-consensus (Algorithm \ref{alg:fm}) encourage 
$\vg^{t+1}(\bx^{t+1})$, $\vg^t(\bx^t)$, $\vx^{t+1}_i$, and $\vx^{t}_i$ to approach $\bg^{t+1}$, $\bg^t$, $\bx^{t+1}$, and $\bx^t$ respectively, where 
$\bg^t=\frac{1}{m}\sum_{i=1}^m \vg^t_i$, $\bg^{t+1}=\frac{1}{m}\sum_{i=1}^m \vg^{t+1}_i$,
$\bx^t=\frac{1}{m}\sum_{i=1}^m \vx^t_i$ and $\bx^{t+1}=\frac{1}{m}\sum_{i=1}^m \vx^{t+1}_i$.
Therefore, equation (\ref{eq:gradient-estimator}) can be regarded as an approximation of
\begin{align}\label{eq:gradient-estimator-3} 
\bg^{t+1}\
= \bg^t + \frac{1}{b}\sum_{j=1}^b\big(\nabla f_{i_k^t,j_k^t}(\bx^{t+1}) - \nabla f_{i_k^t,j_k^t}(\bx^t)\big),
\end{align}
which follows the form of the well-known stochastic gradient recursive estimator for the finite-sum problem with $mn$ individual functions on single machine \cite{fang2018spider,pham2020proxsarah,wang2019spiderboost,li2021page,zhou2018stochastic,zhou2019lower}.
Additionally, our sampling strategy does not fix the mini-batch size on each agent, which allows the algorithm obtain the LIFO complexity bounds with respect to the global mean-squared smoothness parameter $\bL$ defined in Assumption \ref{asm:smooth-local-mean-squared}.

Compared with existing stochastic recursive gradient methods \cite{fang2018spider,pham2020proxsarah,wang2019spiderboost,li2021page,zhou2018stochastic,zhou2019lower,sun2020improving,li2022destress,zhan2022efficient,xin2022fast,metelev2024decentralized}, \DEAREST~iterates with the larger mini-batch size and the higher probability to access the exact gradient by considering the difference between the global smoothness parameter $L$ and the mean-squared smoothness parameter $\bL$. That is, we take
\begin{align}\label{eq:bp}
    b=\left\lceil\frac{\sqrt{mn}\bL}{L}\,\right\rceil
    \qquad \text{and} \qquad p=\min\left\{\frac{\bL}{\sqrt{mn}L},1\right\}
\end{align}
for Algorithm \ref{alg:DEAREST}.
As a comparison, the stochastic ProbAbilistic Gradient Estimator (PAGE) method \cite{li2021page} set $b=\Theta(\sqrt{mn}\,)$ and $p=\Theta(1/\sqrt{mn}\,)$.
Intuitively, the larger batch-size and the higher probability for exact gradient computation are potential to reduce the communication rounds of the stochastic distributed algorithms but possibly increase the computational cost.
Fortunately, our analysis show that the setting of equation~(\ref{eq:bp}) achieves the near-optimal complexity bounds for communication and computation with respect to both $L$ and $\bL$.

\begin{table}[t]
\caption{We present the communication rounds and the computation rounds for finding $\epsilon$-stationary points in the general nonconvex setting, where $\Delta=f(\bx^0)-f^*$ and $\vx^0\in\BR^d$ is the initial point of the algorithm.
$^\dag$The design of \GTPAGE~focuses on the time-varying network \cite{metelev2024decentralized}. 
For the static network in our setting, the communication rounds of \GTPAGE~can looks can be improved to $\fO(L_\ell\Delta/(\sqrt{\gamma}\epsilon^2))$ by introducing the steps of Chebyshev acceleration \cite{arioli2014chebyshev}.
}\label{table:nc-1}
\begin{tabular}{cccc}
    \hline Methods &  \#Communication & \#Computation & Reference \\\hline\hline\addlinespace

    \DGET 
    & $\fO\bigg(\dfrac{\sqrt{n}\Lm\Delta}{\gamma^2\epsilon^2}\bigg)$ 
    & $\fO\bigg(n + \dfrac{\sqrt{n}\Lm\Delta}{\gamma^2\epsilon^2}\bigg)$ 
    &  \citet{sun2020improving} \\
    \addlinespace
    
    \GTSARAH 
    & $\fO\bigg(\dfrac{\bL_\ell\Delta}{\gamma^2\epsilon^2}\bigg)$ 
    & $\fO\bigg(n+\bigg(\dfrac{n}{\gamma^2}+\dfrac{m^{1/3}}{n^{1/3}\gamma}+\sqrt{\dfrac{n}{m}}\,\bigg)\dfrac{\bL_\ell\Delta}{\epsilon^2}\bigg)$  
    & \citet{xin2022fast} \\ 
    \addlinespace

    \EDSGD
    & {$\fO\bigg(\dfrac{\Lm\Delta}{\gamma\epsilon^2}\bigg)$} 
    & $\fO\bigg(n + \dfrac{\sqrt{n/m}\Lm\Delta}{\epsilon^2}\bigg)$ 
    & \citet{zhan2022efficient} \\
    \addlinespace

    \DESTRESS  
    & {$\fO\bigg(\sqrt{mn}+\dfrac{\Lm\Delta}{\sqrt{\gamma}\epsilon^2}\bigg)$} 
    & $\fO\bigg(n + \dfrac{\sqrt{n/m}\Lm\Delta}{\epsilon^2}\bigg)$ 
    & \citet{li2022destress} \\
    \addlinespace
    
    $^\dag$\GTPAGE~
    & {$\fO\bigg(\dfrac{L_\ell\Delta}{\gamma\epsilon^2}\bigg)$}
    & $\fO\bigg(n + \dfrac{\sqrt{n}\bL_\ell\Delta}{\epsilon^2}\bigg)$  
    & \!\citet{metelev2024decentralized}\!\! \\
    \addlinespace    
    
    \DEAREST 
    & {$\tilde\fO\bigg(\dfrac{L\Delta}{\sqrt{\gamma}\epsilon^2}\bigg)$}
    & $\tilde\fO\bigg( n+\dfrac{(L+\min\{nL,\sqrt{{n}/{m}}  \bar L\})\Delta}{\epsilon^2}\bigg)$ 
    &  Corollary \ref{cor:complexity-general} \\
    \addlinespace 
    
    \hline \addlinespace
    
    \!\!\!Lower Bound\!\!\! 
    & {$\Omega\bigg(\dfrac{L\Delta}{\sqrt{\gamma}\epsilon^2}\bigg)$}
    & $\Omega\bigg( n+\dfrac{(L+\min\{nL,\sqrt{{n}/{m}}\bar L\})\Delta}{\epsilon^2}\bigg)$ 
    & \!Theorems \ref{thm:lower-communication} and \ref{thm:lower-computation}\!  \\
    \addlinespace \hline
\end{tabular} 
\end{table} 

\begin{table}[t]
\caption{We present the number of LIFO calls for finding $\epsilon$-stationary points in the general nonconvex setting, where $\Delta= f(\bx^0)-f^*$ and $\vx^0\in\BR^d$ is the initial point of the algorithm.}\label{table:nc-2}\vskip-0.1cm
\begin{tabular}{ccc}
    \hline
    Methods &  \#LIFO & Reference
    \\\hline\hline\addlinespace
    
    \DGET 
    & $\fO\bigg(mn + \dfrac{m\sqrt{n}\Lm\Delta}{\gamma^2\epsilon^2}\bigg)$ 
    & \citet{sun2020improving} \\
    \addlinespace
    
    \GTSARAH 
    & $\fO\bigg(mn+\bigg(\dfrac{n}{\gamma^2}+\dfrac{m^{1/3}n^{2/3}}{\gamma}+\sqrt{mn}\bigg)\dfrac{\bL_\ell\Delta}{\epsilon^2}\bigg)$ 
    & \citet{xin2022fast} \\
    \addlinespace

    \EDSGD  
    & $\fO\bigg(mn + \dfrac{\sqrt{mn}\,\Lm\Delta}{\epsilon^2}\bigg)$ 
    & \citet{zhan2022efficient} \\
    \addlinespace

    \DESTRESS 
    & $\fO\bigg(mn + \dfrac{\sqrt{mn}\,\Lm\Delta}{\epsilon^2}\bigg)$ 
    & \citet{li2022destress} \\
    \addlinespace
    
    \GTPAGE 
    & $\fO\bigg(mn + \dfrac{m\sqrt{n}\,\bL_\ell\Delta}{\epsilon^2}\bigg)$ 
    & \citet{metelev2024decentralized} \\
    \addlinespace  
    
    \DEAREST 
    & $\fO\bigg(mn + \dfrac{\min\{mnL, \sqrt{mn}\bL\}\Delta}{\epsilon^2}\bigg)$ 
    & Corollary \ref{cor:complexity-general}  \\    
    \addlinespace 
    
    \hline \addlinespace
    
    Lower Bound 
    & $\Omega\bigg(mn + \dfrac{\min\{mnL, \sqrt{mn}\bL\}\Delta}{\epsilon^2}\bigg)$ 
    & Theorem \ref{thm:LFO} \\
    \addlinespace \hline
\end{tabular} 
\end{table}

We formally describe our main theoretical result in Theorem \ref{thm:main}, and the detailed proofs for results in this section are provided in Section \ref{sec:complexity-dearest}.

\begin{theorem}\label{thm:main}
Under Assumptions \ref{asm:lower-bounded}--\ref{asm:smooth-mean-squared}, and \ref{asm:W} with $L\leq\bar L$, we run Algorithm~\ref{alg:DEAREST} with
\begin{align*}
    & \eta = \frac{1}{8L}, \qquad b=\left\lceil\frac{\bL\sqrt{mn}}{L}\right\rceil,
    \qquad {p=\min\left\{\frac{\bL}{\sqrt{mn}L},1\right\}},  \qquad   T = \left\lceil \frac{33L\Delta}{\epsilon^2} \right\rceil, \\
    &~\quad K= \fO\left(\frac{\ln(mn\bL/L)}{\sqrt{\gamma}}\right) \qquad\text{and}\qquad
    \hK = \fO\left(\frac{\ln(mn\bL/(L\epsilon))}{\sqrt{\gamma}}\right),
\end{align*}
where $\Delta=f(\bx^0) - f^*$. Then the output satisfies
$\BE\Norm{\nabla f(\vx^{\rm out}_i)} \leq \epsilon$ for~all~$i\in[m]$.
\end{theorem} \vskip0.1cm

Based on Theorem \ref{thm:main}, we can directly obtain the 
the communication rounds and the LIFO complexity to achieve the desired approximate stationary points.
However, the setting of  $[\xi^t_{1,1},\dots,\xi^t_{m,n}]^\top\sim {\rm Multinomial}\left(b,q\vone\right)$ means the agents may access different numbers of LIFO in each iteration of \DEAREST.
Therefore, the computation rounds and the LIFO complexity should be analyzed separately.
We can bound the expectation of the computation rounds by using the concentration inequality \cite{motwani1995randomized,liu2024decentralized}.
Finally, we achieve three kinds of upper complexity bounds for \DEAREST~as follows.

\begin{corollary}\label{cor:complexity-general}
Under the assumptions and settings of Theorem \ref{thm:main}, Algorithm~\ref{alg:DEAREST} can find an expected $\epsilon$-stationary point at every agent with the communication rounds of 
\begin{align*}
     \tilde\fO\left(\frac{L\Delta}{\sqrt{\gamma}\epsilon^{2}}\right),
\end{align*}
the expected LIFO complexity of
\begin{align}\label{eq:complexity-lifo}
\fO\left(mn+ \frac{\min\{mnL,\,\sqrt{mn}\bL\}\Delta}{\epsilon^2}\right),
\end{align}
and the expected computation rounds of
\begin{align}\label{eq:complexity-compuation-rounds}
\tilde\fO\left(n+\frac{(L+\min\{nL, \sqrt{n/m}  \bar L\})\Delta}{\epsilon^2}\right).
\end{align}
\end{corollary}

We compare our results with related work in Table \ref{table:nc-1} and \ref{table:nc-2}, where we use the notation $\tilde\fO(\cdot)$ to hide the logarithmic term with respect to $L$, $\bL$, $m$, and $n$. 
Proposition~\ref{prop:smooth-2} indicates all of our  complexity bounds are sharper than the ones in existing work. \vskip0.1cm

\begin{remark}\label{remark:not=proportion}
    In the case of $\bL/L<\sqrt{m/n}$, the expected mini-batch size $b=\lceil{\bL\sqrt{mn}}/{L}\rceil$ is smaller than the number of agents $m$, which leads to  several agents may not perform the LIFO calls in a iteration. 
    The relation between $\bL$ and $L$ in this case also implies the term $L$ before $\min\{nL, \sqrt{{n}/{m}} \bar L\}$ in equation (\ref{eq:complexity-compuation-rounds}) cannot be omitted.  
    Hence, the upper bounds on the computation rounds shown in equation (\ref{eq:complexity-compuation-rounds}) is not always proportion to the LIFO complexity shown in equation (\ref{eq:complexity-lifo}). 
\end{remark}

\begin{remark}
The very recent proposed \GTPAGE~\cite{metelev2024decentralized} considers the time-varying network and takes the local mini-batch size $b_\ell=\Theta(\sqrt{n}\bL_\ell/L_\ell)$ on every agent (corresponds to mini-batch size $\Theta(m\sqrt{n}\bL_\ell/L_\ell)$ in total), which ignores the naturally existing global smoothness parameters in the problem.
In contrast, the mini-batch size $b=\lceil{\bL\sqrt{mn}}/{L}\rceil$ used in \DEAREST~is with respect to all agents, which fully leverages the global properties of the objective function to achieve the sharper dependency on smoothness parameters and the number of agents~(see Tables \ref{table:nc-1} and \ref{table:nc-2}).  
\end{remark} \vskip0.05cm

\begin{remark}\label{remark:IFO-tradeoff}
    In the case of $m=1$ (single machine scenario), Corollary \ref{cor:complexity-general} indicates the IFO complexity of $\fO(n+ \min\{nL,\,\sqrt{n}\bL\}\Delta\epsilon^{-2})$.
    Recall that the IFO complexity of vanilla gradient descent \cite{nesterov2018lectures} and stochastic recursive gradient methods \cite{nguyen2017sarah,fang2018spider,pham2019proxsarah,li2021page,carmon2020first} are~$\fO(nL\Delta\epsilon^{-2})$ and $\fO(n+\sqrt{n}\bL\Delta\epsilon^{-2})$, respectively.
    Hence, vanilla gradient descent has the sharper IFO complexity bound than stochastic recursive gradient methods in the case of~$\bL>\sqrt{n}L$.     
    Furthermore, Proposition \ref{prop:smooth}(c) implies the complexity of stochastic recursive gradient methods can be arbitrary more expensive than vanilla gradient descent.
    In other words, the optimality of the stochastic recursive gradient methods \cite{fang2018spider,zhou2019lower} no longer holds if we distinguish between the global smoothness parameter $L$ and the mean-square smoothness parameter $\bL$.
\end{remark}

\section{The Complexity Analysis for DEAREST$^+$}\label{sec:complexity-dearest} 

We start the complexity analysis of \DEAREST~(Algorithm \ref{alg:DEAREST}) by introduce the following quantities:

\begin{itemize}[itemsep=0.08cm]
\item the global gradient estimation error 
\begin{align}\label{eq:def-U}
U^{t} \triangleq \Norm{\frac{1}{m}\sum_{i=1}^m\left(\vg^{t}_i-\nabla f_i(\vx^{t}_i)\right)}^2,    
\end{align}
\item the local gradient estimation error 
\begin{align}\label{eq:def-V}
V^{t} \triangleq \frac{1}{m}\Norm{\mG^{t}-\nabla \mF(\mX^{t})}^2=\frac{1}{m}\sum_{i=1}^m\Norm{\vg^t_i-\nabla f(\vx^t_i)},  
\end{align}
\item the consensus error 
\begin{align}\label{eq:def-C}
C^{t} \triangleq \Norm{\mX^{t} - \vone\bx^{t}}^2 + \eta^2\Norm{\mS^{t} - \vone\bs^{t}}^2.    
\end{align}
\end{itemize}
We then define the Lyapunov function
\begin{align}\label{eq:Lyapunov}
\Phi^t \triangleq  f(\bx^{t}) - f^* + \frac{2\eta}{p}U^{t} + \frac{\eta}{m^3n^3bp} V^{t} + \frac{132m^2n^3\bar L^2\eta}{p} C^{t}.
\end{align}
The remains of this section is organized as follows.
In Section \ref{sec:basic-lemma}, we provide some basic lemmas.
In Section \ref{sec:recursion}, we establish the recursion for the Lyapunov function.
In Sections \ref{sec:thm:main} and \ref{sec:cor:complexity-general}, we formally prove Theorem \ref{thm:main} and Corollary \ref{cor:complexity-general}, respectively.

\subsection{Some Basic Lemmas}\label{sec:basic-lemma}

We provide some basic lemma for the later analysis of Algorithm \ref{alg:DEAREST}.

\begin{lemma}[{\cite[Lemma 3]{ye2023multi}}]\label{lem:avg-norm}
For any $\mZ\in\BR^{m\times d}$, we have
\begin{align}\label{eq:avg-norm}
\Norm{\mZ - \vone\bz} \leq \Norm{\mZ}
\qquad\text{where}\quad\bz=\frac{1}{m}\vone^\top\mZ.
\end{align}
\end{lemma}

\begin{lemma}\label{lem:stst12}
For Algorithm \ref{alg:DEAREST}, we have
\begin{align}\label{eq:st-gt}
    \bs^t=\bg^t.
\end{align}
\end{lemma}
\begin{proof}
We use induction to prove this lemma by following the analysis in decentralized convex optimization {\cite[Lemma 2]{ye2023multi}}. 
The steps in lines \ref{line:x0-g0} and \ref{line:s0-g0} of Algorithm~\ref{alg:DEAREST} and Proposition \ref{prop:FM} means $\bs^0=\bg^0$.
Suppose that we have~$\bs^t=\bg^t$, then it holds
\begin{align*}
\bs^{t+1} 
& \overset{\hphantom{(\ref{eq:chebyshev-average})}}{=}\frac{1}{m}\vone^\top\FM(\mS^t+\mG^{t+1}-\mG^t, K) \\
& \overset{(\ref{eq:chebyshev-average})}{=} \bs^t+\bg^{t+1}-\bg^t = \bg^{t+1},
\end{align*}
where the first equality is based on line \ref{line:tracking} of Algorithm \ref{alg:DEAREST}; the second equality is based on Proposition \ref{prop:FM}; the third equality is based on inductive hypothesis $\bs^t=\bg^t$.
\end{proof}

The multi-consensus steps of Algorithm \ref{alg:fm} holds the following proposition.

\begin{proposition}[{\cite[Proposition 1]{ye2023multi}}]\label{prop:FM}
Under Assumption \ref{asm:W}, Algorithm~\ref{alg:fm} holds that
\begin{align}\label{eq:chebyshev-average}
\frac{1}{m}\vone^\top\mY^{(K)} = \by^{0} 
\end{align}
and
\begin{align}\label{eq:chebyshev-converge}
\big\|\mY^{(K)}-\vone\by^{0}\big\| \leq  c_1\big(1-c_2\sqrt{\gamma}\,\big)^K \big\|\mY^{0}-\vone\by^{0}\big\|,
\end{align}
where $\by^{0} = \frac{1}{m}\vone^\top\mY^{0}$, $c_1=\sqrt{14}$, and $c_2=1-1/\sqrt{2}$.
\end{proposition} \vskip0.1cm

We describe the decease of function value as follows.

\begin{lemma}[Lemma 2 of \citet{li2021page}\footnote{\citet{li2021page} assume the individual functions are mean-squared smooth. In fact, all steps in the proof of this lemma still hold even if we only impose the global smoothness assumption (Assumption~\ref{asm:smooth-global}).}]\label{lem:descent-Li}
Under Assumption \ref{asm:smooth-global}, the update
\begin{align*}
    \vx^+ = \vx - \eta \vs
\end{align*}
for all $\vx,\vs\in\BR^d$ and $\eta>0$ holds that
\begin{align}\label{eq:descent-Li}
\begin{split}
f(\vx^+)
\leq f(\vx) - \frac{\eta}{2}\Norm{\nabla f(\vx)}^2 - \left(\frac{1}{2\eta}-\frac{L}{2}\right)\Norm{\vx^+ -\vx}^2 + \frac{\eta}{2}\Norm{\vs-\nabla f(\vx)}^2.
\end{split}
\end{align}
\end{lemma}

\begin{lemma}\label{lem:decrease-f}
Algorithm~\ref{alg:DEAREST} holds that
\begin{align}\label{ieq:decrease-f}
f(\bx^{t+1})
\leq f(\bx^t)  - \frac{\eta}{2}\Norm{\nabla f(\bx^t)}^2  + \eta U^t +  n\bL^2\eta   C^t - \left(\frac{1}{2\eta}-\frac{L}{2}\right)\Norm{\bx^{t+1}-\bx^t}^2.
\end{align}
\end{lemma}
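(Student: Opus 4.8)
The plan is to treat the averaged iterate $\bx_t$ as performing an inexact gradient descent step, since (\ref{update:avg}) together with Lemma~\ref{lem:stst12} gives the clean relation $\bx_{t+1}-\bx_t=-\eta\bg_t$. Because $f=\frac1m\sum_i f_i$ is an average of $L$-smooth functions (this is exactly what Assumption~\ref{asm:smooth} and Lemma~\ref{lem:smooth-b} encode at the per-agent level), $f$ itself is $L$-smooth, so I would begin from the standard descent inequality
\begin{align*}
f(\bx_{t+1}) \leq f(\bx_t) + \inner{\nabla f(\bx_t)}{\bx_{t+1}-\bx_t} + \frac{L}{2}\Norm{\bx_{t+1}-\bx_t}^2,
\end{align*}
and substitute $\bx_{t+1}-\bx_t=-\eta\bg_t$ to turn the linear term into $-\eta\inner{\nabla f(\bx_t)}{\bg_t}$.

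The next step is to apply the identity $-\inner{a}{b}=\frac12\Norm{a-b}^2-\frac12\Norm{a}^2-\frac12\Norm{b}^2$ with $a=\nabla f(\bx_t)$ and $b=\bg_t$. This produces the desired term $-\frac{\eta}{2}\Norm{\nabla f(\bx_t)}^2$, together with $\frac{\eta}{2}\Norm{\nabla f(\bx_t)-\bg_t}^2$ and $-\frac{\eta}{2}\Norm{\bg_t}^2$. Since $\Norm{\bg_t}^2=\eta^{-2}\Norm{\bx_{t+1}-\bx_t}^2$, this last term combines with the quadratic $\frac{L}{2}\Norm{\bx_{t+1}-\bx_t}^2$ from the descent inequality to produce exactly the coefficient $-\big(\frac{1}{2\eta}-\frac{L}{2}\big)$ appearing in the statement. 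At this point the only remaining quantity to control is $\frac{\eta}{2}\Norm{\nabla f(\bx_t)-\bg_t}^2$, which I must bound by $\eta U_t+\frac{L^2\eta}{m}C_t$; equivalently I need $\Norm{\nabla f(\bx_t)-\bg_t}^2 \leq 2U_t+\frac{2L^2}{m}C_t$.

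To establish this bound I would write $\nabla f(\bx_t)-\bg_t=\frac1m\sum_{i=1}^m(\nabla f_i(\bx_t)-\vg_t(i))$ and insert the intermediate gradients $\nabla f_i(\vx_t(i))$, splitting the sum into a consensus part $\frac1m\sum_i(\nabla f_i(\bx_t)-\nabla f_i(\vx_t(i)))$ and the global estimation part $\frac1m\sum_i(\nabla f_i(\vx_t(i))-\vg_t(i))$, whose squared norm is precisely $U_t$. Applying $\Norm{a+b}^2\le 2\Norm{a}^2+2\Norm{b}^2$ peels off the $2U_t$ term; for the consensus part I would use convexity of $\Norm{\cdot}^2$ to pass the average inside, then per-agent $L$-smoothness to bound it by $\frac{L^2}{m}\sum_i\Norm{\bx_t-\vx_t(i)}^2=\frac{L^2}{m}\Norm{\vx_t-\vone\bx_t}^2$, which is at most $\frac{L^2}{m}C_t$ directly from the definition of $C_t$. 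I do not anticipate a genuine obstacle, since the argument is a chain of standard inequalities; the one place that requires care is the constant bookkeeping, so that the factor of $2$ from the Young-type step and the discarded $\eta^2\Norm{\vs_t-\vone\bs_t}^2$ contribution to $C_t$ align exactly with the stated coefficients rather than merely up to absolute constants.
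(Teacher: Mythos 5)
Your proposal is correct and follows essentially the same route as the paper: the paper obtains your intermediate bound $f(\bx_{t+1})\leq f(\bx_t)-\frac{\eta}{2}\Norm{\nabla f(\bx_t)}^2-(\frac{1}{2\eta}-\frac{L}{2})\Norm{\bx_{t+1}-\bx_t}^2+\frac{\eta}{2}\Norm{\bs_t-\nabla f(\bx_t)}^2$ by citing Lemma~2 of Li et al.\ together with \eqref{update:avg}, whereas you rederive it from the descent lemma and the polarization identity, which is exactly the content of that cited lemma. The remaining step --- splitting $\bg_t-\nabla f(\bx_t)$ into the global estimation part ($U_t$) and the consensus part (bounded via Jensen and $L$-smoothness by $\frac{L^2}{m}\Norm{\vx_t-\vone\bx_t}^2\leq\frac{L^2}{m}C_t$) with the factor $2$ from Young's inequality --- is identical to the paper's \eqref{ieq:error-grad}.
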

\begin{proof}
According to Proposition \ref{prop:FM} and line \ref{line:tracking} of Algorithm \ref{alg:DEAREST}, we have
\begin{align}\label{eq-update:avg}
\begin{split}    
\bx^{t+1} &\overset{\hphantom{(\ref{eq:chebyshev-average})}}{=}
 \frac{1}{m}\vone^\top\FM(\mX^t-\eta\mS^t,K) \\
& \overset{(\ref{eq:chebyshev-average})}{=}  \frac{1}{m}\vone^\top(\mX^t-\eta\mS^t) \\
 & \overset{\hphantom{(\ref{eq:chebyshev-average})}}{=}
 \bx^t - \eta \bs^t \\
& \overset{(\ref{eq:st-gt})}{=}  \bx^t - \eta \bg^t,
\end{split}
\end{align}
where the last step is based on Lemma \ref{lem:stst12}.

According to Lemma \ref{lem:descent-Li} with $\vx=\bx^t$, $\vx^+=\bx^{t+1}$, and $\vs=\bs^t$, we have
\begin{align}\label{ieq:decrease-f1}
\begin{split}
f(\bx^{t+1})
\overset{(\ref{eq:descent-Li}),\,(\ref{eq-update:avg})}{\leq} &  f(\bx^t) - \frac{\eta}{2}\Norm{\nabla f(\bx^t)}^2 \\
& - \left(\frac{1}{2\eta}-\frac{L}{2}\right)\Norm{\bx^{t+1}-\bx^t}^2 + \frac{\eta}{2}\Norm{\bs^t-\nabla f(\bx^t)}^2.
\end{split}
\end{align}
We bound the last term of equation (\ref{ieq:decrease-f1}) as follows
\begin{align}\label{ieq:error-grad}
\begin{split}
&  \Norm{\bs^t-\nabla f(\bx^t)}^2 \\
\overset{(\ref{eq:st-gt})}{=}& \Norm{\frac{1}{m}\sum_{i=1}^m \big(\vg^t_i-\nabla f_i(\bx^t)\big)}^2 \\
\leq\,\, &  2\Norm{\frac{1}{m}\sum_{i=1}^m\big(\vg^t_i-\nabla f_i(\vx^t_i)\big)}^2 +  2\Norm{\frac{1}{m}\sum_{i=1}^m\big(\nabla f_i(\vx^t_i)-\nabla f_i(\bx^t)\big)}^2 \\
\leq\,\, & 2\Norm{\frac{1}{m}\sum_{i=1}^m\big(\vg^t_i-\nabla f_i(\vx^t_i)\big)}^2 +  \frac{2}{m}\sum_{i=1}^m\Norm{\nabla f_i(\vx^t_i)-\nabla f_i(\bx^t)}^2 \\
\overset{(\ref{eq:smooth-individual2})}{\leq}\, & 2\Norm{\frac{1}{m}\sum_{i=1}^m\big(\vg^t_i-\nabla f_i(\vx^t_i)\big)}^2 +  \frac{2mn\bL^2}{m}\sum_{i=1}^m\Norm{\vx^t_i-\bx^t}^2 \\
\overset{(\ref{eq:def-aggregate-x})}{=}\, & 2\Norm{\frac{1}{m}\sum_{i=1}^m\big(\vg^t_i-\nabla f_i(\vx^t_i)\big)}^2 +  2n\bL^2\Norm{\mX^t-\vone\bx^t}^2 \\
=\,\, & 2U^t +  2n\bL^2C^t,
\end{split}
\end{align}
where the equality is based on Lemma \ref{lem:stst12}; the first inequality uses Young's inequality; the second inequality uses the fact $\Norm{\frac{1}{m}\sum_{i=1}^m \va_i}^2 \leq \frac{1}{m} \sum_{i=1}^m \Norm{\va_i}^2$ for all $a_1,\dots,a_m\in\BR^d$; the third inequality is based on Proposition \ref{prop:smooth}(a) which implies each $f_i$ is $\sqrt{mn}\bL$-smooth; the last step is based on the definitions of $U^t$ and $C^t$ as equations (\ref{eq:def-U}) and (\ref{eq:def-C}).

We finish the proof by combining the results of (\ref{ieq:decrease-f1}) and~(\ref{ieq:error-grad}).
\end{proof}

\subsection{The Recursion for the Lyapunov Function}\label{sec:recursion}

Following the notations of Algorithm \ref{alg:DEAREST} and Proposition \ref{prop:FM},
we define 
\begin{align}\label{eq:def-rho}
\rho\triangleq c_1\big(1-c_2\sqrt{\gamma}\,\big)^{K}<1
\qquad\text{and}\qquad
\hrho \triangleq c_1\big(1-c_2\sqrt{\gamma}\,\big)^{\hK}<1
\end{align}
to characterize the convergence of Algorithm~\ref{alg:fm}, where the inequalities $\rho<1$ and $\hat\rho<1$ can be guaranteed by the settings of $K$ and $\hat K$ in Theorem~\ref{thm:main} and we present their detailed expressions in the proof of Corollary \ref{cor:complexity-general} (Section \ref{sec:thm:main}).

We then provide recursions for quantities $C^t$, $U^t$, and $V^t$ in following lemmas.

\begin{lemma}\label{lem:consensus}
Under the setting of Theorem~\ref{thm:main}, we have
\begin{align*}
    \BE[C^{t+1}]
\leq & 2\rho^2(27m^3n^3\bL^2\eta^2 + 2) \BE[C^t]  
 + 4\rho^2pm\eta^2\BE[V^t]  \\
 & + 18\rho^2m^4n^3\bL^2\eta^2\BE\Norm{\bx^{t+1}-\bx^t}^2. 
\end{align*}
\end{lemma}
\begin{proof}
The update of $\vx^{t+1}$ (line \ref{line:update-x} of Algorithm \ref{alg:DEAREST}) means
\begin{align}\label{eq:x-bx}
\begin{split}    
   & \Norm{\mX^{t+1} - \vone\bx^{t+1}} \\
\quad\,=\,\quad & \Norm{\FM(\mX^t - \eta\mS^t,K^t) - \frac{1}{m}\vone\vone^\top\FM(\mX^t - \eta\mS^t,K^t)} \\
\overset{(\ref{eq:chebyshev-converge}),\,(\ref{eq:def-rho})}{\leq}  & \rho \Norm{(\mX^t - \eta\mS^t)  - \frac{1}{m}\vone\vone^\top\left(\mX^t - \eta\mS^t\right)} \\
\quad\,=\,\quad  & \rho \Norm{\mX^t - \eta\mS^t  - \vone(\bx^t - \eta\bs^t)} \\
\quad\,\leq\,\quad  & \rho\big(\Norm{\mX^t - \vone\bx^t} + \eta\Norm{\mS^{t} - \vone\bs^{t}}\big) 
\end{split}
\end{align}
where the first inequality is based on Proposition~\ref{prop:FM} and the definition of $\rho$;  the last step is based on triangle inequality.

Consequently, we applying Young's inequality and equation (\ref{eq:x-bx}) to obtain
\begin{align}\label{recursive:x}
\begin{split}
  & \Norm{\mX^{t+1} - \vone\bx^{t+1}}^2 \\
\overset{(\ref{eq:x-bx})}{\leq} & 2\rho^2\Norm{\mX^t - \vone\bx^t}^2 + 2\rho^2\eta^2\Norm{\mS^{t} - \vone\bs^{t}}^2 \\
\overset{(\ref{eq:def-C})}{=} & 2\rho^2C^t 
\overset{(\ref{eq:def-rho})}{\leq} 2C^t.
\end{split}
\end{align}
The update of $\vg^{t+1}$ (line \ref{line:update-g} of Algorithm \ref{alg:DEAREST}) means
\begin{align}\label{ieq:rho_g}
\begin{split}
  & \BE\Norm{\vg^{t+1}_i-\vg^t_i}^2 \\
= & p\BE\Norm{\nabla f_i(\vx^{t+1}_i)-\vg^t_i}^2 \\
 & + (1-p) 
\BE\Norm{\frac{1}{n}\sum_{j=1}^{n}\dfrac{\xi_{i,j}^t}{bq}\big(\nabla f_{i,j}(\vx^{t+1}_i) - \nabla f_{i,j}(\vx^t_i)\big)}^2.
\end{split}
\end{align}
For the first term on the right-hand side of equation (\ref{ieq:rho_g}), we have 
\begin{align}\label{eq:gt-1-2}
\begin{split}    
  & \BE\Norm{\nabla f_i(\vx^{t+1}_i)-\vg^t_i}^2 \\  
\leq\, & 2\BE\Norm{\nabla f_i(\vx^{t+1}_i)-\nabla f_i(\vx^t_i)}^2
 + 2\BE\Norm{\nabla f_i(\vx^t_i)-\vg^t_i}^2 \\
\overset{(\ref{eq:smooth-individual2})}{\leq} & 2mn\bL^2\BE\Norm{\vx^{t+1}_i-\vx^t_i}^2 + \BE\Norm{\nabla f_i(\vx^t_i)-\vg^t_i}^2,
\end{split}
\end{align}
where the first step is based on Young's inequality; the second step is based on Proposition \ref{prop:smooth}(a).

For the second term on the right-hand side of equation (\ref{ieq:rho_g}), we have 
\begin{align}\label{eq:gt-2-2}
\begin{split}
 & \BE\Norm{\frac{1}{n}\sum_{j=1}^{n}\dfrac{\xi_{i,j}^t}{bq}\big(\nabla f_{i,j}(\vx^{t+1}_i) - \nabla f_{i,j}(\vx^t_i)\big)}^2
 \\  
\leq\, & \frac{1}{n}\sum_{j=1}^{n}\BE\Norm{\dfrac{\xi_{i,j}^t}{bq}\big(\nabla f_{i,j}(\vx^{t+1}_i) - \nabla f_{i,j}(\vx^t_i)\big)}^2 \\
\leq\, & \frac{1}{n}\sum_{j=1}^{n}\frac{1}{q^2}\BE\Norm{\nabla f_{i,j}(\vx^{t+1}_i) - \nabla f_{i,j}(\vx^t_i)}^2 \\
\overset{(\ref{eq:smooth-individual1})}{\leq} & \frac{1}{n}\sum_{j=1}^{n}\frac{mn\bL^2}{q^2}\BE\Norm{\vx^{t+1}_i - \vx^t_i}^2 \\
=\, & m^3n^3\bL^2\BE\Norm{\vx^{t+1}_i - \vx^t_i}^2,
\end{split}
\end{align}
where the first inequality is based on the fact  $\|\frac{1}{n}\sum_{i=1}^n \va_i\|^2\leq \frac{1}{n}\sum_{i=1}^n \va_i^2$ for all $a_1,\dots,a_n$; 
the second inequality is based on fact $\xi_{i,j}^t\leq b$; the last inequality is based on Proposition \ref{prop:smooth}(a) and the setting $q=1$.

Combining equations (\ref{ieq:rho_g}), (\ref{eq:gt-1-2}), and (\ref{eq:gt-2-2}), we achieve
\begin{align}\label{ieq:rho_g2-2}
\begin{split}
  & \BE\Norm{\vg^{t+1}_i-\vg^t_i}^2 \\
\leq & p\big(2mn\bL^2\BE\Norm{\vx^{t+1}_i-\vx^t_i}^2 + 2\BE\Norm{\nabla f_i(\vx^t_i)-\vg^t_i}^2\big) \\
 & + (1-p) m^3n^3\bL^2\BE\Norm{\vx^{t+1}_i - \vx^t_i}^2 \\
\leq & 2p\BE\Norm{\nabla f_i(\vx^t_i)-\vg^t_i}^2
 + 3m^3n^3\bL^2\BE\Norm{\vx^{t+1}_i - \vx^t_i}^2.
\end{split}
\end{align}

Summing equation (\ref{ieq:rho_g2-2}) over $i=1,\dots,m$, we obtain
\begin{align}\label{inq:g^t-g^t+1-2}
\begin{split}
&   \BE\Norm{\mG^{t+1}-\mG^t}^2 \\
=\,\, & \sum_{i=1}^m\BE\Norm{\vg^{t+1}_i-\vg^t_i}^2 \\
\overset{(\ref{inq:g^t-g^t+1-2})}{\leq} & 2p\sum_{i=1}^m\BE\Norm{\nabla f_i(\vx^t_i)-\vg^t_i}^2
 +  3m^3n^3\bL^2\sum_{i=1}^m\BE\Norm{\vx^{t+1}_i - \vx^t_i}^2 \\
\overset{(\ref{eq:def-V})}{=} &  2pm\BE[V^t] + 3m^3n^3\bL^2\BE\Norm{\vx^{t+1}-\vx^t}^2 \\
\leq\,\, & 2pm\BE[V^t] 
 + 9m^3n^3\bL^2\BE\left[\Norm{\vx^{t+1}-\vone\bx^{t+1}}^2 + \Norm{\vone\bx^{t+1}-\vone\bx^t}^2+\Norm{\mX^t-\vone\bx^t}^2\right] \\
\overset{(\ref{eq:x-bx})}\leq &  2pm\BE[V^t] + 9m^3n^3\bL^2\BE\left[2\rho^2\big(\Norm{\mX^t - \vone\bx^t}^2 + \eta^2\Norm{\mS^{t} - \vone\bs^{t}}^2\big)\right] \\
 & + 9m^3n^3\bL^2\BE\left[m\Norm{\bx^{t+1}-\bx^t}^2 + \Norm{\mX^t-\vone\bx^t}^2\right] \\
\overset{(\ref{eq:def-rho})}{\leq} &  2pm\BE[V^t]
 + 27m^3n^3\bL^2\Norm{\mX^t - \vone\bx^t}^2 
 + 9m^3n^3\bL^2\eta^2\Norm{\mS^{t} - \vone\bs^{t}}^2 \\
 & + 9m^4n^3\bL^2\BE\Norm{\bx^{t+1}-\bx^t}^2,
\end{split}
\end{align}
where the second inequality is based on Young's inequality; the third inequality uses the result of (\ref{eq:x-bx}); the last inequality is based on the fact $\rho<1$.

We also have
\begin{align}\label{inq:s^t+1-2}
\begin{split}
   &\! \Norm{\mS^{t+1} - \vone\bs^{t+1}}^2 \\
\,\quad=\,\quad & \Big\|\FM(\mS^t + \mG^{t+1} - \mG^t, K) - \frac{1}{m}\vone\vone^\top\FM(\mS^t + \mG^{t+1} - \mG^t, K)\Big\|^2 \\
\overset{(\ref{eq:chebyshev-converge}),\,(\ref{eq:def-rho})}{\leq} & \rho^2\Big\|\mS^t + \mG^{t+1} - \mG^t - \frac{1}{m}\vone\vone^\top(\mS^t + \mG^{t+1} - \mG^t)\Big\|^2 \\
\,\quad=\,\quad & 2\rho^2\Norm{\mS^t - \vone\bs^t}^2 + 2\rho^2\Big\|\mG^{t+1} - \mG^t - \frac{1}{m}\vone\vone^\top(\mG^{t+1} - \mG^t)\Big\|^2 \\
\,~~\overset{(\ref{eq:avg-norm})}{\leq}~~\, & 2\rho^2\Norm{\mS^t - \vone\bs^t}^2 + 2\rho^2\Norm{\mG^{t+1} - \mG^t}^2,
\end{split}
\end{align}
where the first inequality is based on Proposition \ref{prop:FM} and the definition of $\rho$; the second inequality is based on Young's inequality and the last step is based on  Lemma~\ref{lem:avg-norm}. 

Combining equations (\ref{inq:g^t-g^t+1-2}) and (\ref{inq:s^t+1-2}), we have 
\begin{align}\label{recursive:s}
\begin{split}
  & \BE\Norm{\mS^{t+1} - \vone\bs^{t+1}}^2 \\
\overset{(\ref{inq:g^t-g^t+1-2}),\,(\ref{inq:s^t+1-2})}{\leq}\, & 2\rho^2(9m^3n^3\bL^2\eta^2+1)\BE\Norm{\mS^{t} - \vone\bs^{t}}^2  + 4\rho^2pm\BE[V^t] \\
 & + 54\rho^2m^3n^3\bL^2\BE\Norm{\mX^t - \vone\bx^t}^2 
 + 18\rho^2m^4n^3\bL^2\BE\Norm{\bx^{t+1}-\bx^t}^2.
\end{split}
\end{align}

Consequently, results of equations (\ref{recursive:x}) and (\ref{recursive:s}) indicate 
\begin{align*}
  &  \BE[C^{t+1}] \\
\,~~\overset{(\ref{eq:def-C})}{=}\,~~\! & \BE\big[\Norm{\mX^{t+1} - \vone\bx^{t+1}}^2 + \eta^2\Norm{\mS^{t+1} - \vone\bs^{t+1}}^2\big] \\
\overset{(\ref{recursive:x}),\,(\ref{recursive:s})}{\leq}\! &  2\rho^2\BE\Norm{\mX^t - \vone\bx^t}^2 + 2\rho^2\eta^2\BE\Norm{\mS^{t} - \vone\bs^{t}}^2 \\
& + 2\rho^2(9m^3n^3\bL^2\eta^2+1)\eta^2\BE\Norm{\mS^{t} - \vone\bs^{t}}^2  + 4\rho^2pm\eta^2\BE[V^t] \\
 & + 54\rho^2m^3n^3\bL^2\eta^2\BE\Norm{\mX^t - \vone\bx^t}^2 
 + 18\rho^2m^4n^3\bL^2\eta^2\BE\Norm{\bx^{t+1}-\bx^t}^2 \\
\quad=\quad\! & 2\rho^2(27m^3n^3\bL^2\eta^2 + 1) \BE\Norm{\mX^t - \vone\bx^t}^2 + 2\rho^2(9m^3n^3\bL^2\eta^2+2)\eta^2\BE\Norm{\mS^{t} - \vone\bs^{t}}^2   \\
 & + 4\rho^2pm\eta^2\BE[V^t] + 18\rho^2m^4n^3\bL^2\eta^2\BE\Norm{\bx^{t+1}-\bx^t}^2 \\
\quad=\quad\! & 2\rho^2(27m^3n^3\bL^2\eta^2 + 2) \BE[C^t]  
 + 4\rho^2pm\eta^2\BE[V^t] + 18\rho^2m^4n^3\bL^2\eta^2\BE\Norm{\bx^{t+1}-\bx^t}^2\!\!, 
\end{align*}
which finishes the proof.
\end{proof}

\begin{lemma}\label{lem:global}
Under the setting of Theorem~\ref{thm:main}, we have 
\begin{align*}
  \BE[U^{t+1}] 
\leq (1-p)\BE[U^t] + 9(1-p)m^2n^3\bar L^2\BE[C^t] + \frac{3(1-p)\bL^2}{b}\BE\Norm{\bx^{t+1}-\bx^t}^2.
\end{align*}
\end{lemma}
\begin{proof}
The setting $[\xi^t_{1,1},\dots,\xi^t_{m,n}]^\top\sim {\rm Multinomial}\left(b,q\vone\right)$ with $q=1/(mn)$ implies that we have
\begin{align*}
    \xi^t_{i,j}\sim{\rm Binomial}(b,q),
\end{align*} 
for all $i\in[m]$ and $j\in[n]$. This leads to
\begin{align}\label{eq:U-expecation0}
\begin{split}    
    & \BE_{\xi_{i,j}^t}\left[\dfrac{1}{mn}\sum_{i=1}^{m}\sum_{j=1}^{n}\dfrac{\xi^t_{i,j}}{bq}\big(\nabla f_{i,j}(\vx^{t+1}_i) - \nabla f_{i,j}(\vx^t_i)\big)\right]  \\
    = & \dfrac{1}{mn}\sum_{i=1}^{m}\sum_{j=1}^{n}\dfrac{\BE\left[\xi^t_{i,j}\right]}{bq}\big(\nabla f_{i,j}(\vx^{t+1}_i) - \nabla f_{i,j}(\vx^t_i)\big)  \\ 
    = & \frac{1}{mn}\sum_{i=1}^{m}\sum_{i=1}^n\big(\nabla f_{i,j}(\vx^{t+1}_i) - \nabla f_{i,j}(\vx^t_i\big) \\
    = & \frac{1}{m}\sum_{i=1}^{m}\big(\nabla f_{i}(\vx^{t+1}_i) - \nabla f_{i}(\vx^t_i)\big),
\end{split}
\end{align}
where we use the fact $\BE[\xi_{i,j}^t]=bq$ for all $i\in[m]$ and $j\in[n]$.

Then the update of $\vg^{t+1}_i$ (line \ref{line:update-g} of Algorithm \ref{alg:DEAREST}) means
\begin{align}\label{eq:U-initial}
\small\begin{split}    
 & \BE[U^{t+1}] \\
~=~& p\BE\Norm{\frac{1}{m}\sum_{i=1}^m(\nabla f_i(\vx^{t+1}_i-\nabla f_i(\vx^{t+1}_i))}^2 \\
& + (1-p)\BE\Norm{\frac{1}{m}\sum_{i=1}^m\!\left(\!\vg^t_i\!+\!\dfrac{1}{n}\sum_{j=1}^{n}\frac{\xi^t_{i,j}}{bq}\big(\nabla f_{i,j}(\vx^{t+1}_i)\!-\!\nabla f_{i,j}(\vx^t_i)\big)\!-\!\nabla f_i(\vx^{t+1}_i)\!\right)\!}^2 \\
\overset{(\ref{eq:U-expecation0})}{=} & (1-p)\BE\Norm{\frac{1}{m}\sum_{i=1}^m\left(\vg^t_i - \nabla f_i(\vx^t_i)\right)}^2 \\
& + (1-p)\BE\Bigg\|\,\dfrac{1}{mn}\sum_{i=1}^m\sum_{j=1}^{n}\dfrac{\xi^t_{i,j}}{bq}\big(\nabla f_{i,j}(\vx^{t+1}_i) - \nabla f_{i,j}(\vx^t_i)\big) - \frac{1}{m}\sum_{i=1}^m\big(\nabla f_i(\vx^{t+1}_i)-\nabla f_i(\vx^t_i)\big)\,\Bigg\|^2 \\
\overset{(\ref{eq:def-U})}{=} & (1-p)\BE[U^t] \\
& + (1-p)\BE\Bigg\|\,\dfrac{1}{mn}\sum_{i=1}^m\sum_{j=1}^{n}\dfrac{\xi^t_{i,j}}{bq}\big(\nabla f_{i,j}(\vx^{t+1}_i) - \nabla f_{i,j}(\vx^t_i)\big) 
 - \frac{1}{m}\sum_{i=1}^m\big(\nabla f_i(\vx^{t+1}_i)-\nabla f_i(\vx^t_i)\big)\,\Bigg\|^2, 
\end{split}
\end{align}
where the second step is due to the property of Martingale~\cite[Proposition~1]{fang2018spider} and equation (\ref{eq:U-expecation0}); the last step is based on the definition of $U^t$.

We organize the terms in the second norm term in equation (\ref{eq:U-initial}) as follows
\begin{align*}   
& \dfrac{1}{mn}\sum_{i=1}^m\sum_{j=1}^{n}\dfrac{\xi^t_{i,j}}{bq}\big(\nabla f_{i,j}(\vx^{t+1}_i) - \nabla f_{i,j}(\vx^t_i)\big) \\
 & - \frac{1}{mn}\sum_{i=1}^m\sum_{j=1}^n\big(\nabla f_{i,j}(\vx^{t+1}_i)-\nabla f_{i,j}(\vx^t_i)\big) \\
= &  \dfrac{1}{mn}\sum_{i=1}^m\sum_{j=1}^{n}\dfrac{\xi^t_{i,j}}{bq}\big(\nabla f_{i,j}(\vx^{t+1}_i) - \nabla f_{i,j}(\bx^{t+1}) - (\nabla f_{i,j}(\vx^t_i) - \nabla f_{i,j}(\bx^t)\big) \\ 
& + \dfrac{1}{mn}\sum_{i=1}^m\sum_{j=1}^{n}\dfrac{\xi^t_{i,j}}{bq}\big(\nabla f_{i,j}(\bx^{t+1}) - \nabla f_{i,j}(\bx^t)\big) \\
& - \dfrac{1}{mn}\sum_{i=1}^m\sum_{j=1}^{n}\big(\nabla f_{i,j}(\vx^{t+1}_i) - \nabla f_{i,j}(\bx^{t+1}) - (\nabla f_{i,j}(\vx^t_i) - \nabla f_{i,j}(\bx^t))\big) \\  
& - \dfrac{1}{mn}\sum_{i=1}^m\sum_{j=1}^{n}\big(\nabla f_{i,j}(\bx^{t+1}) - \nabla f_{i,j}(\bx^t))\\
= &  \dfrac{1}{mn}\sum_{i=1}^m\sum_{j=1}^{n}\left(\frac{\xi^t_{i,j}}{bq}-1\right)\big(\nabla f_{i,j}(\vx^{t+1}_i) - \nabla f_{i,j}(\bx^{t+1})\big) \\
&  - \dfrac{1}{mn}\sum_{i=1}^m\sum_{j=1}^{n}\left(\frac{\xi^t_{i,j}}{bq}-1\right)\big(\nabla f_{i,j}(\vx^t_i) - \nabla f_{i,j}(\bx^t)\big) \\
& + \dfrac{1}{mn}\sum_{i=1}^m\sum_{j=1}^{n}\dfrac{\xi^t_{i,j}}{bq}\big(\nabla f_{i,j}(\bx^{t+1}) - \nabla f_{i,j}(\bx^t)\big) \\
& - \dfrac{1}{mn}\sum_{i=1}^m\sum_{j=1}^{n}\big(\nabla f_{i,j}(\bx^{t+1}) - \nabla f_{i,j}(\bx^t)).
\end{align*}
Taking the square of norm on above equation, we achieve
\begin{align}\label{eq:U2}
\small\begin{split} 
& \Norm{\dfrac{1}{mn}\sum_{i=1}^m\sum_{j=1}^{n}\dfrac{\xi^t_{i,j}}{bq}\big(\nabla f_{i,j}(\vx^{t+1}_i) - \nabla f_{i,j}(\vx^t_i)\big) - \frac{1}{mn}\sum_{i=1}^m\sum_{j=1}^n\big(\nabla f_{i,j}(\vx^{t+1}_i)-\nabla f_{i,j}(\vx^t_i)\big)}^2 \\
\leq &  3\Norm{\dfrac{1}{mn}\sum_{i=1}^m\sum_{j=1}^{n}\left(\frac{\xi^t_{i,j}}{bq}-1\right)\big(\nabla f_{i,j}(\vx^{t+1}_i) - \nabla f_{i,j}(\bx^{t+1})\big)}^2 \\
& + 3\Norm{\dfrac{1}{mn}\sum_{i=1}^m\sum_{j=1}^{n}\left(\frac{\xi^t_{i,j}}{bq}-1\right)\big(\nabla f_{i,j}(\vx^t_i) - \nabla f_{i,j}(\bx^t)\big)}^2 \\
& + 3\Norm{\dfrac{1}{mn}\sum_{i=1}^m\sum_{j=1}^{n}\dfrac{\xi^t_{i,j}}{bq}\big(\nabla f_{i,j}(\bx^{t+1}) - \nabla f_{i,j}(\bx^t)\big) - \dfrac{1}{mn}\sum_{i=1}^m\sum_{j=1}^{n}\big(\nabla f_{i,j}(\bx^{t+1}) - \nabla f_{i,j}(\bx^t))}^2 \\
\leq &  \dfrac{3}{mn}\sum_{i=1}^m\sum_{j=1}^{n}\left(\frac{\xi^t_{i,j}}{bq}-1\right)^2\Norm{\nabla f_{i,j}(\vx^{t+1}_i) - \nabla f_{i,j}(\bx^{t+1})}^2 \\
& + \dfrac{3}{mn}\sum_{i=1}^m\sum_{j=1}^{n}\left(\frac{\xi^t_{i,j}}{bq}-1\right)^2\Norm{\nabla f_{i,j}(\vx^t_i) - \nabla f_{i,j}(\bx^t)}^2 \\
& + 3\Norm{\dfrac{1}{mn}\sum_{i=1}^m\sum_{j=1}^{n}\dfrac{\xi^t_{i,j}}{bq}\big(\nabla f_{i,j}(\bx^{t+1}) - \nabla f_{i,j}(\bx^t)\big) - \dfrac{1}{mn}\sum_{i=1}^m\sum_{j=1}^{n}\big(\nabla f_{i,j}(\bx^{t+1}) - \nabla f_{i,j}(\bx^t))}^2,
\end{split}
\end{align}
where the first step is based on Young's inequality; the last step 
is based on the fact $\|\frac{1}{mn}\sum_{i=1}^m\sum_{j=1}^n \va_{i,j}\|^2 \leq \frac{1}{mn}\sum_{i=1}^m \sum_{j=1}^n \Norm{\va_{i,j}}^2$ for all $a_{1,1},\dots,a_{m,n}\in\BR^d$.

\newpage
We bound the terms in the last step in equation (\ref{eq:U2}) as follows:
\begin{itemize}
\item  For the first term, we have
\begin{align}\label{eq:U21}
\begin{split}    
  &  \dfrac{3}{mn}\sum_{i=1}^m\sum_{j=1}^{n}\left(\frac{\xi^t_{i,j}}{bq}-1\right)^2\Norm{\nabla f_{i,j}(\vx^{t+1}_i) - \nabla f_{i,j}(\bx^{t+1})}^2 \\
\overset{(\ref{eq:xi-bq-1})}{\leq} &  \dfrac{3}{mn}\sum_{i=1}^m\sum_{j=1}^{n}\frac{1}{q^2}\Norm{\nabla f_{i,j}(\vx^{t+1}_i) - \nabla f_{i,j}(\bx^{t+1})}^2  \\
\overset{\,(\ref{eq:smooth-individual1})\,}{\leq} &   \dfrac{3}{mn}\sum_{i=1}^m \sum_{j=1}^n  \frac{mn\bar L^2}{q^2}\Norm{\vx^{t+1}_i - \bx^{t+1}}^2 \\
\,=\, &   \frac{3n\bar L^2}{q^2}\Norm{\mX^{t+1} - \vone\bx^{t+1}}^2,
\end{split}
\end{align}
where the first inequality is based on the fact $\xi_{i,j}^t\in\{0,1,\dots,b\}$ that leads to
\begin{align}\label{eq:xi-bq-1}
    \left|\frac{\xi_{i,j}^t}{bq}-1\right|
\leq \max\left\{\left|\frac{b}{bq}-1\right|, 1 \right\}
\leq \left|\frac{1}{q}-1\right|+ 1 = \frac{1}{q};
\end{align}
the second inequality is based on Proposition \ref{prop:smooth}(a).
\item For the second term, we follow the derivation of equation (\ref{eq:U21}) to achieve
\begin{align}\label{eq:U22}
\begin{split}
  \dfrac{3}{mn}\sum_{i=1}^m\sum_{j=1}^{n}\left(\frac{\xi^t_{i,j}}{bq}-1\right)^2\Norm{\nabla f_{i,j}(\vx^t_i) - \nabla f_{i,j}(\bx^t)}^2 
\leq \frac{3n\bar L^2}{q^2}\Norm{\mX^t - \vone\bx^t}^2.
\end{split}
\end{align}
\item For the third term, 
the setting $[\xi^t_{1,1},\dots,\xi^t_{m,n}]^\top\sim {\rm Multinomial}\left(b,q\vone\right)$ with parameter $q=1/(mn)$ implies 
implies we have 
\begin{align*}
    \xi^t_{i,j}\sim{\rm Binomial}(b,q),
\end{align*} 
for all $i\in[m]$ and $j\in[n]$.
This leads to
\begin{align}\label{eq:U-expecation}
\begin{split}    
    & \BE_{\xi^t_{i,j}}\left[\dfrac{1}{mn}\sum_{i=1}^{m}\sum_{j=1}^{n}\dfrac{\xi^t_{i,j}}{bq}\big(\nabla f_{i,j}(\bx^{t+1}) - \nabla f_{i,j}(\bx^t)\big)\right]  \\
    = & \dfrac{1}{mn}\sum_{i=1}^{m}\sum_{j=1}^{n}\dfrac{\BE\left[\xi^t_{i,j}\right]}{bq}\big(\nabla f_{i,j}(\bx^{t+1}) - \nabla f_{i,j}(\bx^t)\big)  \\    
    = & \frac{1}{mn}\sum_{i=1}^{m}\sum_{i=1}^n\big(\nabla f_{i,j}(\bx^{t+1}) - \nabla f_{i,j}(\bx^t)\big),
\end{split}
\end{align}
where we use the fact $\BE[\xi_{i,j}^t]=bq$ for all $i\in[m]$ and $j\in[n]$.

The setting $[\xi^t_{1,1},\dots,\xi^t_{m,n}]^\top\sim {\rm Multinomial}\left(b,q\vone\right)$ with parameter $q=1/(mn)$ also implies the random vector
\begin{align}\label{eq:U-termA}
\begin{split}    
  &  \dfrac{1}{mn}\sum_{i=1}^{m}\sum_{j=1}^{n}\dfrac{\xi^t_{i,j}}{bq}\big(\nabla f_{i,j}(\bx^{t+1}) - \nabla f_{i,j}(\bx^t)\big) \\
= & \frac{1}{b}\sum_{i=1}^{m}\sum_{j=1}^{n}\xi^t_{i,j}\big(\nabla f_{i,j}(\bx^{t+1}) - \nabla f_{i,j}(\bx^t)\big)    
\end{split}
\end{align}
has the identical distribution to the random vector
\begin{align}\label{eq:U-termB}
\frac{1}{b}\sum_{k=1}^{b}\big(\nabla f_{i_k^t,j_k^t}(\bx^{t+1}) - \nabla f_{i_k^t,j_k^t}(\bx^t)\big),
\end{align}
where each pair $(i_k^t,j_k^t)$ is independently and uniformly sampled from the set
\begin{align*}
\big\{(i,j):i\in[m], j\in[n]\big\}.    
\end{align*}
In the view of equations (\ref{eq:U-expecation}), (\ref{eq:U-termA}), and (\ref{eq:U-termB}), we bound the variance of equation~(\ref{eq:U-termA}) as follows
\begin{align}\label{eq:U23}
\small\begin{split}    
  &  \BE_{\xi^t_{i,j}}\Norm{\dfrac{1}{mn}\sum_{i=1}^m\sum_{j=1}^{n}\dfrac{\xi^t_{i,j}}{bq}\big(\nabla f_{i,j}(\bx^{t+1}) - \nabla f_{i,j}(\bx^t)\big) - \dfrac{1}{mn}\sum_{i=1}^m\sum_{j=1}^{n}\big(\nabla f_{i,j}(\bx^{t+1}) - \nabla f_{i,j}(\bx^t))}^2 \\
&=   \BE_{i_k, j_k}\Norm{\frac{1}{b}\sum_{k=1}^{b}\big(\nabla f_{i_k,j_k}(\bx^{t+1}) - \nabla f_{i_k,j_k}(\bx^t)\big) - \dfrac{1}{mn}\sum_{i=1}^m\sum_{j=1}^{n}\big(\nabla f_{i,j}(\bx^{t+1}) - \nabla f_{i,j}(\bx^t))}^2  \\
&=   \frac{1}{b}\BE_{i_k, j_k}\Norm{\nabla f_{i_k,j_k}(\bx^{t+1}) - \nabla f_{i_k,j_k}(\bx^t) - \BE[\nabla f_{i_k,j_k}(\bx^{t+1}) - \nabla f_{i_k,j_k}(\bx^t)]}^2 \\
&\leq  \frac{1}{b}\BE_{i_k, j_k}\Norm{\nabla f_{i_k,j_k}(\bx^{t+1}) - \nabla f_{i_k,j_k}(\bx^t)}^2 \\
&\leq \frac{\bL^2}{b}\Norm{\bx^{t+1}-\bx^t}^2,
\end{split}
\end{align}
where the first equality is based on the distributions of equation (\ref{eq:U-termA}) and (\ref{eq:U-termB}) are identical; 
the second equality is based on the definition of $(i_k,j_k)$;
the first inequality is based on equations (\ref{eq:U-expecation}) and (\ref{eq:U-termA}); the second inequality is based on   Assumption \ref{asm:smooth-mean-squared}.
\end{itemize}

Combining the equations (\ref{eq:U2}), (\ref{eq:U21}), (\ref{eq:U22}), and (\ref{eq:U23}), we have
\begin{align}\label{eq:U2-final} 
\small\begin{split}    
& \BE\Norm{\dfrac{1}{mn}\sum_{i=1}^m\sum_{j=1}^{n}\dfrac{\xi^t_{i,j}}{bq}\big(\nabla f_{i,j}(\vx^{t+1}_i) - \nabla f_{i,j}(\vx^t_i)\big) - \frac{1}{mn}\sum_{i=1}^m\sum_{j=1}^n\big(\nabla f_{i,j}(\vx^{t+1}_i)-\nabla f_{i,j}(\vx^t_i)\big)}^2 \\
\leq &  \BE\left[\frac{3n\bar L^2}{q^2}\Norm{\mX^{t+1} - \vone\bx^{t+1}}^2 + \frac{3n\bar L^2}{q^2}\Norm{\mX^t - \vone\bx^t}^2 + \frac{3\bL^2}{b}\Norm{\bx^{t+1}-\bx^t}^2\right] \\
\leq &  \BE\left[\frac{6\rho^2n\bar L^2}{q^2}\big(\Norm{\mX^t - \vone\bx^t}^2 + \eta^2\Norm{\mS^{t} - \vone\bs^{t}}^2\big) + \frac{3n\bar L^2}{q^2}\Norm{\mX^t - \vone\bx^t}^2 + \frac{3\bL^2}{b}\Norm{\bx^{t+1}-\bx^t}^2\right] \\
\leq &  \BE\left[\frac{9n\bar L^2}{q^2}C^t + \frac{3\bL^2}{b}\Norm{\bx^{t+1}-\bx^t}^2\right],
\end{split}
\end{align}
where the second inequality is based on equation (\ref{recursive:x});
the third inequality is based on the Young's inequality;
the last inequality is based on the definition of $C^t$ and the fact~$\rho\leq 1$.

Combining equations (\ref{eq:U-initial}) and (\ref{eq:U2-final}),
we have
\begin{align*}
  \BE[U^{t+1}] 
\leq & (1-p)\BE[U^t] + \frac{9(1-p)n\bar L^2}{q^2}\BE[C^t] + \frac{3(1-p)\bL^2}{b}\BE\Norm{\bx^{t+1}-\bx^t}^2 \\
\leq & (1-p)\BE[U^t] + 9(1-p)m^2n^3\bar L^2\BE[C^t] + \frac{3(1-p)\bL^2}{b}\BE\Norm{\bx^{t+1}-\bx^t}^2,
\end{align*}
where the last step is based on the setting $q=1/(mn)$.
Therefore, we finish the proof.
\end{proof}

\begin{lemma}\label{lem:local}
Under the setting of Theorem~\ref{thm:main}, we have
\begin{align*}
\BE[V^{t+1}] 
\leq (1-p)\BE[V^t] + 36(1-p)m^2n^3\bL^2\BE[C^t] + 9(1-p)m^3n^3\bL^2\BE\Norm{\bx^{t+1}-\bx^t}^2,
\end{align*}
\end{lemma}
\begin{proof}
The setting of~$[\xi^t_{1,1},\dots,\xi^t_{m,n}]^\top\sim {\rm Multinomial}\left(b,q\vone\right)$ with $q=1/(mn)$ 
implies   
\begin{align*}
    \xi^t_{i,j}\sim{\rm Binomial}(b,q),
\end{align*} 
for all $i\in[m]$ and $j\in[n]$.
This leads to 
\begin{align}\label{eq:V-unbiased}
\begin{split}    
    & \BE_{\xi^t_{i,j}}\left[\dfrac{1}{n}\sum_{j=1}^{n}\dfrac{\xi^t_{i,j}}{bq}\big(\nabla f_{i,j}(\vx^{t+1}_i) - \nabla f_{i,j}(\vx^t_i)\big)\right]  \\
    = & \dfrac{1}{n}\sum_{j=1}^{n}\dfrac{\BE\left[\xi^t_{i,j}\right]}{bq}\big(\nabla f_{i,j}(\vx^{t+1}_i) - \nabla f_{i,j}(\vx^t_i)\big)  \\ 
    = & \frac{1}{n}\sum_{i=1}^n\big(\nabla f_{i,j}(\vx^{t+1}_i) - \nabla f_{i,j}(\vx^t_i)\big) 
    =  \nabla f_{i}(\vx^{t+1}_i) - \nabla f_{i}(\vx^t_i).
\end{split}
\end{align}
The update of $\vg^{t+1}_i$ (line \ref{line:update-g} of Algorithm \ref{alg:DEAREST}) means 
\begin{align}\label{eq:V-initial}
\begin{split}
& \BE\Norm{\vg^{t+1}_i-\nabla f_i(\vx^{t+1}_i)}^2 \\
\,\,=\,\, & p\BE\Norm{\nabla f_i(\vx^{t+1}_i)-\nabla f_i(\vx^{t+1}_i)}^2 \\
& + (1-p)\BE\Norm{\vg^t_i + \dfrac{1}{n}\sum_{j=1}^{n}\frac{\xi^t_{i,j}}{bq}\big(\nabla f_{i,j}(\vx^{t+1}_i) - \nabla f_{i,j}(\vx^t_i)\big) -\nabla f_i(\vx^{t+1}_i)}^2 \\
\overset{(\ref{eq:V-unbiased})} = & (1-p)\BE\Norm{\vg^t_i - \nabla f_i(\vx^t_i)}^2 \\
& + (1-p)\BE\Norm{\dfrac{1}{n}\sum_{j=1}^{n}\frac{\xi^t_{i,j}}{bq}\big(\nabla f_{i,j}(\vx^{t+1}_i) - \nabla f_{i,j}(\vx^t_i)\big) - \big(\nabla f_i(\vx^{t+1}_i) - \nabla f_i(\vx^t_i)\big)}^2, 
\end{split}
\end{align}
where the second equality uses the property of Martingale~\cite[Proposition~1]{fang2018spider} and equation (\ref{eq:V-unbiased}).

Similar to the derivation of equation (\ref{eq:U2}) in the proof of Lemma \ref{lem:global} , we can bound the last term in equation (\ref{eq:V-initial}) as 
\begin{align}\label{eq:V2}
\small\begin{split} 
& \Norm{\dfrac{1}{n}\sum_{j=1}^{n}\dfrac{\xi^t_{i,j}}{bq}\big(\nabla f_{i,j}(\vx^{t+1}_i) - \nabla f_{i,j}(\vx^t_i)\big) - \frac{1}{n}\sum_{j=1}^n\big(\nabla f_{i,j}(\vx^{t+1}_i)-\nabla f_{i,j}(\vx^t_i)\big)}^2 \\
\leq &  \dfrac{3}{n}\sum_{j=1}^{n}\left(\frac{\xi^t_{i,j}}{bq}-1\right)^2\Norm{\nabla f_{i,j}(\vx^{t+1}_i) - \nabla f_{i,j}(\bx^{t+1})}^2 \\
& + \dfrac{3}{n}\sum_{j=1}^{n}\left(\frac{\xi^t_{i,j}}{bq}-1\right)^2\Norm{\nabla f_{i,j}(\vx^t_i) - \nabla f_{i,j}(\bx^t)}^2 \\
& + 3\Norm{\dfrac{1}{n}\sum_{j=1}^{n}\dfrac{\xi^t_{i,j}}{bq}\big(\nabla f_{i,j}(\bx^{t+1}) - \nabla f_{i,j}(\bx^t)\big) - \dfrac{1}{n}\sum_{j=1}^{n}\big(\nabla f_{i,j}(\bx^{t+1}) - \nabla f_{i,j}(\bx^t))}^2.
\end{split}
\end{align}
Following the derivation of equations (\ref{eq:U21})-(\ref{eq:U22}) in the proof of Lemma \ref{lem:global}, we can bound the first two terms on the right-hand side of equation (\ref{eq:V2}) as 
\begin{align}\label{eq:V21}
\small\begin{split}    
  &  \dfrac{3}{n}\sum_{j=1}^{n}\left(\frac{\xi^t_{i,j}}{bq}-1\right)^2\Norm{\nabla f_{i,j}(\vx^{t+1}_i) - \nabla f_{i,j}(\bx^{t+1})}^2 \\
\overset{(\ref{eq:xi-bq-1})}{\leq} &  \dfrac{3}{n}\sum_{j=1}^{n}\frac{1}{q^2}\Norm{\nabla f_{i,j}(\vx^{t+1}_i) - \nabla f_{i,j}(\bx^{t+1})}^2  \\
\,\overset{(\ref{eq:smooth-individual1})}{\leq}\, &   \dfrac{3}{n} \sum_{j=1}^n  \frac{mn\bar L^2}{q^2}\Norm{\vx^{t+1}_i - \bx^{t+1}}^2 =   \frac{3mn\bar L^2}{q^2} \Norm{\vx^{t+1}_i - \bx^{t+1}}^2,
\end{split}
\end{align}
and 
\begin{align}\label{eq:V22}
\begin{split}    
  \dfrac{3}{n}\sum_{j=1}^{n}\left(\frac{\xi^t_{i,j}}{bq}-1\right)^2\Norm{\nabla f_{i,j}(\vx^t_i) - \nabla f_{i,j}(\bx^t)}^2 
\leq  \frac{3mn\bar L^2}{q^2} \Norm{\vx^t_i - \bx^t}^2.
\end{split}
\end{align}
For the third term on the right-hand side of equation (\ref{eq:V2}), we have
\begin{align}\label{eq:V23}
\begin{split} 
& 3\Norm{\dfrac{1}{n}\sum_{j=1}^{n}\dfrac{\xi^t_{i,j}}{bq}\big(\nabla f_{i,j}(\vx^{t+1}_i) - \nabla f_{i,j}(\vx^t_i)\big) - \frac{1}{n}\sum_{j=1}^n\big(\nabla f_{i,j}(\vx^{t+1}_i)-\nabla f_{i,j}(\vx^t_i)\big)}^2 \\
= & 3\Norm{\dfrac{1}{n}\sum_{j=1}^{n}\left(\dfrac{\xi^t_{i,j}}{bq}-1\right)\big(\nabla f_{i,j}(\vx^{t+1}_i) - \nabla f_{i,j}(\vx^t_i)\big)}^2 \\
\leq & \dfrac{3}{n}\sum_{j=1}^{n}\left(\dfrac{\xi^t_{i,j}}{bq}-1\right)^2\Norm{\nabla f_{i,j}(\vx^{t+1}_i) - \nabla f_{i,j}(\vx^t_i)}^2 \\
\overset{(\ref{eq:xi-bq-1})}{\leq} & \dfrac{3}{n}\sum_{j=1}^{n}\frac{1}{q^2}\Norm{\nabla f_{i,j}(\vx^{t+1}_i) - \nabla f_{i,j}(\vx^t_i)}^2 \\
\overset{(\ref{eq:smooth-individual1})}{\leq} & \dfrac{3}{n}\sum_{j=1}^{n}\frac{mn\bL^2}{q^2}\Norm{\vx^{t+1}_i - \vx^t_i}^2 
= \dfrac{3mn\bL^2}{q^2}\Norm{\vx^{t+1}_i - \vx^t_i}^2 \\
\leq & \dfrac{9mn\bL^2}{q^2}\left(\Norm{\vx^{t+1}_i - \bx^{t+1}}^2 + \Norm{\bx^{t+1} - \bx^t}^2 + \Norm{\bx^t - \vx^t_i}^2\right),
\end{split}
\end{align}
where the first inequality is based on the fact $\|\frac{1}{n}\sum_{j=1}^n  a_j\|^2 \leq \frac{1}{n} \sum_{i=1}^n \Norm{a_j}^2$ for all~$a_1,\dots,a_n\in\BR^d$; 
the second inequality is based on equation (\ref{eq:xi-bq-1}); 
the third inequality is based on Proposition~\ref{prop:smooth}(a); the last inequality is based on Young's inequality.

Combining equations (\ref{eq:V2}), (\ref{eq:V21}), (\ref{eq:V22}), and (\ref{eq:V23}), we achieve
\begin{align}\label{eq:V2-final}
\begin{split} 
& \Norm{\dfrac{1}{n}\sum_{j=1}^{n}\dfrac{\xi^t_{i,j}}{bq}\big(\nabla f_{i,j}(\vx^{t+1}_i) - \nabla f_{i,j}(\vx^t_i)\big) - \frac{1}{n}\sum_{j=1}^n\big(\nabla f_{i,j}(\vx^{t+1}_i)-\nabla f_{i,j}(\vx^t_i)\big)}^2 \\
\leq &  \frac{3mn\bar L^2}{q^2} \Big(\Norm{\vx^{t+1}_i - \bx^{t+1}}^2 +  \Norm{\vx^t_i - \bx^t}^2 \\
& \qquad\quad\quad + 3\Norm{\vx^{t+1}_i - \bx^{t+1}}^2 + 3\Norm{\bx^{t+1} - \bx^t}^2 + 3\Norm{\bx^t - \vx^t_i}^2\Big) \\
= &  \frac{3mn\bar L^2}{q^2} \left(4\Norm{\vx^{t+1}_i - \bx^{t+1}}^2 +  4\Norm{\vx^t_i - \bx^t}^2  + 3\Norm{\bx^{t+1} - \bx^t}^2\right).
\end{split}
\end{align}
Combining equations (\ref{eq:V-initial}) and (\ref{eq:V2-final}), we have
\begin{align}\label{eq:V-final-component}
\begin{split}
& \BE\Norm{\vg^{t+1}_i-\nabla f_i(\vx^{t+1}_i)}^2 \\
\leq & (1-p)\BE\Norm{\vg^t_i - \nabla f_i(\vx^t_i)}^2 \\
& +\frac{3(1-p)mn\bar L^2}{q^2}\BE\left[4\Norm{\vx^{t+1}_i - \bx^{t+1}}^2 +  4\Norm{\vx^t_i - \bx^t}^2 + 3\Norm{\bx^{t+1} - \bx^t}^2\right]
\end{split}
\end{align}
Taking the average on equation (\ref{eq:V-final-component}) over $i=1,\dots,m$, we obtain
\begin{align*}
\begin{split}
& \BE[V^{t+1}] \\
\,=\,& \frac{1}{m}\sum_{i=1}^m\BE\Norm{\vg^{t+1}_i-\nabla f_i(\vx^{t+1}_i)}^2 \\
\overset{\!(\ref{eq:V-final-component})\!}{\leq} & \frac{1-p}{m}\sum_{i=1}^m\BE\Norm{\vg^t_i - \nabla f_i(\vx^t_i)}^2 \\
& +\frac{3(1-p)n\bar L^2}{q^2}\sum_{i=1}^m\BE\left[4\Norm{\vx^{t+1}_i - \bx^{t+1}}^2 +  4\Norm{\vx^t_i - \bx^t}^2 + 3\Norm{\bx^{t+1}_i - \bx^t_i}^2\right] \\
\overset{\!(\ref{eq:def-V})\!}{\leq} & (1-p)\BE[V^t] + \dfrac{3(1-p)n\bL^2}{q^2}\sum_{i=1}^m\BE\left[4\Norm{\vx^{t+1}_i - \bx^{t+1}}^2 +  4\Norm{\vx^t_i - \bx^t}^2 + 3\Norm{\bx^{t+1} - \bx^t}^2\right] \\
\,=\, & (1-p)\BE[V^t] + \dfrac{3(1-p)n\bL^2}{q^2}\BE\Big[4\!\Norm{\vx^{t+1}-\vone\bx^{t+1}}^2 \!+\! 4\BE\Norm{\mX^t\!-\!\vone\bx^t}^2 \!+\! 3m\BE\Norm{\bx^{t+1}\!-\!\bx^t}^2\!\Big]  \\
\,\leq\, & (1-p)\BE[V^t] + 3(1-p)m^2n^3\bL^2\BE\Big[8C^t + 4C^t + 3m\BE\Norm{\bx^{t+1}-\bx^t}^2\Big]  \\
\,=\, & (1-p)\BE[V^t] + 36(1-p)m^2n^3\bL^2\BE[C^t] + 9(1-p)m^3n^3\bL^2\BE\Norm{\bx^{t+1}-\bx^t}^2,
\end{split}
\end{align*}
where the last inequality is based on equation (\ref{recursive:x}) and the definition of $C^t$. Therefore, we finish the proof.
\end{proof}

\subsection{The Proofs of Theorem \ref{thm:main}}\label{sec:thm:main}

We prove Theorem \ref{thm:main} by considering the cases of $\bar L<\sqrt{mn}L$ and $\bar L\geq \sqrt{mn}L$, respectively.
Note that \DEAREST~(Algorithm \ref{alg:DEAREST}) only apply variance reduction in the first case, and the algorithm always iterates with the exact local gradient in second ones. 

\begin{proof}
\textbf{Part I:} We first consider the case of
$\bar L<\sqrt{mn}L$.
We scale the results of Lemmas \ref{lem:consensus}--\ref{lem:local} as 
\begin{align*}
\begin{split}    
& \frac{2\eta}{p}\BE[U^{t+1}] 
\leq  \frac{2(1-p)\eta}{p}\BE[U^t] + \frac{18(1-p)m^2n^3\bar L^2\eta}{p}\BE[C^t] + \frac{6(1-p)\bL^2\eta}{bp}\BE\Norm{\bx^{t+1}-\bx^t}^2  \\[0.15cm]
& \frac{\eta}{m^3n^3bp}\BE[V^{t+1}] 
\leq  \frac{(1-p)\eta}{m^3n^3bp}\BE[V^t] + \frac{36(1-p)\bL^2 \eta}{mbp} \BE[C^t] + \frac{9(1-p)\bL^2\eta}{bp}\BE\Norm{\bx^{t+1}-\bx^t}^2 
\end{split}
\end{align*}
and
\begin{align*}
\frac{132m^2n^3\bar L^2\eta}{p}\BE[C^{t+1}]
\leq & \frac{264\rho^2(27m^3n^3\bL^2\eta^2+2)m^2n^3\bar L^2\eta}{p} \BE[C^t] 
 + 528\rho^2m^3n^3\bar L^2\eta^3 \BE[V^t]  \\
& + \frac{2376\rho^2m^6n^6\bar L^4\eta^3}{p}\BE\Norm{\bx^{t+1}-\bx^t}^2.
\end{align*}
Combining with Lemma \ref{lem:decrease-f}, we have
\begin{align}\label{eq:recursion-Phi-0}
\small\begin{split}
  & \BE[\Phi^{t+1}] \\
\overset{(\ref{eq:Lyapunov})}{=} & \BE\left[f(\bx^{t+1}) - f^* + \frac{2\eta}{p}U^{t+1} + \frac{\eta}{m^3n^3bp} V^{t+1} + \frac{132m^2n^3\bar L^2\eta}{p} C^{t+1}\right] \\
\,\,\leq\,\, & \BE\Big[f(\bx^t) - f^* - \frac{\eta}{2}\Norm{\nabla f(\bx^t)}^2  + \eta U^t +  n \bL^2\eta   C^t - \left(\frac{1}{2\eta}-\frac{L}{2}\right)\Norm{\bx^{t+1}-\bx^t}^2\Big]   \\
& + \frac{2(1-p)\eta}{p}\BE[U^t] + \frac{18(1-p)m^2n^3\bar L^2\eta}{p}\BE[C^t] + \frac{6(1-p)\bL^2\eta}{bp}\BE\Norm{\bx^{t+1}-\bx^t}^2  \\
& + \frac{(1-p)\eta}{m^3n^3bp}\BE[V^t] + \frac{36(1-p)\bL^2 \eta}{mbp} \BE[C^t] + \frac{9(1-p)\bL^2\eta}{bp}\BE\Norm{\bx^{t+1}-\bx^t}^2 \\
& + \frac{264\rho^2(27m^3n^3\bL^2\eta^2+2)m^2n^3\bar L^2\eta}{p} \BE[C^t] + 528\rho^2m^3n^3\bar L^2\eta^3 \BE[V^t] \\
& + \frac{2376\rho^2m^6n^6\bar L^4\eta^3}{p}\BE\Norm{\bx^{t+1}-\bx^t}^2\\
\,\,=\,\, & \BE\Big[f(\bx^t) - f^* - \frac{\eta}{2}\Norm{\nabla f(\bx^t)}^2\Big]  + \left(\eta + \frac{2(1-p)\eta}{p}\right)\BE[U^t]   \\
& + \left(\frac{(1-p)\eta}{m^3n^3bp} + 528\rho^2m^3n^3\bar L^2\eta^3\right) \BE[V^t] \\
& +  \left(n \bL^2\eta   + \frac{18(1-p)m^2n^3\bar L^2\eta}{p} + \frac{36(1-p)\bL^2 \eta}{mbp} + \frac{264\rho^2(27m^3n^3\bL^2\eta^2+2)m^2n^3\bar L^2\eta}{p}\right) \BE[C^t] \\
& - \left(\frac{1}{2\eta}-\frac{L}{2}
- \frac{15(1-p)\bL^2\eta}{bp}
- \frac{2376\rho^2m^6n^6\bar L^4\eta^3}{p}\right)\BE\Norm{\bx^{t+1}-\bx^t}^2 \\
\,\,\leq\,\, & \BE\Big[f(\bx^t) - f^* - \frac{\eta}{2}\Norm{\nabla f(\bx^t)}^2  + \frac{2\eta}{p}U^t   
 + \frac{\eta}{m^3n^3bp} V^t 
 +  \left(\frac{132m^2n^3\bar L^2\eta}{p} - \frac{m^2n^3\bar L^2\eta}{p}\right) C^t \Big] \\
 \overset{(\ref{eq:Lyapunov})}{=} & \BE\Big[\Phi^t - \frac{\eta}{2}\Norm{\nabla f(\bx^t)}^2  
  - \frac{m^2n^3\bar L^2\eta}{p} C^t\Big],  
\end{split},
\end{align}
where the first inequality is based on Lemma \ref{lem:decrease-f}--\ref{lem:local} and the second inequality is based on parameter settings shown in the statement of Theorem \ref{thm:main} and
\begin{align*}
  K= \left\lceil\frac{2+\sqrt{2}}{2\sqrt{\gamma}}\ln\left(14\max\left\{\frac{33m^{6.5}n^{6.5}\bL^3}{4L^3},\,\frac{33(27m^3n^3\bL^2+128L^2)}{608L^2}\right\}\right)\right\rceil.
\end{align*}
Summing over equation (\ref{eq:recursion-Phi-0}) with $t=0,\dots,T-1$, we have
\begin{align*}
\begin{split}
  \BE[\Phi^t] 
 \leq \BE\left[\Phi^0 - \frac{\eta}{2}\sum_{t=0}^{T-1}\Norm{\nabla f(\bx^t)}^2  
  - \frac{m^2n^3\bar L^2\eta}{p} C^t\right].
\end{split}
\end{align*}
This implies
\begin{align}\label{eq:recursion-Phi-1}
\begin{split}
   \BE\left[\frac{1}{T}\sum_{t=0}^{T-1}\Norm{\nabla f(\bx^t)}^2\right] 
\leq  \BE\left[\frac{2(\Phi^0 - \Phi^t)}{\eta T}  
  - \frac{2m^2n^3\bar L^2}{p T} \sum_{t=0}^{T-1}C^t\right].
\end{split}
\end{align}

For the first term on the right-hand side of equation (\ref{eq:recursion-Phi-1}), we have
\begin{align}\label{ieq:Phi0-T}
\begin{split}
& \Phi^0 - \Phi^t \\
\overset{(\ref{eq:Lyapunov})}{=} & f(\bx^0) - f^*  + \frac{2\eta}{p}U_{0} + \frac{\eta}{m^3n^3bp} V_{0} + \frac{132m^2n^3\bar L^2\eta}{p}C_{0} \\
 & - \left(f(\bx^t) - f^* + \frac{2\eta}{p}U^{t} + \frac{\eta}{m^3n^3bp} V^{t} + \frac{132m^2n^3\bar L^2\eta}{p}C^{t}\right)\\
\,\overset{(\ref{eq:lower-value})}{\leq}\, & f(\bx^0)  + \frac{2\eta}{p}U_{0} + \frac{\eta}{m^3n^3bp} V_{0} + \frac{132m^2n^3\bar L^2\eta}{p}C_{0}
 - f^* \\
\,\,=\,\, & f(\bx^0) - f^* + \frac{132m^2n^3\bar L^2\eta}{p} C^0 \\
\overset{(\ref{eq:def-C})}{\leq} & f(\bx^0) - f^* + \frac{132m^2n^3\bar L^2\eta^3}{p} \Norm{\mS^0 - \vone\bs^0}^2,
\end{split}
\end{align}
where the first inequality is based on the fact $U^t, V^t, C^t\geq 0$ and Assumption \ref{asm:lower-bounded}; 
the second equality is based on the fact $U^0=V^0=0$;
the second inequality is based on the definition of $C^t$.

For the second term on the right-hand side of equation (\ref{eq:recursion-Phi-1}), we have
\begin{align}\label{ieq:Phi0-C1}
\frac{2m^2n^3\bar L^2}{p T} \sum_{t=0}^{T-1}C^t
\overset{(\ref{eq:def-C})} \geq  \frac{2m^2n^3\bar L^2}{p T} \sum_{t=0}^{T-1}\Norm{\mX^t-\vone\bx^t}^2 \geq \frac{2L^2}{T} \sum_{t=0}^{T-1}\Norm{\mX^t-\vone\bx^t}^2,
\end{align}
where the first inequality is based on the definition of $C^t$; the second inequality is base on the setting $p\leq 1$ and the assumption $\bL \geq L$.

Combining equations (\ref{eq:recursion-Phi-1}), (\ref{ieq:Phi0-T}), and (\ref{ieq:Phi0-C1}), we have
\begin{align}\label{ieq:grad-bx}
\begin{split}    
 & \BE\left[\frac{1}{T}\sum_{t=0}^{T-1}\Norm{\nabla f(\bx^t)}^2\right]  \\
\overset{(\ref{eq:def-C})}{\leq} & \frac{2}{\eta T}\left(f(\bx^0) - f^* + \frac{132m^2n^3\bar L^2\eta^3}{p} \Norm{\mS^0 - \vone\bs^0}^2\right) - \frac{2L^2}{T} \sum_{t=0}^{T-1}\Norm{\mX^t-\vone\bx^t}.
\end{split}
\end{align}
Therefore, the output $\vx^{\rm out}_i$ satisfies 
\begin{align}\label{eq:grad-x-out}
\begin{split}
& \BE\Norm{\nabla f(\vx^{\rm out}_i)}^2 \\
\,=\, & \frac{1}{T}\sum_{t=0}^{T-1}\BE\Norm{\nabla f(\vx^t_i)}^2 \\
\,\leq\, & \frac{2}{T}\sum_{t=0}^{T-1} \BE\left[\Norm{\nabla f(\bx^t)}^2 + \Norm{\nabla f(\vx^t_i)-\nabla f(\bx^t)}^2\right] \\
\overset{(\ref{eq:smooth-global})}{\leq} & \frac{2}{T}\sum_{t=0}^{T-1} \BE\Norm{\nabla f(\bx^t)}^2  + \frac{2L^2}{T}\sum_{t=0}^{T-1}\BE\Norm{\vx^t_i-\bx^t}^2 \\
\,\leq\, & \frac{2}{T}\sum_{t=0}^{T-1}\BE\Norm{\nabla f(\bx^t)}^2  + \frac{2L^2}{T}\sum_{t=0}^{T-1}\BE\Norm{\mX^t-\vone\bx^t}^2 \\
\overset{(\ref{ieq:grad-bx})}{\leq} & \frac{4}{\eta T}\left(f(\bx^0) - f^* + \frac{132m^2n^3\bar L^2\eta^3}{p} \Norm{\mS^0 - \vone\bs^0}^2\right) - \frac{4L^2}{T} \sum_{t=0}^{T-1}\Norm{\mX^t-\vone\bx^t} \\
& + \frac{2L^2}{T}\sum_{t=0}^{T-1}\BE\Norm{\mX^t-\vone\bx^t}^2 \\
\,\leq\, & \frac{32}{33}\epsilon^2 + \frac{1}{33}\epsilon^2 = \epsilon^2,
\end{split}
\end{align}
for all $i\in[m]$ where the first inequality is based on Young's inequality; 
the second inequality is based on Assumption \ref{asm:smooth-global};
the third inequality is based on equation (\ref{ieq:grad-bx}); 
the last inequality is based on the parameter settings shown in the statement of Theorem~\ref{thm:main} and taking
\begin{align*}\small
\begin{split}    
  \hK=\left\lceil\frac{2+\sqrt{2}}{2\sqrt{\gamma}}  \ln \left(1+\frac{1089m^{2.5}n^{3.5}\bar L}{4L(33L(f(\bx^0)-f^*)+\epsilon^2)}\sum_{i=1}^m\Norm{\nabla f_i(\bx^0)-\nabla f(\bx^0)}^2\right)\right\rceil. 
\end{split}
\end{align*}
Finally, we apply Jensen's inequality to obtain
\begin{align}\label{eq:main-final}
\BE\Norm{\nabla f(\vx^{\rm out}_i)}\leq \sqrt{\BE\|\nabla f(\vx^{\rm out}_i)\|^2\,}\overset{(\ref{eq:grad-x-out})}{\leq}\epsilon.
\end{align}

\textbf{Part II:} We then consider the case of $\bar L\geq\sqrt{mn}L$.
Note that the setting $p=1$ leads to  Algorithm \ref{alg:DEAREST} always holds 
\begin{align*}
    \vg^t_i = \nabla f_i(\vx^t_i),
\end{align*}
which implies $U^t=V^t=0$ for all $t$ and the Lyapunov can be simplified to 
\begin{align}\label{eq:Lyapunov-2}
\Phi^t \triangleq & f(\bx^{t}) - f^* + 132m^2n^3\bar L^2\eta C^{t}.
\end{align}
Similar to the derivation of equation (\ref{eq:recursion-Phi-0}), we have
\begin{align}\label{eq:proof-main-last}\small
\begin{split}
  & \BE[\Phi^{t+1}] \\
\overset{(\ref{eq:Lyapunov-2})}{=} & \BE\left[f(\bx^{t+1}) - f^* + 132m^2n^3\bar L^2\eta C^{t+1}\right] \\
\,\,\leq\,\, & \BE\Big[f(\bx^t) - f^* - \frac{\eta}{2}\Norm{\nabla f(\bx^t)}^2  +  n \bL^2\eta   C^t - \left(\frac{1}{2\eta}-\frac{L}{2}\right)\Norm{\bx^{t+1}-\bx^t}^2   \\
& + 264\rho^2(27m^3n^3\bL^2\eta^2+2)m^2n^3\bar L^2\eta\BE[C^t] + 2376\rho^2m^6n^6\bar L^4\eta^3\BE\Norm{\bx^{t+1}-\bx^t}^2 \\
\,\,=\,\, & \BE\Big[f(\bx^t) - f^* - \frac{\eta}{2}\Norm{\nabla f(\bx^t)}^2\Big]  \\
& +  \left(n \bL^2\eta  + 264\rho^2(27m^3n^3\bL^2\eta^2+2)m^2n^3\bar L^2\eta\right) \BE[C^t] \\
& - \left(\frac{1}{2\eta} - \frac{L}{2}
- 2376\rho^2m^6n^6\bar L^4\eta^3\right)\BE\Norm{\bx^{t+1}-\bx^t}^2 \\
\,\,\leq\,\, & \BE\Big[f(\bx^t) - f^* - \frac{\eta}{2}\Norm{\nabla f(\bx^t)}^2  +  \left(132m^2n^3\bar L^2\eta - m^2n^3\bar L^2\eta\right) C^t \Big] \\
 \overset{(\ref{eq:Lyapunov-2})}{=} & \BE\Big[\Phi^t - \frac{\eta}{2}\Norm{\nabla f(\bx^t)}^2  
  - m^2n^3\bar L^2\eta C^t\Big], 
\end{split}
\end{align}
where the last inequality is based on the parameter settings shown in the statement of Theorem \ref{thm:main} and taking
\begin{align*}    
K= \left\lceil\frac{2+\sqrt{2}}{2\sqrt{\gamma}}\ln\left(14\max\left\{\frac{33(27m^3n^3\bL^2+128L^2)}{1040L^2},\,\frac{297m^{6.5}n^{6.5}\bL^3}{112L^3}\right\}\right)\right\rceil.
\end{align*}
We then follow the derivation of equations (\ref{eq:recursion-Phi-1})--(\ref{eq:main-final}) to achieve
\begin{align*}
\BE\Norm{\nabla f(\vx^{\rm out}_i)} \leq \epsilon.
\end{align*}
for all $i\in[m]$.
\end{proof}

\begin{remark}
It is worth noting that Theorem \ref{thm:main} guarantees that each agent can obtain an~$\epsilon$-stationary point $\vx_i^{\rm out}$ in expectation.
We achieve this result by retaining the negative term $-m^2n^3\bar L^2\eta p^{-1}C^t$ in the last line of equation (\ref{eq:proof-main-last}), which is helpful to cancel the terms related to consensus error in the output $\{\vx_i^{\rm out}\}_{i=1}^m$.
In contrast, the previous work only guarantee one of the agents can obtain an $\epsilon$-stationary point \cite{xin2022fast,li2022destress}, or the average of vectors from all agents is an $\epsilon$-stationary point \cite{zhan2022efficient,metelev2024decentralized,sun2020improving,yuan2022revisiting,lu2021optimal}.
\end{remark}

\subsection{The Proofs of Corollary \ref{cor:complexity-general}}\label{sec:cor:complexity-general}

We prove Corollary \ref{cor:complexity-general} by considering the communication rounds, the local incremental first-oracle calls, and the computation rounds, respectively.

\subsubsection{The Upper Bound on Communication Rounds}

We upper bound the communication rounds as follows
\begin{align*}
    \hat K + KT = \fO\left(\frac{\ln(mn\bL/(L\epsilon))}{\sqrt{\gamma}}+\frac{\ln(mn\bL/L)}{\sqrt{\gamma}}\cdot \frac{L}{\epsilon^2}\right) = \tilde\fO\left(\frac{L}{\sqrt{\gamma}\epsilon^2}\right),
\end{align*}
where we use the settings of $\hK$, $K$, and $T$ in Theorem \ref{alg:DEAREST}.

\subsubsection{The Upper Bound on LIFO Calls}

In the case of $\bL\leq\sqrt{mn}L$, we upper bound the number of LIFO calls as follows
\begin{align*}
 &   mn + (pn + (1-p)b)T \\
\leq & mn + (pmn + b)T \\
= & \fO\left(mn + \left(\frac{\bL}{\sqrt{mn}L}\cdot mn + \left\lceil\frac{\bL\sqrt{mn}}{L}\,\right\rceil \right)\frac{L}{\epsilon^2}\right) \\
= & \fO\left(mn + \frac{\sqrt{mn}\bL}{\epsilon^2}\right),
\end{align*}
where we use the settings of $p$, $n$, and $T$ in Theorem \ref{alg:DEAREST}.
In the case of $\bL>\sqrt{mn}L$, the number of LIFO calls can be upper bounded by $mnT = \fO(mn + mnL\epsilon^{-2})$.

Combining above two cases, the overall LIFO complexity of Algorithm \ref{alg:DEAREST} under the settings of Theorem~\ref{thm:main} is no more than
$\fO\big(mn+ \min\{mnL,\,\sqrt{mn}\bL\}\epsilon^{-2}\big)$.

\subsubsection{The Upper Bound on Computation Rounds}\label{sec:computation-rounds}

In the case of $\bL>\sqrt{mn}L$, the agents always perform the full gradient at each iteration, which means the number of computation round is no more than
\begin{align}\label{eq:computation-round-simple}
    n + Tn =  \fO\left(n + \frac{nL}{\epsilon^2}\right).
\end{align}

The remains in this subsection considers the case of $\bL\leq\sqrt{mn}L$.
The main issue is the case of $\zeta^t=0$, leading to
\begin{align}\label{eq:com-rounds-update}
    \vg^{t+1}_i = \vg^t_i + \dfrac{1}{n}\sum_{j=1}^{n}\frac{\xi^t_{i,j}}{bq}\big(\nabla f_{i,j}(\vx^{t+1}_i) - \nabla f_{i,j}(\vx^t_i)\big). 
\end{align}
The setting
\begin{align*} 
[\xi^t_{1,1},\dots,\xi^t_{m,n}]^\top\sim {\rm Multinomial}\left(b,q\vone\right)
\end{align*}
with $q=1/(mn)$ means the update (\ref{eq:com-rounds-update}) can be regarded as the procedure of sampling~$b$ index pairs $(i_1^t,j_1^t),\dots,(i_b^t,j_b^t)$ from the set
\begin{align*}
    \{(i,j): i\in[m], j\in[n]\}
\end{align*}
independently and uniformly with replacement and performing the computation
\begin{align*}
    \vg^{t+1}_i = \vg^t_i + \sum_{k=1}^{b}\frac{\tau_{i}^t(k)}{bq}\big(\nabla f_{i_k^t,j_k^t}(\vx^{t+1}_{i_k^t}) - \nabla f_{i_k^t,j_k^t}(\vx^t_{i_k^t})\big) 
\end{align*}
on each node, where we define
\begin{align}\label{eq:tauitk}
 \tau_{i}^t(k) \triangleq \BI(i_k=i)
\end{align}
for $i\in[m]$ and $k\in [b]$ to present whether the $k$-th index pair $(i_k^t,j_k^t)$ in iteration $t$ corresponds to the component function on node $i$. 

Since there are $mn$ individual functions in total and each node has $n$ individual functions, the uniform distribution of $(i_k^t, j_k^t)$ and their independence imply
\begin{align}\label{eq:tau-random}
\tau_{i}^t(k) \sim \text{Bernoulli}(nq)
\end{align}
and $\tau_i^t(1),\dots,\tau_i^t(b)$ are mutually independent.
Therefore, the procedure of Algorithm~\ref{alg:DEAREST} means the number of LIFO calls on node $i$ at the $t$-th iteration  is no more than 
\begin{align*}
    2\sum_{k=1}^b \tau_i^t(k).
\end{align*}
We denote
\begin{align}\label{eq:Yit}
    Y_i^t \triangleq \sum_{k=1}^b \tau_i^t(k),
\end{align}
which holds that $Y_i^t \sim \text{Binomial}(b, nq)$ because of the distribution (\ref{eq:tau-random}). 

To analyze the expected computation rounds in the $t$-th iteration, we only need to upper bound the quantity
\begin{align}\label{eq:sum_Y}
\BE\left[2\max_{i\in[m]} Y_i^t\right].    
\end{align}
In a recent work, \citet{liu2024decentralized} analyzed the computation rounds of Katyusha-type methods \cite{allen2017katyusha,qian2021svrg,kovalev2020don,song2024optimal} in decentralized convex optimization by establishing the upper bound of the quantity in the form of (\ref{eq:sum_Y}) with the mutually independent binomial variables $Y_1^t,\dots,Y_m^t$.
Although the random variables $Y_1^t,\dots,Y_m^t$ are not mutually independent in our setting, we can also follow the analysis of \citet{liu2024decentralized} to bound the quantity (\ref{eq:sum_Y}).
For the completeness, we present the details as follows.

\begin{lemma}[Theorem 4.1 of \citet{motwani1995randomized}]\label{lem:chernoff_bound}
    Suppose that the random variables $X_1,...,X_b$ are independent and each  $X_k$ is distributed to ${\rm Bernoulli}(p_k)$ for some $p_k\in[0,1]$. 
    We let $Z = \sum_{k=1}^b X_k$ and $\nu=\BE[Z]$. 
    Then for any $\delta > 0$, it holds 
    \begin{align}\label{eq:lem_chernoff_bound}
        \BP\left( Z\ge (1+\delta)\nu \right) \le \left( \frac{\exp(\delta)}{(1+\delta)^{1+\delta}} \right)^\nu.
    \end{align}
\end{lemma}

Based on Lemma \ref{lem:chernoff_bound}, we further achieve the following upper bound for the sum of Bernoulli variables with high probability. 

\begin{lemma}\label{lem:binomial_bound}
Under the settings and notations of Lemma \ref{lem:chernoff_bound},
For any constant $a \ge \exp(2)$, we have
    \begin{align}\label{eq:lem_binomial_bound}
        \BP\left( Z \ge 2{\rm e} \max\{ \BE[Z], (\ln a)^2 \}  \right) \le \frac{1}{a^2}.
    \end{align}
\end{lemma}
\begin{proof}
We denote $\nu=\BE[Z]=\sum_{k=1}^b p_k$. 
We consider the upper bounds for different cases of $\nu$ as follows:
\begin{itemize}
 \item[(a)] 
 In the case of $\nu \ge \ln a$, we apply Lemma~\ref{lem:chernoff_bound} with $\delta=2{\rm e}-1$ to achieve
 \begin{align*}
     & \BP(Z \ge 2{\rm e}\nu) \overset{(\ref{eq:lem_chernoff_bound})}{\le}  \left( \frac{\exp(2{\rm e}-1)}{(2{\rm e})^{2{\rm e}}} \right)^\nu 
     = \left( \frac{\exp(2{\rm e}-1)}{2^{2{\rm e}}\exp(2{\rm e})} \right)^\nu  \\
     = & \left( \frac{1}{2^{2{\rm e}}\rm e} \right)^\nu
     \leq \left( \frac{1}{2^{2{\rm e}}\rm e} \right)^{\ln a}
     \leq \frac{1}{a^2},
 \end{align*}
 where the last step can be verified by taking logarithm on both sides and the fact
 \begin{align*}
     2 \leq \ln\left(2^{2{\rm e}}\rm e\right) = 1 + 2{\rm e}\ln2.
 \end{align*}

\item[(b)]  If $1 \le \nu \le \ln a$, we have
\begin{align*}
    \BP\left( Z\ge 2{\rm e}(\ln a)^2 \right) 
    \le \BP\left( Z\ge (2{\rm e}\ln a)\nu \right).  
\end{align*}
According to Lemma~\ref{lem:chernoff_bound} with $\delta = 2{\rm e}\ln a-1$, we have
\begin{align*}
&    \BP\left( Z\ge (2{\rm e}\ln a)\nu \right) 
\overset{(\ref{eq:lem_chernoff_bound})}{\le} \left( \frac{\exp(2{\rm e}\ln a-1)}{(2{\rm e}\ln a)^{2{\rm e}\ln a}} \right)^\nu \\
= & \left( \frac{\exp(2{\rm e}\ln a-1)}{(2\ln a)^{2{\rm e}\ln a}\exp(2{\rm e}\ln a)} \right)^\nu  \\
= & \left( \frac{1}{(2\ln a)^{2{\rm e}\ln a}{\rm e}} \right)^\nu 
\leq \left( \frac{1}{4^{2{\rm e} \ln a}{\rm e}} \right)^\nu
\leq \frac{1}{a^2},
\end{align*}
where the second inequality is because $a \ge {\rm e}^2$ and the last inequality holds as~$\nu \ge 1$.
\item[(c)] If $\nu < 1$, we let $X_1'\sim{\rm Bernoulli}(p_1')$ with $p_1'=1-\nu + \BE[X_1]$ which is independent with $X_1,\dots,X_b$, and  denote $Z' = \sum_{j=2}^b X_j + X_1'$. It is clear that for any $c\geq 0$, it holds 
\begin{align}\label{eq:Z-Z'}
    \BP(Z \ge c) \le \BP(Z' \ge c).
\end{align}
We also can verify that
\begin{align}\label{eq:Z'-1}
\BE[Z']
= \BE\left[\sum_{j=2}^b X_j + X_1'\right]
= \sum_{j=2}^b p_j + 1 - \nu + p_1 = 1.
\end{align}
Therefore, applying Lemma~\ref{lem:chernoff_bound} on random variable $Z'$ with $\delta = 2{\rm e} \ln a -1$ leads to that
\begin{align*}
& \BP\left( Z\ge (2{\rm e}\ln a) \right) \\
\,\,=\,\, & \BP\left( Z\ge (2{\rm e}\ln a) \cdot \BE\left[Z'\right] \right) \\
\overset{(\ref{eq:Z-Z'})}{\le} & \BP\left( Z'\ge (2{\rm e}\ln a) \cdot \BE\left[Z'\right] \right)   \\
\overset{(\ref{eq:lem_chernoff_bound})}{\le}& \left( \frac{\exp(2{\rm e}\ln a-1)}{(2{\rm e}\ln a)^{2{\rm e}\ln a}} \right)^{\BE[Z']} \\
\overset{(\ref{eq:Z'-1})}{=}&  \frac{\exp(2{\rm e}\ln a-1)}{(2{\rm e}\ln a)^{2{\rm e}\ln a}} \\
\,\,=\,\, & \frac{\exp(2{\rm e}\ln a-1)}{(2\ln a)^{2{\rm e}\ln a}\exp(2{\rm e}\ln a)}   \\
\,\,=\,\, &  \frac{1}{(2\ln a)^{2{\rm e}\ln a}{\rm e}}  \\
\leq & \frac{1}{4^{2{\rm e} \ln a}{\rm e}} 
\leq \frac{1}{a^2},
\end{align*}
where the second last inequality is because $a \ge {\rm e}^2$.
\end{itemize}
Combining all three cases, we obtain 
\begin{align*}
    \BP\left( Z \ge 2{\rm e} \max\{ \BE[Z], (\ln a)^2 \}  \right) 
    = \BP\left( Z \ge \max\{ 2{\rm e} \nu, 2{\rm e} (\ln a)^2 \}  \right) \le \frac{1}{a^2}
\end{align*}
for all $a \ge {\rm e}^2$, which finishes the proof.
\end{proof}

Recall that it always holds $Y_i^t \le b$ for all $i\in[m]$, then we can apply Lemma \ref{lem:binomial_bound} to upper bound the expectation of $\max_{i\in [m]} Y_i^t$.

\begin{lemma}\label{lem:computation_per_iter}
    Following the definitions in equations (\ref{eq:tauitk}) and (\ref{eq:Yit}), we have
    \begin{align}\label{eq:lem_computation_per_iter}
        \BE\left[ \max_{i\in[m]} Y_i^t \right] \le 4{\rm e}\max\left\{bnq, (2+\ln mb)^2\right\} + 2.
    \end{align}
\end{lemma}
\begin{proof}
Notice that the definition of $Y_i^t$ in equation (\ref{eq:Yit}) means
it is a summation of Bernoulli random variables.
Thus, we can apply Lemma \ref{lem:binomial_bound} with $a =  mb\exp(2)$ and~$Z=Y_i^t$ for all $i\in[m]$ to achieve
\begin{align}\label{eq:prob-Y}
\begin{split}
        & \BP\left( Y_i^t \ge 2{\rm e} \max\left\{ \BE[Y_i^t], (\ln a)^2 \right\} \right) \\
        = &
        \BP\left( Y_i^t \ge 2{\rm e} \max\left\{ bnq, (2+\ln mb)^2 \right\} \right)  
        \overset{(\ref{eq:lem_binomial_bound})}{\le} \frac{1}{m^2b^2\exp(4)},
\end{split}        
\end{align}
where the first step is based on the fact $\BE[Y_i^t]=bnq$.
    
Then we apply Boole's inequality to achieve
    \begin{align}\label{eq:proof_lem_computation_per_iter}
    \begin{split}
        & \BP\left( \max_{i\in [m]} Y_i^t \ge 2{\rm e} \max\left\{bnq, (2+\ln mb)^2\right\}\right) \\
        \,\,=\,\, & \BP\left(\text{exists}~i\in[m]~\text{such that}~\space Y_i^t \ge 2{\rm e} \max\left\{bnq, (2+\ln mb)^2\right\}\right) \\
        \,\,\le\,\, & \sum_{i=1}^m \BP\left(Y_i^t \ge 2{\rm e} \max\left\{bnq, (2+\ln mb)^2\right\}\right) \\
        \overset{(\ref{eq:prob-Y})}{\le} & \sum_{i=1}^m \frac{1}{m^2b^2\exp(4)}
        = \frac{1}{mb^2\exp(4)}.
    \end{split}
    \end{align}
    
    Since we have $Y_i^t \le b$, the conditional expectation formula implies
    \begin{align*}
        & \BE\left[ \max_{i\in[m]} Y_i^t \right] \\
        \,\,=\,\, & \BP\left( \max_{i\in[m]} Y_i^t <  2{\rm e}\max\left\{bnq, (2+\ln mb)^2\right\} \right) \\
        & \quad \cdot \BE\left[ \max_{i\in[m]} Y_i^t \,\Big{|}\, \max_{i\in[m]} Y_i^t <  2{\rm e}\max\left\{bnq, (2+\ln mb)^2\right\} \right] \\
        &+
        \BP\left( \max_{i\in[m]} Y_i^t \ge  2{\rm e}\max\left\{bnq, (2+\ln mb)^2\right\} \right) \\
        & \quad\cdot \BE\left[ \max_{i\in[m]} Y_i^t \,\Big{|}\, \max_{i\in[m]} Y_i^t \ge  2{\rm e}\max\left\{bnq, (2+\ln mb)^2\right\} \right] \\
        \overset{(\ref{eq:proof_lem_computation_per_iter})}{\le}& \left( 1 - \frac{1}{mb^2\exp(4)} \right) \cdot 2{\rm e}\max\left\{bnq, (2+\ln mb)^2\right\} + \frac{1}{mb^2\exp(4)} \cdot b \\
        \,\,\leq\,\, & 2{\rm e}\max\left\{bnq, (2+\ln mb)^2\right\} + 1
    \end{align*}
    Hence, we have
    \begin{align*}
        2\BE\left[ \max_{i\in[m]} Y_i^t \right]\leq 4{\rm e}\max\left\{bnq, (2+\ln mb)^2\right\} + 2 = \fO\left( bnq + (\ln mb)^2 \right) ,
    \end{align*}
    which concludes the proof.
\end{proof}

In the case of $\zeta^t=0$, Lemma \ref{lem:computation_per_iter} means the expected number of computation rounds is no more than
    \begin{align*}
        2\BE\left[ \max_{i\in[m]} Y_i^t \right]\leq 4{\rm e}\max\left\{bnq, (2+\ln mb)^2\right\} + 2 = \fO\left( bnq + (\ln mb)^2 \right)
\end{align*}
In the case of $\zeta^t=1$, it is obviously that the number of computation rounds is~$n$.
Therefore, the overall expected number of computation rounds for Algorithm \ref{alg:DEAREST} in the case of $\bL\leq\sqrt{mn}L$ is no more than
\begin{align*}
& n + \left((1-p)\BE\left[2\sum_{t=1}^T \max_{i\in[m]} Y_i^t \right] + pn\right)T \\
\leq & \fO\left(n + (pn+(1-p)(bnq + (\ln mb)^2))T\right) \\
\leq & \fO\left(n + (pn + bnq + (\ln mb)^2)T\right) \\
\leq & \fO\left(n + \left(\frac{n\bL}{\sqrt{mn}L} + \frac{\bL\sqrt{mn}\,n}{mnL} + \left(\ln \frac{m\bL\sqrt{mn}}{L}\right)^2\right)L\epsilon^{-2}\right) \\
= & \tilde\fO\left(n + \left(\sqrt{\frac{n}{m}}\,\bL + L\right)\epsilon^{-2}\right), 
\end{align*}
where we use the parameter settings of $b$, $p$, and $T$ in Theorem \ref{thm:main}.
Combining the upper bound (\ref{eq:computation-round-simple}) in the case of $\bL\leq\sqrt{mn}L$, the expected number of computation rounds of our method can be upper bounded by
\begin{align*}
 & \min\left\{\fO\left(n + nL\epsilon^{-2}\right),\, \tilde\fO\left(n + \left(\sqrt{\frac{n}{m}}\,\bL + L\right)\epsilon^{-2}\right)\right\} \\
=& \tilde\fO\left( n+\left(L+\min\left\{nL, \sqrt{\frac{n}{m}}\,  \bar L\right\}\right)\epsilon^{-2}\right).
\end{align*}
Hence, we finish the proof of Corollary \ref{cor:complexity-general}. 

\section{The Lower Bounds for General Nonconvex Case}

This section provide the lower bounds for the communication rounds, the LIFO calls, and the computation rounds for finding the approximate first-order stationary point in   decentralized smooth nonconvex finite-sum optimization problem by the algorithm class shown in Definition \ref{dfn:LIFO}. 
Without the loss of generality, we always assume the  algorithm iterates with initial point $\bx^{0}=\vzero$ in this section. 
Otherwise, we can take functions $\{f_{i,j}(\vx-\vx^{0})\}_{i,j=1}^{m,n}$ into considerations.
Compared with most of existing lower bounds only consider one of smoothness parameters \cite{carmon2021lower,carmon2020lower,yuan2022revisiting,lu2021optimal,arjevani2023lower,hendrikx2021optimal,agarwal2015lower,woodworth2016tight}, we simultaneously address the global smoothness parameter~$L$ and the mean-squared smoothness parameter $\bL$ in each of our lower bounds.

For our later analysis, we introduce the nonconvex function~$f^{\rm nc}_T:\R^T\to\R$ provided by \citet{carmon2020lower} as follows
\begin{align}\label{eq:function-nc}
    f^{\rm nc}_T(\vx) := -\Psi(1)\Phi(x_1)+\sum_{k=2}^T \left(\Psi(-x_{k-1})\Phi(-x_k)-\Psi(x_{k-1})\Phi(x_k)\right),
\end{align}
where $\vx=[x_1,\dots,x_T]^\top$ and the component functions are defined as
\begin{align}\label{eq:lower-bound-phi-pi}
\begin{split}
 &   \Psi(x) \triangleq 
		\begin{cases}
			0, & x\leq 1/2, \\[0.2cm]
			\exp{\left(1-\dfrac{1}{(2x-1)^2}\right)}, & x>1/2,
		\end{cases} \\
& \text{and} \qquad 
	\Phi(x) := \sqrt{{\rm e}}\int_{-\infty}^x \exp\left(-\frac{1}{2} t^2\right)\,{\rm d} t.
\end{split}
\end{align}
We denote
\begin{align*}
    \prog{(\vx)}\triangleq \max\{i \geq 0 : |x_i|>0\},
\end{align*}
where $\vx=[x_1,\dots,x_T]^\top$ and we let $x_0=1$.
The functions $\Phi(\cdot)$ and $\Psi(\cdot)$ defined equation (\ref{eq:lower-bound-phi-pi}) have the following properties. \vskip0.1cm

\begin{lemma}[{\citep[Observation 2]{arjevani2023lower}}]\label{lem:properties_phi_psi} The function value and (second-order) derivatives of $\Phi$ and $\Psi$ defined in equation (\ref{eq:lower-bound-phi-pi}) satisfy 
\begin{align*}
    & ~~0\leq\Psi\leq{\rm e},\qquad 0\leq\Psi'\leq\sqrt{54/{\rm e}}, \qquad\lvert\Psi''\rvert\leq32.5, \\
    & 0\leq\Phi\leq\sqrt{2\pi {\rm e}}, \qquad
    0\leq\Phi'\leq\sqrt{\rm e}, \qquad \text{and} \qquad\lvert\Phi''\rvert\leq1.
\end{align*}    
\end{lemma}

We can decompose the function $f_T^{\rm nc}$ defined in equation (\ref{eq:function-nc}) as
\begin{align}\label{eq:nc1nc2-decompose}
    f_T^{\rm nc}(\vx) = f_T^{\rm nc,1}(\vx) + f_T^{\rm nc,2}(\vx),
\end{align}
where
\begin{equation}\label{eq:nc1nc2}
\begin{aligned}
    & f_T^{\rm nc,1}(\vx)=-\Psi(1)\Phi(x_1)+\sum\limits_{k~\text{is odd};\, 2\leq k\leq T} (\Psi(-x_{k-1})\Phi(-x_k)-\Psi(x_{k-1})\Phi(x_k)),\\
    & \text{and} \qquad f_T^{\rm nc,2}(\vx)=\sum\limits_{k~\text{is even};\, 2\leq k\leq T} (\Psi(-x_{k-1})\Phi(-x_k)-\Psi(x_{k-1})\Phi(x_k)).
\end{aligned}
\end{equation}
Lemma \ref{lem:properties_phi_psi} implies the following properties for functions $f_T^{\rm nc,1}$,  $f_T^{\rm nc,2}$, and $f_T^{\rm nc}$.

\begin{lemma}\label{lem:function-nc-decompose}
The functions $f_T^{\rm nc,1}$ and $f_T^{\rm nc,2}$ defined in equation (\ref{eq:nc1nc2}) is $l_0$-smooth with~$l_0=152$.
\end{lemma}
\begin{proof} 
    For all $\vx\in\BR^T$, we have
    \begin{align*}
        & \|\nabla^2 f_T^{{\rm nc},1}(\vx)\|_2 
        \leq \sqrt{\|\nabla^2 f_T^{{\rm nc},1}(\vx)\|_1 \|\nabla^2 f_T^{{\rm nc},1}(\vx)\|_{\infty}} = \|\nabla^2 f_T^{{\rm nc},1}(\vx)\|_{\infty} \\
        & = \max_{i\in[T]} \sum_{j=1}^T |\nabla_{i,j}^2 f_T^{{\rm nc},1}(\vx)| = \max_{i\in[T]} \sum_{j\in[T], |i-j|\leq 1}|\nabla_{i,j}^2 f_T^{{\rm nc},1}(\vx)| \\
        &\leq\max_{i\in[T]}|\nabla_{i,i}^2 f_T^{{\rm nc},1}(\vx)|
	+ \max_{i\in[T-1]}|\nabla_{i,i+1}^2 f_T^{{\rm nc},1}(\vx)| + 
	\max_{i\in\{2,\dots,T\}} |\nabla_{i-1,i}^2 f_T^{{\rm nc},1}(\vx)|\\
        &\leq\sup_{z\in\R} |\Phi''(z)|  \sup_{z\in\R} |\Psi(z)| + 
	\sup_{z\in\R} |\Phi(z)| \sup_{z\in\R} |\Psi''(z)| + 2 \sup_{z\in\R} 
	|\Phi'(z)| \sup_{z\in\R} |\Psi'(z)|\\
        &\leq152,
    \end{align*}
    where the first inequality is based on H\"older's inequality; 
    the first equality is due to the Hessian is symmetric;
    the second equality is based on the definition of the infinity norm;
    the third equality is due to the definition of $f_T^{{\rm nc},1}$ in equation (\ref{eq:nc1nc2});
    the second inequality is based on the chain rule;
    the last step is based on Lemma \ref{lem:properties_phi_psi}. Therefore, the function $f_T^{{\rm nc},1}$ is 152-smooth.
    Similarly, we can prove the function $f_T^{{\rm nc},2}$  is also $152$-smooth, which finish the proof.
\end{proof}

\begin{lemma}[{\cite[Lemma 2]{arjevani2023lower}}]\label{cor:f^nc}
The function $f^{\rm nc}_T$ defined in equation (\ref{eq:function-nc}) satisfies:
\begin{enumerate}[itemsep=3pt,topsep=2pt]
\item[(a)]  The function value of $f^{\rm nc}_T$ holds $f^{\rm nc}_T(\vzero)-\inf_{\vy\in\BR^T}f^{\rm nc}_T(\vy)\leq12T$.
\item[(b)]  The gradient of $f^{\rm nc}_T$ is $l_0$-Lipschitz continuous, where $l_0=152$.
\item[(c)]  For all $\vx=[x_1,\dots,x_T]\in\BR^T$, we have $\prog(\nabla f^{\rm nc}_T(\vx)) \leq \prog(\vx)+1$.
\item[(d)]  For all $\vx=[x_1,\dots,x_T]\in\BR^T$ such that ${\rm supp}(\vx)\subseteq\left\{1,2,\cdots,T-1\right\}$, we have~$\|\nabla f^{\rm nc}_T(\vx)\|>1$.
\end{enumerate}\vskip0.1cm
\end{lemma}

We also provide the following lemmas for our later analysis, extending the results of Lemmas~5.1 and 5.2 from \citet{zhou2019lower}. 

\begin{lemma}\label{lem:scale}
	Suppose that the function $g:\mathbb{R}^T\rightarrow\mathbb{R}$ is $\hat{L}$-smooth and has lower bound~$g^*=\inf_{\vy\in\BR^T} g(\vy)$, 
    then the function $\hat g(\vx)=\alpha g(\beta \vx)$ is $\alpha\beta^2\hat{L}$-smooth and satisfies 
    \begin{align*}
    \|\nabla \hat g(\vx)\|=\alpha\beta\|\nabla g(\beta\vx)\| \qquad\text{and}\qquad \hat g(\mathbf{0})-\hat g^*=\alpha (g(\mathbf{0})-g^*) 
    \end{align*}
    for all $\alpha,\beta>0$, where 
    $\hat g^*=\inf_{\vy\in\BR^T} \hat g(\vy)$.
    If we further suppose the function $g$ is~$\hat\mu$-PL, then the function $\hat g$ is $\alpha\beta^2\hat\mu$-PL.
\end{lemma}
\begin{proof}
     For all $\vx,\vy\in\BR^{mn}$, the smoothness of $g:\BR^m\to\BR$ implies
     \begin{align*}
             \Norm{\nabla\hat g(\vx)-\nabla\hat g(\vy)} 
             = \alpha\beta\Norm{\nabla g(\beta \vx)-\nabla g(\beta \vy)}
             \leq & \alpha\beta \hat{L}\Norm{\beta \vx-\beta  \vy} 
             \leq \alpha\beta^2 \hat{L}\Norm{\vx-\vy},
     \end{align*}
     which means $\hat g$ is $\alpha\beta^2 \hat{L}$-smooth.
     
     We can verify that $\hat g(\mathbf{0})=\alpha g(\mathbf{0})$ and $\hat g^*=\alpha g^*$, which means
     \begin{align*}
         \hat g(\mathbf{0})-\hat g^*=\alpha (g(\mathbf{0})-g^*).
     \end{align*}   
     
     For any $\vx\in\BR^{mn}$, the PL condition of $g:\BR^m\to\BR$ implies
     \begin{align*}
             \Norm{\nabla\hat g(\vx)}^2 
             = \alpha^2\beta^2\Norm{\nabla g(\beta\vx)}^2
             \geq  2\alpha^2\beta^2\hat{\mu}(g(\beta\vx)-g^*)
             = 2\alpha\beta^2\hat{\mu}(\hat g(\vx)-\hat g^*),
     \end{align*}
     which means $\hat g$ is $\alpha\beta^2\hat{\mu}$-PL.
     Hence, we finish the proof.
\end{proof}

\begin{lemma}\label{lem:orthogonal transformation}
Given a function \mbox{$g:\mathbb{R}^T\rightarrow\mathbb{R}$} that is $\hat{L}$-smooth, we define $f_i(\vx)=g(\mU^{(i)}\vx)$ for $\vx\in\BR^{mT}$ and $i\in[m]$, where $\mU^{\left(i\right)}=[\ve_{(i-1)T+1},\cdots,\ve_{iT}]^\top\in\BR^{T\times mT}$ and we denote $\ve_k=[0,\dots,1,\dots,0]^\top\in\BR^{mT}$ as the $k$-th standard basis vector in Euclidean space~$\BR^{mT}$,
then the functions $\{f_i:\mathbb{R}^{mT}\rightarrow\mathbb{R}\}_{i=1}^m$ are $\hat{L}/\sqrt{m}$-mean-squared smooth, 
and the function $f=\frac{1}{m}\sum_{i=1}^{m}f_i$ is $\hat{L}/m$-smooth and satisfies 
\begin{align*}
f(\mathbf{0})-\inf_{\vy\in\BR}f(\vy)=g(\mathbf{0})-\inf_{\vy\in\BR}g(\vy).
\end{align*}
If we further suppose the function $g$ is~$\hat\mu$-PL, then the function $f$ is~$\hat\mu/n$-PL.
\end{lemma}
\begin{proof}
For any $\vx,\vy\in\BR^{mn}$, the smoothness of $g:\BR^m\to\BR$ implies
\begin{align*}
& \Vert\nabla f_i(\vx)-\nabla f_i(\vy)\Vert \\
= & \Big\Vert(\mU^{(i)})^\top\nabla g(\mU^{(i)}\vx)-(\mU^{(i)})^\top\nabla g(\mU^{(i)}\vy)\Big\Vert \\
= & \big\Vert\nabla g(\mU^{(i)}\vx)-\nabla g(\mU^{(i)}\vy)\big\Vert \\
\leq & \hat{L}\big\Vert \mU^{(i)}(\vx-\vy)\big\Vert 
\leq \hat{L}\Vert \vx-\vy\Vert,
\end{align*}
and
\begin{align*}
 & \frac{1}{n}\sum_{i=1}^n\Vert\nabla f_i(\vx)-\nabla f_i(\vy)\Vert^2  \\
=& \frac{1}{n}\sum_{i=1}^{n}\Big\Vert(\mU^{(i)})^\top\nabla g(\mU^{(i)}\vx)-(\mU^{(i)})^\top\nabla g(\mU^{(i)}\vy)\Big\Vert^2 \\
=& \frac{1}{n}\sum_{i=1}^{n}\big\Vert\nabla g(\mU^{(i)}\vx)-\nabla g(\mU^{(i)}\vy)\big\Vert^2 \\
=& \frac{\hat{L}^2}{n}\sum_{i=1}^{n}\big\Vert \mU^{(i)}(\vx-\vy)\big\Vert^2 
\leq  \frac{\hat{L}^2}{n}\Vert \vx-\vy\Vert^2.
\end{align*}
This implies each $f_i$ is $\hat{L}$-smooth and $\{f_i\}_{i=1}^n$ are $\hat{L}/\sqrt{n}$-mean-squared smooth.

Consider the facts $f(\vzero)=g(\vzero)$ and
\begin{align}\label{eq:PL-finite-sum}
      \inf\limits_{\vx\in \mathbb{R}^{mn}}\sum_{i=1}^{n}g(\mU^{(i)}\vx)=\sum_{i=1}^{n}\inf\limits_{\vx\in \mathbb{R}^{mn}}g(\mU^{(i)}\vx)=n\inf\limits_{\vy\in \mathbb{R}^{m}}g(\vy),
\end{align}
then we have
$f(\mathbf{0})-\inf_{\vy\in\BR}f(\vy)=g(\mathbf{0})-\inf_{\vy\in\BR}g(\vy)$.

For all $\vx\in\BR^{mn}$, the PL condition of $g:\BR^m\to\BR$ implies
\begin{align*}
	& \Vert\nabla f(\vx)\Vert^2 = \frac{1}{n^{2}}\Big\Vert\sum_{i=1}^{n}\nabla f_i(\vx)\Big\Vert^2 \\
	\,\,\,=\,\,\, & \frac{1}{n^{2}}\Big\Vert\sum_{i=1}^{n}(\mU^{(i)})^\top\nabla g(\mU^{(i)}\vx)\Big\Vert^2 \\
	\,\,\,=\,\,\, & \frac{1}{n^{2}}\sum_{i=1}^{n}\big\Vert\nabla g(\mU^{(i)}\vx)\big\Vert^2\\
	\overset{\,\,(\ref{asm:PL})\,\,}{\geq} & \frac{2\hat{\mu}}{n^2}\sum_{i=1}^{n}\Big(g(\mU^{(i)}\vx)-\inf\limits_{\vx\in \mathbb{R}^{mn}}g(\mU^{(i)}\vx)\Big)\\
	\overset{(\ref{eq:PL-finite-sum})}{=} & \frac{2\hat{\mu}}{n}\big(f(\vx)-\inf_{\vy\in\BR}f(\vy)\big),
\end{align*}
which means $f$ is $\hat{\mu}/n$-PL. Hence, we finish the proof.
\end{proof}

\subsection{The Lower Bound on Communication Rounds}

The communication complexity of the decentralized optimization depends on the topology of the network.
We let $\fG=\{\fV,\fE\}$ be the graph associated with the network in the problem, where the node set $\fV=\{1,\dots,m\}$ corresponds to the $m$ agents and the edge set $\fE=\{(i,j):\text{agents $i$ and $j$ are connected}\}$ describes the connectivity of the agents.  
We use use the notation ${\rm dia}(\fG)$ to present the diameter of graph $\fG$.

We introduce the ring-lattice graph and its properties as follows.

\begin{definition}[Ring-Lattice Graph \cite{watts1998collective}]\label{def:graph}
    Given any positive integers $m,k$ such that $k$ is even and {$2\leq k < m-1$}, the $k$-regular ring-lattice graph over nodes $\{1,\cdots, m\}$, denoted by $\mathcal{R}_{m,k}$, is an undirected graph in which each node $i \in [m]$ is connected with the set of nodes $\{(i+\ell) \;\mathrm{mod}\;m: \ell\in\BZ, 1\leq |\ell|\leq k/2\}$ where  $(i+\ell) \;\mathrm{mod}\;m$ is defined as 
    \begin{align*}    
    (i+\ell) \;\mathrm{mod}\;m =\begin{cases}
    i+\ell &\text{if $1\leq  i+\ell\leq m$};\\
    i+\ell+m &\text{if $i+\ell<1$};\\
    i+\ell-m &\text{if $i+\ell> m$}.\\
    \end{cases}
    \end{align*}
    Additionally, we use notation $\mL_{m,k}\in\BR^{m\times m}$ to present the Laplacian matrix of the ring-lattice graph $\mathcal{R}_{m,k}$ and $\lambda_{m-1}(\mL_{m,k})$ to present its second smallest eigenvalue.
\end{definition}\vskip0.1cm

\begin{lemma}[{\citep[Lemma 1]{yuan2022revisiting}}]\label{lem:distance}
    For any two nodes $i,j\in[m]$ in the ring-lattice graph~$\fR_{m,k}$, the distance between nodes $i$ and $j$ is  
    \begin{align*}
        \left\lceil \frac{2\min\{j-i, i+m-j\} }{k}\right\rceil.    
    \end{align*}
    In particularly, the diameter of $\mathcal{R}_{m,k}$ is $\lceil {2\lfloor m/2\rfloor}/{k}\rceil=\Theta({m}/{k})$.
\end{lemma}

Recently, \citet{yuan2022revisiting} showed that for all fixed $m\geq 2$ and $\gamma\in[1-\cos(\pi/m),1]$, we can always establish a mixing matrix associated with some ring-lattice graph that has the spectral gap $\gamma$.

\begin{lemma}[{\citep[Theorem 1]{yuan2022revisiting}}]\label{lem:yuankun}
    For any $m\geq2$ and $\gamma\in[1-\cos({\pi}/{m}),1]$, there exists a ring-lattice graph $\mathcal{G}=\{\fV,\fE\}$ with an associated mixing matrix $\mW\in\BR^{m\times m}$ such that $\lvert\fV\rvert=m$, $1-\lambda_2(\mW)=\gamma$, and ${\rm dia(\fG)}=\Omega(1/\sqrt{\gamma})$.
\end{lemma}

Based on above results, we can construct the graph whose nodes contains two disjoint subsets with the specific  distance.

\begin{lemma}\label{lem:big gamma}
    For any $m\geq2$ and $\gamma\in[1-\cos(\pi/m),1]$, there exists a ring-lattice graph $\mathcal{G}=\{\fV,\fE\}$ with associated mixing matrix $\mW\in\BR^{m\times m}$ and $\fV_1,\fV_2\subseteq\fV$ 
    such that~$\lvert\fV\rvert=m$, $\lvert\fV_1\rvert\geq m/3$, $\lvert\fV_2\rvert\geq m/3$, $1-\lambda_2(\mW)=\gamma$, and the distance between $\fV_1$ and~$\fV_2$ satisfies ${\rm dist}(\fV_1,\fV_2)=\Omega(1/\sqrt{\gamma})$.
    Specifically, such graph $\fG$ and matrix $\mW$ can be constructed by
    \begin{align*}
        \mathcal{G}=\begin{cases}
            \mathcal{R}_{m,m-1},& \gamma\in[\max\{1-\cos({\pi}/{m}),1-\cos({\pi}/{9})\},1], \\ 
            \mathcal{R}_{m,k},& \text{otherwise}
        \end{cases}
    \end{align*}
    and
    \begin{align*}
        \mW=\begin{cases}
            \dfrac{\gamma}{m}\mathbf{1}\mathbf{1}^\top+(1-\gamma)\mI,& \gamma\in[\max\{1-\cos({\pi}/{m}),1-\cos({\pi}/{9})\},1],\\[0.1cm]
            \mI-\dfrac{\gamma}{\lambda_{m-1}(\mL_{m,k})}\mL_{m,k},& \text{otherwise},
        \end{cases}    \end{align*}
    where 
    \begin{align*}
    k=2\left\lceil m\sqrt{\frac{3\gamma}{\pi^2(1-\pi^2/12)}}\,\right\rceil.   
    \end{align*}    
\end{lemma}
\begin{proof}
Based on Lemma \ref{lem:yuankun}, there exists a ring-lattice graph $\mathcal{G}=\{\fV,\fE\}$ associated with the mixing matrix $\mW\in\BR^{m\times m}$ such that $\lvert\fV\rvert=m$, $1-\lambda_2(\mW)=\gamma$, and ${\rm dia(\fG)}=\Omega(1/\sqrt{\gamma})$.
Let $\fV_1=\{1,\dots,\lceil m/3\rceil\}$ and $\fV_2=\{\lfloor m/2\rfloor+1,\dots,\lfloor m/2\rfloor+\lceil m/3\rceil\}$, which means~$\lvert\fV_1\rvert\geq m/3$ and $\lvert\fV_2\rvert\geq m/3$.
Based on Lemma \ref{lem:distance}, the distance between node sets $\fV_1$ and $\fV_2$ satisfies
\begin{align*}
    {\rm dist}(\fV_1,\fV_2) &=\left\lceil \frac{2\min\limits_{i\in\fV_1,j\in\fV_2}\{j-i, i+m-j\} }{k}\right\rceil\\
    &\geq\left\lceil \frac{2\min\{\lfloor m/2\rfloor+1-\lceil m/3\rceil, 1+m-\lfloor m/2\rfloor-\lceil m/3\rceil\} }{k}\right\rceil\\
    &\geq\left\lceil \frac{2\min\{ m/2-1/2+1- m/3-2/3, 1+m-m/2- m/3-2/3\}}{k}\right\rceil\\
    &=\left\lceil \frac{{m-1}}{3k}\right\rceil
    =\Omega\left(\frac{m}{k}\right) =\Omega({\rm dia}(\fG))
    =\Omega\left(\frac{1}{\sqrt{\gamma}}\right).
\end{align*} 
\end{proof}

Applying Lemma \ref{lem:big gamma}, we provide the lower bound on communication rounds for our smooth finite-sum decentralized optimization problem. 

\begin{theorem}\label{thm:lower-communication}
    For any $L>0,\bar L>0,m\geq2,n>0,\Delta>0,\gamma\in{[1-\cos(\pi/m),1]}$ and~$\epsilon>0$ with $\bar L\geq L$ and $\epsilon=\fO(\sqrt{\Delta L}\,)$, there exist matrix $\mW\in\BR^{m\times m}$ with spectral gap $\gamma$ and $\bar L$-mean-squared smooth functions $\{f_{i,j}:\mathbb{R}^{d}\rightarrow\mathbb{R}\}_{i,j=1}^{m,n}$ with $d=\fO(\Delta L\epsilon^{-2})$ such that their average $f=\frac{1}{mn}\sum_{i=1}^{m}\sum_{j=1}^{n}f_{i,j}$ is $L$-smooth with $f(\mathbf{0})-f^*\leq\Delta$. 
    In order to find an $\epsilon$-stationary point of the function $f$, any LIFO algorithm needs at least~$\Omega(\Delta L\epsilon^{-2}/\sqrt{\gamma}\,)$ communication rounds.
\end{theorem}
\begin{proof}
According to Lemma \ref{lem:big gamma}, there exists a ring-lattice graph $\mathcal{G}=\{\fV,\fE\}$ with the associated mixing matrix $\mW\in\BR^{m\times m}$ and $\fV_1,\fV_2\subseteq\fV$ such that $\lvert\fV\rvert=m$, $1-\lambda_2(\mW)=\gamma$, $\lvert\fV_1\rvert\geq m/3$, $\lvert\fV_2\rvert\geq m/3$, 
and the distance between the sets $\fV_1$ and $\fV_2$ satisfies~${\rm dist}(\fV_1,\fV_2)=\Omega(1/\sqrt{\gamma})$.

We consider the nonconvex function $f^{\rm nc}_T$ defined in equation (\ref{eq:function-nc}) with 
\begin{align}\label{eq:comm-T}
T=\left\lfloor \frac{\Delta L}{36l_0\epsilon^2}\right\rfloor, \quad\text{where}~~\epsilon\leq\sqrt{\frac{\Delta L}{36l_0}},\qquad\text{and}\qquad l_0=152.    
\end{align}
According to Lemma \ref{lem:scale} with 
\begin{align}\label{eq:comm-alpha-beta}
g(\vx)=f^{\rm nc}_T(\vx), \qquad \alpha=\frac{3l_0\epsilon^2}{L}, \qquad \text{and} \qquad 
\beta=\frac{L}{3l_0\epsilon},    
\end{align}
we can conclude the function $\hat g(\vx)=\alpha f^{\rm nc}_T(\beta\vx)$ is $\alpha\beta^2l_0$-smooth and satisfies
\begin{align}\label{eq:comm-grad}
\|\nabla \hat g(\vx)\|=\alpha\beta\|\nabla f^{\rm nc}_T(\beta\vx)\|    
\end{align}
and
\begin{align}\label{eq:comm-lower-hat-g}
\hat g(\mathbf{0})-\inf_{\vy\in\BR^T}\hat g(\vy) = \alpha \Big(f^{\rm nc}_T(\mathbf{0})-\inf_{\vy\in\BR^T} f^{\rm nc}_T(\vy)\Big) \leq 12\alpha T, 
\end{align}
where the inequality is based on Lemma \ref{cor:f^nc}(a).

We then construct the hard instance based on the decomposition 
$f_T^{\rm nc}=f_T^{\rm nc,1} + f_T^{\rm nc,2}$ shown in equations (\ref{eq:nc1nc2-decompose}) and (\ref{eq:nc1nc2}).
Specifically, we take $f_{i,j}$ as 
\begin{align}\label{eq:comm-instance}
f_{i,j}(\vx)=
\begin{cases}
\dfrac{\alpha m}{\lvert\fV_1\rvert} f_T^{\rm nc,1}(\beta\vx),   &{i\in \fV_1}, \\[0.35cm]
\dfrac{\alpha m}{\lvert\fV_2\rvert} f_T^{\rm nc,2}(\beta\vx),   &{i\in \fV_2}, \\[0.3cm]
0,   &{\text{otherwise}},
\end{cases}
\end{align}
where $f_T^{\rm nc,1}$ and $f_T^{\rm nc,2}$ are defined in equation (\ref{eq:nc1nc2}). According to Lemma \ref{lem:function-nc-decompose}, Lemma~\ref{lem:scale}, and equation (\ref{eq:comm-instance}), each individual function $f_{i,j}$ is $\alpha\beta^2ml_0/\min\{\lvert\fV_1\rvert,\lvert\fV_2\rvert\}$-smooth.
Therefore, functions $\{f_{i,j}\}_{i,j=1}^{m,n}$ are $\alpha\beta^2ml_0/\min\{\lvert\fV_1\rvert,\lvert\fV_2\rvert\}$-mean-squared smooth.

Additionally, the definition of $f_{i,j}$ in equation (\ref{eq:comm-instance}) means
\begin{align}\label{eq:comm-fun}
f(\vx)=\frac{1}{mn}\sum_{i=1}^m\sum_{j=1}^nf_{i,j}(\vx)=\hat g(\vx),
\end{align}
which is $\alpha\beta^2l_0$-smooth.

The parameter settings in equations (\ref{eq:comm-T}) and (\ref{eq:comm-alpha-beta}) imply
\begin{align*}
	\alpha\beta^2l_0=\frac{L}{3}\leq L, \quad \frac{\alpha\beta^2ml_0}{\min\{\lvert\fV_1\rvert,\lvert\fV_2\rvert\}} = \frac{mL}{3\min\{\lvert\fV_1\rvert,\lvert\fV_2\rvert\}}\leq L \leq \bL, \quad\text{and}\quad  
    12\alpha T\leq\Delta.
\end{align*}
Therefore, the function $f=\frac{1}{mn}\sum_{i=1}^m\sum_{j=1}^nf_{i,j}$ is $L$-smooth and satisfies $f(\mathbf{0})-f^*\leq\Delta$ and the function set $\{f_{i,j}\}$ is $\bar L$-mean-squared smooth, which satisfies our requirements.

Now we show that any LIFO algorithm require at least 
\begin{align}
(T-1){\rm dist}(\fV_1,\fV_2)=\Omega\left(\frac{L \Delta}{\sqrt{\gamma}\epsilon^2}\right)     
\end{align}
communication rounds to to achieve an $\epsilon$-stationary point of $f(\cdot)$.

According to Corollary \ref{cor:f^nc}, equations (\ref{eq:comm-alpha-beta}), (\ref{eq:comm-grad}) and (\ref{eq:comm-fun}), we have
\begin{align*}
   \|\nabla f(\vx)\| >\alpha\beta=\epsilon.
\end{align*}
for all $\vx=[x_1,\dots,x_T]^\top\in\BR^d$ such that $\vx_T=0$.

We let
\begin{align}\label{dfn:nnz}
\begin{split}    
    & {\rm nnz}(s,i) \triangleq
     \max\big\{t\in\{0,1,\dots,T\}: \\
     & \qquad\qquad\qquad\qquad\text{there exists $\vy=[y_1,\dots,y_T]^\top\in\mathcal{M}_i^s$ such that $y_t\neq0$}\big\},
\end{split}
\end{align}
where we denote $y_0=1$.

Consider the definitions of $f_{i,j}$, $f_T^{\rm nc, 1}$ and $f_T^{\rm nc, 2}$ in equations (\ref{eq:nc1nc2})
and (\ref{eq:comm-instance}).
Notice that the fact $\Psi(0)=\Psi'(0)=0$ implies the partial derivatives of 
\begin{align*}
\Psi(-x_{k-1})\Phi(-x_k)-\Psi(x_{k-1})\Phi(-x_k)     
\end{align*}
with respect to $x_{k-1}$ or $x_k$ are zero when $x_{k-1}=x_k=0$.
This means the computation of any LIFO algorithm holds:
\begin{enumerate}
    \item If $i\in\fV_1$ and ${\rm nnz}(s,i)$ is even, one step of local computation can increase at most one dimension for memory of node~$i$.
    \item If $i\in\fV_2$ and ${\rm nnz}(s,i)$ is odd, one step of local computation can increase at most one dimension for memory of node $i$. 
    \item Otherwise, one step of local computation cannot increase the dimension for memory of node~$i$.
\end{enumerate}
In summary, we have
\begin{align}\label{eq:nnz1}
    {\rm nnz}(s+1,i) \leq \begin{cases}
        {\rm nnz}(s,i)+1,  & \text{if}~~i\in\fV_1,\  {\rm nnz}(s,i)\equiv 0~({\rm mod}~2), \\
        {\rm nnz}(s,i)+1, & \text{if}~~i\in\fV_2,\  {\rm nnz}(s,i)\equiv 1~({\rm mod}~2), \\
        {\rm nnz}(s,i), &\text{otherwise}.
    \end{cases}
\end{align}

We consider the cost to reach the second coordinate from the initial status that~$\fM_i^0=\{\vzero\}$ for all $i\in[m]$.
According to equation (\ref{eq:nnz}), we need to let a node in $\fV_2$ reach the first coordinate, which requires at least one local computation step on some node in $\fV_1$ first. 
Then, according to definitions of LIFO algorithm (Definition~\ref{dfn:LIFO}), one must perform at least ${\rm dist}(\fV_1,\fV_2)$ local communication rounds for a node in $\fV_2$ to receive the information of the first coordinate from some node in $\fV_1$. 
After above rounds, we can perform at least 1 computation on nodes in $\fV_2$ to reach the second coordinate.
In summary, to reach the second coordinate requires at least ${\rm dist}(\fV_1,\fV_2)$ local communication rounds.
Similarly, to reach the $k$-th coordinate, any LIFO algorithm must perform at least~$(k-1){\rm dist}(\fV_1,\fV_2)$ local communication rounds.
Therefore, to achieve the $\epsilon$-stationary point,  we require $x_T\neq 0$ that needs at least 
\begin{align*}
(T-1){\rm dist}(\fV_1,\fV_2)=\Omega\left(\frac{L\Delta}{\sqrt{\gamma}\epsilon^{2}}\right)    
\end{align*}
communications rounds.
\end{proof}

\begin{remark}
The results of Corollary \ref{cor:complexity-general} and Theorem \ref{thm:lower-communication} show that the complexity on communication rounds of \DEAREST~is near-optimal for all $\gamma\in{[1-\cos(\pi/m),1]}$ and~$\bL\geq L$.
As pointed by \citet{yuan2022revisiting}, such result is more general than the lower bounds established by the linear graph \cite{hendrikx2021optimal,scaman2018optimal,schmidt2017minimizing,lu2021optimal} that only work in the case of~$\gamma=1-\cos(\pi/m)$, 
since $\cos(\pi/m)$ approaches to 1 for large $m$.
More importantly, our results indicate the communication complexity in distributed nonconvex optimization  essentially depends on the global smoothness parameter $L$, rather than the mean-squared or the local smoothness parameters which used in existing work \cite{sun2020improving,xin2022fast,zhan2022efficient,li2022destress,metelev2024decentralized,lu2021optimal,yuan2022revisiting}.
\end{remark}

\subsection{The Lower Bound on LIFO calls}

It is natural that the lower bound on the LIFO calls for decentralized finite-sum optimization problem~(\ref{prob:main}) with local functions (\ref{func:local}) can be established by considering the IFO complexity of the finite-sum optimization problem on single machine with~$N=mn$ individual functions.
Unlike existing lower bounds on IFO complexity $\bL$~\cite{fang2018spider,zhou2019lower,li2021page,lan2018optimal,agarwal2015lower,woodworth2016tight} only consider the mean-squared smoothness parameter, our constructions address both~$L$ and $\bL$, which is formally presented as follows.

\begin{theorem}\label{thm:LFO}
        For all $\bar L>0$, $L>0$, $N>0$, $\Delta>0$ and $\epsilon>0$ with $\bar L\geq L$ and $\epsilon=\fO((\min\{L,\bar L/\sqrt{N}\}\Delta)^{1/2})$, there exist $\bar L$-mean-squared smooth functions \mbox{$\{\tf_i:\BR^d\to\BR\}_{i=1}^N$} such that the function $\tf=\frac{1}{N}\sum_{i=1}^{N}\tf_i$ is $L$-smooth and satisfies that $\tf(\vzero)-\inf_{\vy\in\BR^d}\tf(\vy)\leq \Delta$. In order to find an $\epsilon$-stationary point of $\tf$, any IFO algorithm (defined in Appendix \ref{appendix:IFO}) needs at least $\Omega\big(N+ \min\{NL,\sqrt{N}\bL\}\Delta\epsilon^{-2}\big)$ IFO calls.
\end{theorem}
\begin{proof}
We consider the nonconvex function $f^{\rm nc}_T$ defined in equation (\ref{eq:function-nc}) with 
\begin{align}\label{eq:ifo-T}
T=\left\lfloor \frac{\min\{L,\bar L/\sqrt{N}\}\Delta}{24l_0\epsilon^2}\right\rfloor, \quad \text{where}~~\epsilon\leq \sqrt{\frac{\min\{L,\bar L/\sqrt{N}\}\Delta}{24l_0}}~~\text{and}~~l_0=152.    
\end{align}  
According to Lemma \ref{lem:scale} with 
\begin{align}\label{eq:ifo-alpha-beta}
g(\vx)=f^{\rm nc}_T(\vx), \quad \alpha = \frac{2l_0\epsilon^2}{\min\big\{L,\bar L/\sqrt{N}\big\}},\quad \text{and} \quad 
\beta=\frac{\min\big\{L\sqrt{N},\bar L\big\}}{\sqrt{2}l_0\epsilon},    
\end{align}
we can conclude the function $\hat g(\vx)=\alpha f^{\rm nc}_T(\beta\vx)$ is $\alpha\beta^2l_0$-smooth and holds
\begin{align}\label{eq:lower-grad-hat-g}
    \|\nabla \hat g(\vx)\|=\alpha\beta\|\nabla f^{\rm nc}_T(\beta\vx)\| 
\end{align}
and
\begin{align}\label{eq:lower-hat-g}
\hat g(\mathbf{0})-\inf_{\vy\in\BR^T}\hat g(\vy) = \alpha \Big(f^{\rm nc}_T(\mathbf{0})-\inf_{\vy\in\BR^T} f^{\rm nc}_T(\vy)\Big) \leq 12\alpha T, 
\end{align}
where the inequality is based on Lemma \ref{cor:f^nc}(a).

We construct the hard instance according to Lemma \ref{lem:orthogonal transformation} with $g(\vx)=\hat g(\vx)$, which results the functions
\begin{align}\label{eq:ifo-instance-1}
     \tf_i(\vx)=\alpha f^{\rm nc}_T(\beta \mU^{(i)}\vx) 
     \qquad\text{and}\qquad
     \tf(\vx)=\frac{1}{N}\sum_{i=1}^{N}\tf_i(\vx) 
\end{align}
such that $\{\tf_i\}_{i=1}^N$ is $\alpha\beta^2l_0/\sqrt{N}$-mean-squared smooth and $\tf$ is $\alpha\beta^2l_0/N$-smooth with 
\begin{align*}
\tf(\vx^0)-\tf^*=\hat g(\mathbf{0})-\inf_{\vy\in\BR^T}\hat g(\vy)
\overset{(\ref{eq:lower-hat-g})}{\leq} 12\alpha T,     
\end{align*}
where $\tf^*=\inf_{\vy\in\BR^T}\tf(\vy)$.

The settings of $\alpha$, $\beta$ and $T$ in equations (\ref{eq:ifo-T}) and (\ref{eq:ifo-alpha-beta}) imply
\begin{align*}
	\frac{\alpha\beta^2 l_0}{\sqrt{N}}\overset{(\ref{eq:ifo-alpha-beta})}{\leq}\bar L, \qquad \frac{\alpha\beta^2l_0}{N}\overset{(\ref{eq:ifo-alpha-beta})}{\leq} L, \qquad\text{and}\qquad  
    12\alpha T \overset{(\ref{eq:ifo-T})}{\leq}\Delta.
\end{align*}
Therefore, the function set 
$\{\tf_i\}_{i=1}^N$ is $\bar L$-average smooth and the function $\tf$ is $L$-smooth with $\tf(\vx^0)-\tf^*\leq\Delta$, which satisfies our requirements.

Now we show that any IFO algorithm require at least 
$NT/2$ IFO calls to achieve an $\epsilon$-stationary point $\hat\vx$ of the function $f$.
According to equation (\ref{eq:lower-grad-hat-g}), Lemma \ref{cor:f^nc}(d) and Lemma \ref{lem:scale}, for all~$\vx\in\BR^{NT}$ with ${\rm supp}(\mU^{(j)}\vx)\subseteq\left\{1,2,\cdots,T-1\right\}$, we have
\begin{align}\label{eq:ifo-grad-upper}
   \|\nabla f_j(\vx)\| = \big\|\alpha\beta(\mU^{(j)})^\top \nabla f_T^{\rm nc}(\beta\mU^{(j)}\vx)\big\| = \alpha\beta\big\|\nabla f_T^{\rm nc}(\beta\mU^{(j)}\vx)\big\| > \alpha\beta.
\end{align} 

We consider the vector $\vx\in\BR^{NT}$ which is achieved by an IFO algorithm with at most~$\lfloor NT/2 \rfloor$ IFO calls.
The zero-chain property shown in Lemma \ref{cor:f^nc}(c) implies such vector $\vx$ has at most~$\lfloor NT/2 \rfloor$ non-zero entries.
We partition $\vx\in\BR^{NT}$ into $N$ vectors $\vy^{(1)},\dots,\vy^{(N)}\in\BR^{T}$ such that $\vy^{(j)}=\mU^{(j)}\vx\in\BR^{T}$,
then there at least $N/2$ vectors in $\{\vy^{(j)}\}_{j=1}^N$ such that each of them has at least one zero entry.
The zero-chain property (Lemma \ref{cor:f^nc}(c)) means
there exists index set $\fI\subseteq [N]$ with $|\fI|\geq N/2$ such that each~$j\in\fI$ satisfies ${\rm supp}(\vy^{(j)})\subseteq\left\{1,2,\cdots,T-1\right\}$.
Therefore, we have
\begin{align*}
& \|\nabla f(\vx)\|=
\frac{1}{N}\sqrt{\sum_{i=1}^N\|\nabla f_{i}(\vx)\|^2} \\
\geq & \frac{1}{N}\sqrt{\sum_{j\in\fI}\|\nabla f_j(\vx)\|^2}
\overset{(\ref{eq:ifo-grad-upper})}{>}\frac{1}{N}\sqrt{\frac{N}{2}}\,\alpha\beta=\epsilon.
\end{align*}
Hence, any IFO algorithm requires at least 
\begin{align*}
 \left\lfloor \frac{NT}{2} \right\rfloor+1=\Omega\left(\frac{\min\{NL,\sqrt{N}\bar L\}\Delta}{\epsilon^2}\right)  
\end{align*}
IFO calls to achieve an $\epsilon$-stationary point of function $\tf(\cdot)$ defined in equation (\ref{eq:ifo-instance-1}).

Then, we consider the lower bound dominated by $N$ and construct the another hard instance.
We assume that $\epsilon\leq\sqrt{L\Delta/2}$.
We follow the functions provided by \citet{li2021page}, i.e., define $\tf_i:\BR^d\to\BR$ and $\tf:\BR^d\to\BR$ as
\begin{align}\label{eq:ifo-instance-2}
    \tf_i(\vx)=c\langle \vv_i,\vx\rangle+\frac{L}{2}\|\vx\|^2
    \qquad\text{and}\qquad
     \tf(\vx)=\frac{1}{N}\sum_{i=1}^{N} \tf_i(\vx).
\end{align}
for all $i\in[N]$, where $\vv_i=\ve_i$, $c=2\epsilon\sqrt{N}$ and $d=N$.
We can verify that for all~$\vx,\vy\in\BR^d$, it holds
\begin{align*}
    \|\nabla \tf(\vx)-\nabla \tf(\vy)\| = \Norm{\left(\frac{c}{N}\sum_{i=1}^{N}\vv_i+L\vx\right)-\left(\frac{c}{N}\sum_{i=1}^{N}\vv_i+L\vy\right)} = L\|\vx-\vy\|
\end{align*}
and
\begin{align*}
    & \frac{1}{N}\sum_{i=1}^{N}\|\nabla \tf_i(\vx) - \tf_i(\vy)\|^2 \\
    = & \frac{1}{N}\sum_{i=1}^{N}\|(c\vv_i+L\vx)-(c\vv_i+L\vy)\|^2 \\
    =  & L^2\|\vx-\vy\|^2 \leq \bar L^2\|\vx-\vy\|^2.
\end{align*}
Therefore, the function $\tf$ is $L$-smooth and the functions $\{\tf_i\}_{i=1}^N$ are $\bar L$-average smooth. 
Since we suppose the IFO algorithm start with the initial point $\vx^0=\vzero$, it holds
\begin{align*}
     \tf(\vx^0) - \inf_{\vy\in\BR^d} \tf(\vy) =0-\left(\frac{c}{N}\sum_{i=1}^{N}\langle\vv_i,\vx^*\rangle+\frac{L}{2}\|\vx^*\|^2\right)=\frac{c^2d}{2LN^2}\leq\Delta,
\end{align*}
where $\vx^*=-\frac{c}{LN}\sum_{i=1}^{N}\vv_i$ is the minimizer of $\tf$.
Following Theorem 2 of \citet{li2021page}, we can show that any IFO algorithm requires at least IFO calls of
$\Omega(N)$ to achieve an $\epsilon$-stationary point of the function $\hat f$ defined in equation (\ref{eq:ifo-instance-2}).

Combining results of above two hard instances, we achieve the lower bound on the IFO complexity of
\begin{align*}
    \Omega\left(N+\frac{\min\{NL,\sqrt{N}\bar L\Delta\}}{\epsilon^2}\right).
\end{align*}
\end{proof}

\begin{remark}
Applying Theorem \ref{thm:LFO} with $N=mn$ means the LIFO complexity shown in Corollary \ref{cor:complexity-general} is optimal.
\end{remark}

\subsection{The Lower Bound on Computation Rounds}

As pointed in Remark \ref{remark:not=proportion}, the communication rounds are not always proportion to the LIFO calls.
Therefore, we detailed analyze this lower bound as follows. 

\begin{theorem}\label{thm:lower-computation}
        For any $\bar L>0$, $L>0$, $m>0$, $n>0$, $\Delta>0$, and $\epsilon>0$ with $\bL\geq L$ and $\epsilon=\fO\big(\big(\min\{L,\bar L/\sqrt{N}\}\Delta\big)^{1/2}\big)$, there exist $\bar L$-mean-squared smooth functions $\{f_{i,j}:\BR^d\to\BR\}$ such that the function $f=\frac{1}{mn}\sum_{i=1}^{m}\sum_{j=1}^{n}f_{i,j}$ is $L$-smooth with $f(\vzero)-f^*\leq \Delta$. In order to find an $\epsilon$-stationary point of $f$, any LIFO algorithm needs at least $\Omega(n+(L+\min\{nL, \sqrt{n/m}\bar L\})\epsilon^{-2})$ computation rounds.
\end{theorem}
\begin{proof}
    We first show the lower bound in the view of linear speed-up.
    According to Theorem \ref{thm:LFO} with $N=mn$ and $\epsilon=\fO((\min\{L,\bar L/\sqrt{N}\}\Delta)^{1/2})$, 
    there exist $\bar L$-mean-squared smooth functions $\{f_{i,j}\}_{i=1,j=1}^{m,n}$ such that the function $f=\frac{1}{mn}\sum_{i=1}^{m}\sum_{j=1}^{n}f_{i,j}$ is $L$-smooth and satisfies $f(\vx^0)-f^*\leq \Delta$. 
    In order to find an $\epsilon$-stationary point of problem $f$, any IFO algorithm needs at least 
    \begin{align}\label{eq:IFO-computation}
        \Omega\left(mn+\frac{\min\{mnL,\sqrt{mn}\bar L\}}{\epsilon^2}\right).
    \end{align}
    IFO calls. 
    For the distributed setting with $m$ agents, any LIFO algorithm can perform at most $m$ LIFO calls in per computation rounds.
    Therefore, the IFO lower bound in equation (\ref{eq:IFO-computation}) directly implies the lower bound on the computation rounds of 
    \begin{align}\label{eq:IFO-computation2}
        \Omega\left(\frac{mn}{m}+\frac{\min\{mnL,\sqrt{mn}\bar L\}}{m\epsilon^2}\right) 
        = \Omega\left(n+\frac{\min\{nL,\sqrt{n/m}\bar L\}}{\epsilon^2}\right),
    \end{align}
    which nearly matches the upper bound (\ref{eq:complexity-compuation-rounds}) in Corollary \ref{cor:complexity-general} when $\bL/L<\sqrt{m/n}$.
    
The remain of the proof only needs to provide a hard instance with the lower bound of $\Omega(L\epsilon^{-2}\Delta)$ when $\bL/L<\sqrt{m/n}$.
We consider the nonconvex function $f^{\rm nc}_T$ defined in equation (\ref{eq:function-nc}) with 
\begin{align}\label{eq:ifo-T2}
T=\left\lfloor \frac{L\Delta}{12l_0\epsilon^2}\right\rfloor, \quad \text{where}~~\epsilon\leq \sqrt{\frac{L\Delta}{12l_0}}~~\text{and}~~l_0=152.    
\end{align}
According to Lemma \ref{lem:scale} with 
\begin{align}\label{eq:ifo-alpha-beta2}
g(\vx)=f^{\rm nc}_T(\vx), \qquad \alpha = \frac{l_0\epsilon^2}{L} \qquad \text{and} \qquad 
\beta=\frac{L}{l_0\epsilon},    
\end{align}
we can conclude the function $\hat g(\vx)=\alpha f^{\rm nc}_T(\beta\vx)$ is $\alpha\beta^2l_0$-smooth and satisfies
\begin{align}\label{eq:lower-grad-hat-g2}
\|\nabla \hat g(\vx)\|=\alpha\beta\|\nabla f^{\rm nc}_T(\beta\vx)\|    
\end{align}
and
\begin{align}\label{eq:lower-hat-g2}
\hat g(\mathbf{0})-\inf_{\vy\in\BR^T}\hat g(\vy) = \alpha \left(f^{\rm nc}_T(\mathbf{0})-\inf_{\vy\in\BR^T} f^{\rm nc}_T(\vy)\right) \leq 12\alpha T, 
\end{align}
where the inequality is based on Lemma \ref{cor:f^nc}(a).

We construct the hard instance as
\begin{align}\label{eq:difo-instance-2}
     f_{i,j}(\vx)=\hat g(\vx)
     \qquad\text{and}\qquad
     f(\vx)=\frac{1}{mn}\sum_{i=1}^{m}\sum_{j=1}^{n}f_{i,j}(\vx) 
\end{align}
such that $\{f_{i,j}\}_{i=1,j=1}^{m,n}$ is $\alpha\beta^2l_0$-mean-squared smooth and $f$ is $\alpha\beta^2l_0$-smooth with 
\begin{align*}
f(\vx^0)-f^*=\hat g(\mathbf{0})-\inf_{\vy\in\BR^T}\hat g(\vy)
\overset{(\ref{eq:lower-hat-g2})}{\leq} 12\alpha T.     
\end{align*}
The setting of $\alpha$, $\beta$ and $T$ implies
\begin{align*}
	\alpha\beta^2 l_0\overset{(\ref{eq:ifo-alpha-beta2})}{=} L \qquad\text{and}\qquad  
    12\alpha T \overset{(\ref{eq:ifo-T2})}{\leq}\Delta.
\end{align*}
Therefore, the function set 
$\{f_{i,j}\}_{i=1,j=1}^{m,n}$ is $L$-average smooth and the function $f$ is $L$-smooth with $f(\vx^0)-f^*\leq\Delta$, which satisfies our requirements.

Now we show that any LIFO algorithm require at least 
$T-1$ computation rounds to achieve an $\epsilon$-stationary point of the function $f(\cdot)$ at each node.
According to equations (\ref{eq:lower-grad-hat-g2})--(\ref{eq:difo-instance-2}),
Lemmas \ref{cor:f^nc}(d) and \ref{lem:scale}, for all vector $\vx_i\in\BR^{T}$ that satisfies ${\rm supp}(\vx_i)\subseteq\left\{1,2,\cdots,T-1\right\}$, we have
\begin{align}\label{eq:ifo-grad-upper2}
   \|\nabla f(\vx_i)\| = \big\|\alpha\beta \nabla f_T^{\rm nc}(\beta\vx_i)\big\| > \alpha\beta.
\end{align}

We consider the vectors $\vx_1,\dots,\vx_m\in\BR^{T}$ which are achieved by an LIFO algorithm with at most $T-1$ computation rounds.
Lemma \ref{cor:f^nc}(c) implies such $\vx_i$ has at most $T-1$ non-zero entries, that is ${\rm supp}(\vx_i)\subseteq\left\{1,2,\cdots,T-1\right\}$ for all~$i\in[m]$.
Therefore, we have
\begin{align*}
\|\nabla f(\vx_i)\| > \alpha\beta=&\epsilon.
\end{align*}
Hence, any LIFO algorithm requires at least 
\begin{align}\label{eq:IFO-computation3}
 T=\Omega\left(\frac{L\Delta}{\epsilon^2}\right)  
\end{align}
computation rounds to achieve an $\epsilon$-stationary point of function $f$. Combing results of~(\ref{eq:IFO-computation2}) and (\ref{eq:IFO-computation3}), we finish the proof.
\end{proof}

\begin{remark}
In a very recent work, \citet{metelev2024decentralized} claimed the lower bounds on the communication rounds and the computation rounds with respect to local smoothness parameters $L_\ell$ and $\bL_\ell$ (see Assumptions \ref{asm:smooth-local} and \ref{asm:smooth-local-mean-squared}) respectively. 
However, their analysis essentially only obtains the lower bound on the computation rounds of~$\Omega(nL_\ell\Delta\epsilon^{-2})$ \cite[at the end of page 45]{metelev2024decentralized}, and the desired lower bound of $\Omega(n+\sqrt{n}\bL_\ell\epsilon^{-2})$ shown in their statement \cite[Corollary 4.6]{metelev2024decentralized} implicitly requires the additional assumption of~$\bL_\ell/L_\ell\leq \fO(\sqrt{n}\,)$.
Recall that the discussion in Remark~\ref{prop:smooth} shows the ratio $\bL_\ell/L_\ell$ (consider $\bL/L$ when $m=1$) may be larger than  $\Omega(\sqrt{n}\,)$, which is not included in the analysis of \citet{metelev2024decentralized}.
In contrast, all of our theorems for lower bounds are valid for any~$L>0$ and $\bL>0$ such that $\bL\geq L$, which is general since it always holds for the tight $L$ and $\bL$ (see Remark  \ref{prop:smooth}).
\end{remark}

\section{The Results under the PL Condition}\label{sec:PL}

The PL condition (Assumption \ref{asm:PL}) suggests the function value gap is dominated by the square of gradient norm, which avoids the hardness of finding global solution in general smooth nonconvex optimization \cite{nemirovskij1983problem} and leads to the gradient descent method linearly converging to the global minimum without the convexity~\cite{karimi2016linear,polyak1963gradient,lojasiewicz1963topological}.
It covers a lot of popular applications, such as deep neural networks~\cite{liu2022loss,allen2019convergence,zeng2018global}, reinforcement learning~\cite{fazel2018global,agarwal2021theory,mei2020global,yuan2022general}, optimal control~\cite{bu2019lqr,fatkhullin2021optimizing} and matrix recovery~\cite{hardt2016identity,li2018algorithmic,bi2022local}.

For the single machine scenario, \citet{reddi2016stochastic,lei2017non} considered the finite-sum optimization under the PL condition by proposing SVRG-type methods \cite{johnson2013accelerating}, which achieves the $\epsilon$-suboptimal solution with at most $\fO((N+N^{2/3}\bkappa)\ln(1/\epsilon))$ IFO calls, where $N$ is the number of individual functions and $\bkappa$ is defined in equation~(\ref{def:kappa}).
Later, \citet{zhou2019faster,wang2019spiderboost,li2021page} improved the IFO upper bound to~$\fO((N+\sqrt{N}\bkappa)\ln(1/\epsilon))$ by stochastic recursive gradient estimator.
Additionally, \citet{yue2023lower} established the tight lower bound on the exact first-order oracle complexity of~$\Omega(\kappa\ln(1/\epsilon))$ for minimizing the PL function, where $\kappa$ is defined in equation~(\ref{def:kappa}).

In this section, we show the proposed \DEAREST~(Algorithm \ref{alg:DEAREST}) also achieves the near optimal complexity bounds for the decentralized smooth finite-sum problem under the PL condition.
We present the complexity analysis for \DEAREST~under the PL condition in Section~\ref{sec:PL-upper} and provide the corresponding lower bounds in Section~\ref{sec:PL-lower}, which extends and improves the results in our conference paper \cite{bai2024complexity} in the following aspects:
\begin{enumerate}[itemsep=3pt,topsep=2pt]
    \item[(a)] This manuscript considers the finite-sum structure (\ref{func:local}) in local functions, while our conference paper \cite{bai2024complexity} only considers the oracle of accessing the exact local gradient which can be regarded as the special case of $n=1$.
    \item[(b)] This manuscript distinguishes the difference between $L$ and $\bL$ in the algorithm design and analysis, while our conference paper \cite{bai2024complexity} only addresses the mean-squared smoothness parameter $\bL$.
    \item [(c)] This manuscript  proves the algorithm can find $\vx_i^T$ such that $\BE[f(\vx^T_i)-f^*] \leq \epsilon$ for every agent, while our conference paper \cite{bai2024complexity} only shows  $\BE[f(\vx^{\rm out})-f^*] \leq \epsilon$, where $\vx^{\rm out}$ is uniformly sampled from vectors $\{\vx_i^T\}_{i=1}^m$ on all $m$ agents.
\end{enumerate}

We summarize the theoretical results in this section and our conference paper in Tables \ref{table:pl-1} and \ref{table:pl-2}. To the best of our knowledge, no previous work specifically studies the complexity of decentralized optimization problem under the PL condition.

\begin{table}[t]
\caption{We present the communication rounds and the computation rounds for finding $\epsilon$-suboptimal solutions under the PL condition, where $\Delta=f(\bx^0)-f^*$ and $\vx^0\in\BR^d$ is the initial point of the algorithm.}\label{table:pl-1}
\begin{tabular}{ccccc}
    \hline
    Methods & \#Communication & \#Computation  &  Reference
    \\\hline\hline\addlinespace
    \DGDGT & $\fO\left(\dfrac{\kappa\ln(\Delta/\epsilon)}{\sqrt{\gamma}}\right)$ 
    & $\fO\left(n\kappa\ln\bigg(\dfrac{\Delta}{\epsilon}\bigg)\bigg)\right)$  
    & \citet{bai2024complexity}   \\\addlinespace
    \DRONE & $\fO\left(\dfrac{\bkappa\ln(\Delta/\epsilon)}{\sqrt{\gamma}}\right)$ 
    & $\fO\bigg(n\bkappa\ln\bigg(\dfrac{\Delta}{\epsilon}\bigg)\bigg)$ 
    & \citet{bai2024complexity}  \\\addlinespace 
    \DEAREST 
    & $\tilde\fO\left(\dfrac{\kappa\ln(\Delta/\epsilon)}{\sqrt{\gamma}}\right)$ 
    & $\tilde\fO\bigg(\bigg(n + \kappa + \min\bigg\{n\kappa, \sqrt{\dfrac{n}{m}}\,\bar\kappa\bigg\}\bigg) \ln\bigg(\dfrac{\Delta}{\epsilon}\bigg)\bigg)$ 
    & Corollary \ref{cor:PL-1}  \\    
    \addlinespace \hline  \addlinespace
    \!Lower Bounds
    & $\Omega\left(\dfrac{\kappa\ln(\Delta/\epsilon)}{\sqrt{\gamma}}\right)$ 
    & $\Omega\bigg(n+\bigg(\kappa+\min\bigg\{n\kappa, \sqrt{\dfrac{n}{m}}\,  \bkappa\bigg\}\bigg)\ln\bigg(\dfrac{\Delta}{\epsilon}\bigg)\bigg)$ 
    & This work  \\  
    \addlinespace \hline 
\end{tabular} 
\end{table}

\begin{table}[t]
\caption{We present the number of LIFO calls for finding $\epsilon$-suboptimal solutions under the PL condition, where $\Delta=f(\bx^0)-f^*$ and $\vx^0\in\BR^d$ is the initial point of the algorithm.}\label{table:pl-2}
\begin{tabular}{ccccc}
    \hline
    Methods & \#LIFO &   Reference
    \\\hline\hline\addlinespace
    \DGDGT 
    & $\fO\left(mn\kappa\ln\left(\dfrac{\Delta}{\epsilon}\right)\right)$ 
    & \citet{bai2024complexity}  \\ \addlinespace   
    \DRONE 
    & $\fO\left(\big(mn+\sqrt{m}n\bkappa\big)\ln\left(\dfrac{\Delta}{\epsilon}\right)\right)$ 
    & \citet{bai2024complexity}  \\ \addlinespace 
    \DEAREST 
    & $\fO\left(\left(mn + \min\left\{mn\kappa, \sqrt{mn}\,\bar\kappa\right\}\right)\ln\left(\dfrac{\Delta}{\epsilon}\right)\right)$ & Corollary \ref{cor:PL-1}  \\\addlinespace \hline  \addlinespace
    Lower Bounds
    & $\Omega\left(mn + \min\left\{mn\kappa, \sqrt{mn}\,\bar\kappa\right\}\ln\left(\dfrac{\Delta}{\epsilon}\right)\right)$
    & This work  \\  
    \addlinespace \hline 
\end{tabular} 
\end{table}

\subsection{The Complexity Analysis under the PL Condition}\label{sec:PL-upper}

The complexity analysis for \DEAREST~(Algorithm \ref{alg:DEAREST}) under the PL condition also distinguish the between~$L$ and $\bL$ ($\kappa$ and $\bkappa$). 
We formally present the results as follows.

\begin{theorem}\label{thm:PL-1}
Under Assumptions \ref{asm:lower-bounded}--\ref{asm:smooth-mean-squared} and \ref{asm:PL}--\ref{asm:W} with $\mu\!<\!L\!\leq\!\bar L$, we run Algorithm \ref{alg:DEAREST} with
\begin{align*}
\small\begin{split}    
    & \eta = \frac{1}{8L}, \qquad
    b = \left\lceil\frac{\bL\min\left\{\bkappa, \sqrt{mn}\,\right\}}{L}\right\rceil, \qquad
    p = \min \left\{\frac{\bL}{L}\max\left\{\frac{1}{\bkappa}, \frac{1}{\sqrt{mn}}\right\},1\right\}, 
     \\
    & T = \left\lceil16\kappa\ln\left(\frac{2\Delta}{\epsilon}+1\right)\right\rceil,~~ K=\fO\left(\frac{\ln(mn\kappa\bL)}{\sqrt{\gamma}}\right),
    ~~\text{and}~~
    \hK=\fO\left(\frac{\ln(mn\bL/(L\epsilon))}{\sqrt{\gamma}}\right),
\end{split}    
\end{align*}
where $\Delta=f(\bx^0)-f^*$. 
Then the output satisfies $\BE[f(\vx^T_i)-f^*] \leq \epsilon$ for all $i\in[m]$.
\end{theorem}

\begin{proof}
\textbf{Part I:} We first consider the case of $\bL<\sqrt{mn}L$. 
Following the derivation of equation (\ref{eq:recursion-Phi-0}) in the analysis of general case and the PL condition (\ref{eq:PL-condition}), we have
\begin{align}\label{eq:PL-decay}
\small\begin{split}
  & \BE[\Phi^{t+1}] \\
\leq & \BE\left[f(\bx^t) - f^* - \frac{\eta}{2}\Norm{\nabla f(\bx^t)}^2\right]  + \left(\eta + \frac{2(1-p)\eta}{p}\right)\BE[U^t]   \\
& + \left(\frac{(1-p)\eta}{m^3n^3bp} + 528\rho^2m^3n^3\bar L^2\eta^3\right) \BE[V^t] \\
& +  \left(n \bL^2\eta   + \frac{18(1-p)m^2n^3\bar L^2\eta}{p} + \frac{36(1-p)\bL^2 \eta}{mbp} + \frac{264\rho^2(27m^3n^3\bL^2\eta^2+2)m^2n^3\bar L^2\eta}{p}\right) \BE[C^t] \\
& - \left(\frac{1}{2\eta}-\frac{L}{2}
- \frac{15(1-p)\bL^2\eta}{bp}
- \frac{2376\rho^2m^6n^6\bar L^4\eta^3}{p}\right)\BE\Norm{\bx^{t+1}-\bx^t}^2 \\
\leq & \BE\left[\left(1-\frac{\eta\mu}{2}\right)(f(\bx^t) - f^*) - \frac{\eta}{4}\Norm{\nabla f(\bx^t)}^2\right]  + \left(\eta + \frac{2(1-p)\eta}{p}\right)\BE[U^t]   \\
& + \left(\frac{(1-8\eta\mu)\eta}{m^3n^3bp} + 528\rho^2m^3n^3\bar L^2\eta^3\right) \BE[V^t] \\
& +  \left(n \bL^2\eta   + \frac{18(1-8\eta\mu)m^2n^3\bar L^2\eta}{p} + \frac{36(1-8\eta\mu)\bL^2 \eta}{mbp} + \frac{264\rho^2(27m^3n^3\bL^2\eta^2+2)m^2n^3\bar L^2\eta}{p}\right) \BE[C^t] \\
& - \left(\frac{1}{2\eta}-\frac{L}{2}
- \frac{15(1-p)\bL^2\eta}{bp}
- \frac{2376\rho^2m^6n^6\bar L^4\eta^3}{p}\right)\BE\Norm{\bx^{t+1}-\bx^t}^2 \\
\leq &  \BE\left[\left(1-\frac{\mu\eta}{2}\right)\Phi^t - \frac{\eta}{4}\Norm{\nabla f(\bx^{t})}^2  - \frac{L^2\eta}{4}\Norm{\bx^{t+1}-\bx^t}^2\right]
\leq  \BE\left[\left(1-\frac{\eta\mu}{2}\right)\Phi^t \right],
\end{split}
\end{align}
where the second inequality uses the fact $p\geq 8\eta\mu$ from the settings of $\eta=1/(8L)$ and $p = (\bL/L)\max\left\{\mu/\bL, 1/\sqrt{mn}\right\}$; the last step is based on the parameter settings in the statement of Theorem \ref{thm:PL-1} and taking
\begin{align*}  
\small\begin{split}    
 K= \left\lceil\frac{2+\sqrt{2}}{2\sqrt{\gamma}}\ln\left(14\max\left\{\frac{44m^6n^6(\bL^2+L^2)\bL^2}{5L^3\mu},\,\frac{33(27m^3n^3\bL^2+128L^2)}{2L(308L+183\mu)},\,\frac{297m^{6.5}n^{6.5}\bar L^3}{51L^3}\right\}\right)\right\rceil.
\end{split} 
\end{align*}
We split the function value gap at the $i$-th node as
\begin{align}\label{eq:PL-gap}
  \BE[f(\vx^T_i) - f^*]  
= \BE[f(\bx^T) - f^*]  + \BE[f(\vx^T_i) - f(\bx^T)].
\end{align}
For the first term on the right-hand side of equation (\ref{eq:PL-gap}), we have
\begin{align}\label{eq:PL-gap-1}
\small\begin{split}    
& \BE[f(\bx^T) - f^*] \\
\overset{~(\ref{eq:Lyapunov})~}\leq & \BE[\Phi^T-132\bL^2\eta C^T] \\
\overset{(\ref{eq:PL-decay})}\leq & \BE\left[\left(1-\frac{\mu\eta}{2}\right)\Phi^{T-1} - \frac{\eta}{4}\Norm{\nabla f(\bx^{T-1})}^2 - \frac{L^2\eta}{4}\Norm{\bx^{T}-\bx^{T-1}}^2 - 132\bL^2\eta C^T\right].
\end{split}
\end{align}
For the second term on the right-hand side of equation (\ref{eq:PL-gap}), we have
\begin{align}\label{eq:PL-gap-2}
\begin{split}   
 & \BE[f(\vx^T_i) - f(\bx^T)]  \\
\leq & \BE\left[\inner{\nabla f(\bx^T)}{\vx^T_i - \bx^T} + \frac{L}{2}\Norm{\vx^T_i - \bx^T}^2\right]  \\
\leq &  \BE\left[\frac{\eta}{8}\Norm{\nabla f(\bx^T)}^2 + \frac{2}{\eta}\Norm{\vx^T_i - \bx^T}^2 + \frac{L}{2}\Norm{\vx^T_i - \bx^T}^2\right]  \\
\leq &  \BE\left[\frac{\eta}{4}\Norm{\nabla f(\bx^{T-1})}^2 + \frac{\eta}{4}\Norm{\nabla f(\bx^T)-\nabla f(\bx^{T-1})}^2 + \left(\frac{2}{\eta} + \frac{L}{2}\right)C_T\right]  \\
\leq &  \BE\left[\frac{\eta}{4}\Norm{\nabla f(\bx^{T-1})}^2 + \frac{L^2\eta}{4}\Norm{\bx^T-\bx^{T-1}}^2 + \left(\frac{2}{\eta} + \frac{L}{2}\right)C_T\right] 
\end{split}
\end{align}
where the first and the last inequalities are based on Assumption \ref{asm:smooth-global}; 
the second inequality is based on the Cauchy--Schwarz inequality;
the third inequality is based on the Young's inequality and the definition of $C_T$ as equation (\ref{eq:def-C}).

Combining equations (\ref{eq:PL-gap}), (\ref{eq:PL-gap-1}) and (\ref{eq:PL-gap-2}), we have
\begin{align*}  
 & \BE[f(\vx^T_i) - f^*]   \\
\leq & \BE\left[\left(1-\frac{\mu\eta}{2}\right)\Phi^{T-1}   - \left(132\bL^2\eta - \frac{2}{\eta} - \frac{L}{2}\right)C_T\right] \\
\leq & \BE\left[\left(1-\frac{\mu\eta}{2}\right)^T\Phi^0\right], 
\end{align*}
where the last step is based on the setting of $\eta=1/(8L)$, the assumption $\bL\geq L$, and equation (\ref{eq:PL-decay}).

We also have
\begin{align}\label{ieq:Phi0-PL}
\begin{split}
 \Phi^0  
\overset{(\ref{eq:Lyapunov})}{=} & f(\bx^0)  - f^*  + \frac{2\eta}{p}U_{0} + \frac{\eta}{m^3n^3bp} V_{0} + \frac{132m^2n^3\bar L^2\eta}{p}C_{0} \\
\,\,=\,\, & f(\bx^0) - f^* + \frac{132m^2n^3\bar L^2\eta}{p} C^0 \\
\overset{(\ref{eq:def-C})}{\leq} & f(\bx^0) - f^* + \frac{132m^2n^3\bar L^2\eta^3}{p} \Norm{\mS^0 - \vone\bs^0}^2 \\
\,\,\leq\,\, & 2(f(\bx^0) - f^*)+\epsilon,
\end{split}
\end{align}
where the last step is based on the setting
\begin{align*}
\small\begin{split}
\hK= \left\lceil\frac{2+\sqrt{2}}{2\sqrt{\gamma}}\ln\left(1+\frac{231m^2n^3\bar L^2}{64L^3} \max\left\{\bkappa, {\sqrt{mn}}\right\}  \sum_{i=1}^m\frac{\Norm{\nabla f_i(\bx^0)-\nabla f(\bx^0)}^2}{f(\bx^0) - f^*+ \epsilon}\right)\right\rceil.
\end{split}
\end{align*}
Therefore, we can achieve
\begin{align}\label{eq:PL-convergence-end}
    \BE[f(\vx^T_i) - f^*] \leq \epsilon
\end{align}
for all $i\in[m]$ by taking
\begin{align*}
    T = \left\lceil\frac{16L}{\mu}\ln\left(\frac{2(f(\bx^0) - f^*)}{\epsilon}\right)\right\rceil.
\end{align*}

\textbf{Part II}: We then consider the case of $\bL\geq\sqrt{mn}L$.
Note that the setting $p=1$ leads to  Algorithm \ref{alg:DEAREST} always holds 
\begin{align*}
    \vg^t_i = \nabla f_i(\vx^t_i),
\end{align*}
which implies $U^t=V^t=0$ for all $t$ and the Lyapunov function can be simplified to 
\begin{align*}
\Phi^t \triangleq & f(\bx^{t}) - f^* + 132m^2n^3\bar L^2\eta C^{t}.
\end{align*}
Following the derivation of equations (\ref{eq:recursion-Phi-0}) and (\ref{eq:PL-decay}), we have
\begin{align}
\begin{split}
  & \BE[\Phi^{t+1}] \\
= & \BE\left[f(\bx^{t+1}) - f^* + 132m^2n^3\bar L^2\eta C^{t+1}\right]  \\
\leq & \BE[(1-\eta\mu)(f(\bx^t) - f^*)] +  \left(n \bL^2\eta  + 264\rho^2(27m^3n^3\bL^2\eta^2+2)m^2n^3\bar L^2\eta\right) \BE[C^t] \\
& - \left(\frac{1}{2\eta}-\frac{L}{2}
- 2376\rho^2m^6n^6\bar L^4\eta^3\right)\BE\Norm{\bx^{t+1}-\bx^t}^2 \\
\leq & \BE\left[(1-\eta\mu)\Phi^t - (1-\eta\mu)m^2n^3\bar L^2\eta C^t\right]. 
\end{split}
\end{align}
We then follow the derivation of equations (\ref{eq:PL-decay})-(\ref{eq:PL-convergence-end}) to achieve
\begin{align}
\BE[f(\vx^T_i)-f^*] \leq \epsilon.
\end{align}   
for all $i\in[m]$.
\end{proof}

Similar to the analysis of Corollary \ref{cor:PL-1}, we can obtain the upper complexity bounds for \DEAREST~(Algorithm \ref{alg:DEAREST}) under the PL condition as follows. \vskip0.1cm

\begin{corollary}\label{cor:PL-1}    
Under the assumptions and settings of Theorem \ref{thm:PL-1}, Algorithm~\ref{alg:DEAREST} can find an $\epsilon$-suboptimal solution at every agent with the communication rounds of 
\begin{align*}
    \tilde\fO\left(\frac{\kappa}{\sqrt{\gamma}}\ln\left(\frac{\Delta}{\epsilon}\right)\right),
\end{align*}
expected LIFO complexity of
\begin{align*}
\fO\left(\left(mn + \min\left\{mn\kappa, \sqrt{mn}\bar\kappa\right\}\right)\ln\left(\frac{\Delta}{\epsilon}\right)\right),     
\end{align*}
and the expected computation rounds of
\begin{align*}
\tilde\fO\left(\left(n+\kappa+\min\left\{n\kappa, \sqrt{\frac{n}{m}}\bar \kappa\right\}\right)\ln\left(\frac{\Delta}{\epsilon}\right)\right).
\end{align*}
\end{corollary}
\begin{proof}  
The communication rounds can be upper bounded by
\begin{align*}
    \hat K + KT = \fO\left(\frac{\ln(mn\bL/(L\epsilon))}{\sqrt{\gamma}} + \frac{\ln(mn\kappa\bL)}{\sqrt{\gamma}}\cdot \kappa\ln\left(\frac{1}{\epsilon}\right)\right) = \tilde\fO\left(\frac{\kappa\ln(1/\epsilon)}{\sqrt{\gamma}}\right),
\end{align*}
where we use the settings of $T$, $K$, and $\hat K$ in Theorem \ref{thm:PL-1}.

We then consider the LIFO complexity.
In the case of $\bL>\sqrt{mn}L$, the number of LIFO calls can be upper bounded by
\begin{align*}
    mnT = \fO\left(mn\kappa\ln\left(\frac{1}{\epsilon}\right)\right),
\end{align*}
where we use the setting of $T$ in Theorem \ref{thm:PL-1}.
In the case of $\bL\leq\sqrt{mn}L$, the number of LIFO calls can be upper bounded by
\begin{align*}
 &   mn + (pn + (1-p)b)T \leq mn + (pmn + b)T \\
= & \fO\left(mn + \left(\frac{\bL}{L}\max\left\{\frac{1}{\bkappa}, \frac{1}{\sqrt{mn}}\right\}\cdot mn + \left\lceil\frac{\bL}{L}\min\left\{\bkappa, \sqrt{mn}\,\right\}\right\rceil \right)\cdot\kappa\ln\left(\frac{1}{\epsilon}\right)\right) \\
= & \fO\left(\left(mn + \sqrt{mn}\,\bar\kappa\right)\ln\left(\frac{1}{\epsilon}\right)\right),
\end{align*}
where we use the settings of $b$, $p$, and $T$ in Theorem \ref{thm:PL-1}.
Therefore, the overall LIFO complexity is no more than
\begin{align*}
\fO\left(\left(mn + \min\left\{mn\kappa, \sqrt{mn}\bar\kappa\right\}\right)\ln\left(\frac{1}{\epsilon}\right)\right),     
\end{align*}

We finally consider the computation rounds.
In the case of $\bL>\sqrt{mn}L$, the number of computation round is no more than
\begin{align}\label{eq:computation-round-simple-PL}
    n + nT = \fO\left(n + \frac{L}{\mu}\ln\left(\frac{1}{\epsilon}\right)\cdot n\right) \leq \fO\left(n\kappa \ln\left(\frac{1}{\epsilon}\right)\right).
\end{align}
In the case of~$\bL\leq\sqrt{mn}L$, we following the analysis in Section \ref{sec:computation-rounds} which implies
the overall expected number of computation rounds is no more than
\begin{align*}
\small\begin{split}
& n + \left((1-p)\BE\left[2\sum_{t=1}^T \max_{i\in[m]} Y_i^t \right] + pn\right)T \\
\leq & \fO\left(n + (pn+(1-p)(bnq + (\ln mb)^2))T\right) \\
\leq & \fO\left(n + (pn + bnq + (\ln mb)^2)T\right) \\
\leq & \fO\left(n + \left(\frac{\bL}{L}\max\left\{\frac{1}{\bkappa}, \frac{1}{\sqrt{mn}}\right\}\cdot n + \frac{\bL}{L}\min\bigg\{\bkappa, \sqrt{mn}\bigg\}\cdot\frac{n}{mn} + \left(\min\left\{\bkappa, \sqrt{mn}\ln \frac{m\bL}{L}\right\}\right)^2\right)T\right) \\
\leq & \tilde\fO\left(\left(n+\kappa+\sqrt{\frac{n}{m}}\,  \bar \kappa\right)\ln\left(\frac{1}{\epsilon}\right)\right), 
\end{split}
\end{align*}
where we use $q=1/(mn)$ and the settings of $b$, $p$, and $T$ in Theorem \ref{thm:PL-1}.
Combining the upper bound (\ref{eq:computation-round-simple-PL}) in the case of $\bL\leq\sqrt{mn}L$, the expected number of computation rounds of our method can be upper bounded by
\begin{align*}
& \min\left\{\fO\left(n\kappa \ln\left(\frac{1}{\epsilon}\right)\right), \tilde\fO\left(\left(n+\kappa+\sqrt{\frac{n}{m}}\,  \bar \kappa\right)\ln\left(\frac{1}{\epsilon}\right)\right) \right\} \\
= & \tilde\fO\left(\left(n+\kappa+\min\left\{n\kappa, \sqrt{\frac{n}{m}}\,  \bar \kappa\right\}\right)\ln\left(\frac{1}{\epsilon}\right)\right).
\end{align*}
\end{proof}

\subsection{The Lower Bounds under the PL Condition}\label{sec:PL-lower}

This section provides the lower bounds for decentralized finite-sum optimization under the PL condition. 
Compared with the construction of \citet{yue2023lower}, we additionally consider the communication in the network, the finite-sum structure in objective,  and the difference between global and mean-squared smoothness.
We present the results of communication

We introduce the functions $\psi_\theta:\BR\to\BR$, $q_{T,t}:\BR^{Tt}\to\BR$ and $g_{T,t}:\BR^{Tt}\to\BR$ provided by \citet{yue2023lower}, that is
\begin{equation}\label{eq:g}
\begin{aligned}   
& \psi_{\theta}(x)
= \begin{cases}
	\frac{1}{2}x^{2}, 
	& x\leq\frac{31}{32}\theta, \\[0.3em]
	\frac{1}{2}x^{2}-16(x-\frac{31}{32}\theta)^{2},& {\frac{31}{32}\theta<x\leq \theta,}\\[0.3em]
	\frac{1}{2}x^{2}-\frac{1}{32}\theta^2+16(x-\frac{33}{32}\theta)^{2}, &      
	{\theta<x\leq \frac{33}{32}\theta,}\\[0.3em]
	\frac{1}{2}x^{2}-\frac{1}{32}\theta^2,      
	& {x>\frac{33}{32}\theta,}
\end{cases} \\[0.15cm]
& q_{T,t}(\vx)=\frac{1}{2}\sum_{i=0}^{t-1}\Big(\Big(\frac{7}{8}x_{iT} -x_{iT+1}\Big)^2+\sum_{j=1}^{T-1}(x_{iT+j+1}-x_{iT+j})^2\Big), \\
& \text{and}\quad g_{T,t}(\vx)=q_{T,t}(\vb-\vx)+\sum_{i=1}^{Tt}\psi_{b_i}(b_i-x_i), 
\end{aligned}
\end{equation}
where we define $x_0=0$ and $\vb\in \mathbb{R}^{Tt}$ with $b_{kT+\tau}=({7}/{8})^k$ for~$k\in \{0\}\cup[T-1]$ and~$\tau\in[T]$.
We can verify that 
\begin{align*}
g_{T,t}^*\triangleq \inf_{\vy\in\BR^{Tt}}g_{T,t}(\vy)=0.    
\end{align*}
The following lemma shows the function $g_{T,t}$ holds the zero-chain property \cite{nesterov2018lectures,carmon2020first} and describes its smoothness, PL parameter and optimal function value gap, which results the tight lower bound of full-batch first-order methods \citep{yue2023lower}.
\begin{lemma}[{\citep[Section 4]{yue2023lower}}]\label{cor:g} 
The function $g_{T,t}:\BR^{Tt}\to\BR$ holds that:
\begin{enumerate}[itemsep=3pt,topsep=2pt]
\item[(a)]
For any $\vx\in\BR^{Tt}$, it holds $\prog(\nabla g_{T,t}(\vx)) \leq \prog(\vx)+1$.
\item[(b)] The function $g_{T,t}$ is 37-smooth.
\item[(c)] The function $g_{T,t}$ is $1/(aT)$-PL with $a=19708$.  
\item[(d)] The function $g_{T,t}$ satisfies that $g_{T,t}(\mathbf{0})-g_{T,t}^*\leq3T$. 
\item[(e)] For any  $\delta<0.01$, $t=2\lfloor\log_{{8}/{7}}{2}/{(3\delta)}\rfloor$, and $\vx\in\BR^{Tt}$ satisfying that \\ ${\rm supp}(\vx)\subseteq\left\{1,2,\cdots,Tt/2\right\}$, it holds that $g_{T,t}(\vx)-g_{T,t}^*> 3T\delta$. 
\end{enumerate}
\end{lemma}
We can decompose the function $g_{T,t}:\BR^{Tt}\to\BR$ defined in equation (\ref{eq:g}) by introducing the functions $q_1:\BR^{Tt}\to\BR$, $q_2:\BR^{Tt}\to\BR$ and $r:\BR^{Tt}\to\BR$ as 
\begin{align}
&	q_1(\vx)=\frac{1}{2}\sum_{i=1}^{Tt/2}(x_{2i-1}-x_{2i})^2, \label{eq:q1} \\
&	q_2(\vx)=\frac{1}{2}\sum_{i=0}^{t-1}\left[\Big(\frac{7}{8}x_{iT}-x_{iT+1}\Big)^2+\sum_{j=iT/2+1}^{(i+1)T/2-1} (x_{2j}-x_{2j+1})^2\right], \label{eq:q2} \\
& r(\vx)=\sum_{i=1}^{Tt}\psi_{b_i}(b_i-x_i), \label{eq:r}
\end{align}
where we suppose $T$ is even and let $x_0=0$.
Then we can verify that the function~$g_{T,t}(\cdot)$ can be written as
\begin{align*}
    g_{T,t}(\vx)=q_1(\vb-\vx)+q_2(\vb-\vx)+r(\vx).
\end{align*}

\begin{lemma}\label{lem:function-pl-decompose}
The functions $q_1$ and $q_2$ are 2-smooth and the function $r$ is 33-smooth. 
\end{lemma}
\begin{proof}
    The smoothness of $r$ can be achieved from the proof of Lemma 4 provided by \citet{yue2023lower}.
    Since the functions $q_1$ and $q_2$ are quadratic and hold $q_1(\vx)\leq\vx^\top\vx$ and~$q_2(\vx)\leq\vx^\top\vx$ for all $\vx\in\BR^{Tt}$, they are $2$-smooth.  
\end{proof}

We now provide the lower bounds on the communication rounds under the PL condition in the following theorem.

\begin{theorem}
    For any $L>0$, $\bar L>0$, $\mu>0$, $m\geq2$, $n>0$, $\Delta>0$, $\gamma\in[1-\cos(\pi/m),1]$, and $\epsilon>0$ with $\bL\geq L\geq78a\mu$, $\epsilon<0.01\Delta$, and $a=19708$, there exist matrix $\mW\in\BR^{m\times m}$ with $1-\lambda_2(\mW)=\gamma$ and $\bar L$-mean-squared smooth functions $\{f_{i,j}:\mathbb{R}^{d}\rightarrow\mathbb{R}\}$ with $d=\fO(\Delta L\epsilon^{-2})$ such that $f=\frac{1}{mn}\sum_{i=1}^{m}\sum_{j=1}^{n}f_{i,j}$ is $L$-smooth and $\mu$-PL with $f(\mathbf{0})-\inf_{\vy\in\BR^d}f(\vy)\leq\Delta$. 
    In order to find an $\epsilon$-suboptimal solution of problem $\min_{\vx\in\BR^d}f(\vx)$, any LIFO algorithm needs at least $\Omega\big(\kappa/\sqrt{\gamma}\ln(1/\epsilon)\big)$ communication rounds.
\end{theorem}

\begin{proof}
According to Lemma \ref{lem:big gamma}, there exists a ring-lattice graph $\mathcal{G}=\{\fV,\fE\}$ with the associated mixing matrix $\mW\in\BR^{m\times m}$ and $\fV_1,\fV_2\subseteq\fV$ such that $\lvert\fV\rvert=m$, $1-\lambda_2(\mW)=\gamma$, $\lvert\fV_1\rvert\geq m/3$, $\lvert\fV_2\rvert\geq m/3$ and the distance between $\fV_1$ and $\fV_2$ satisfies ${\rm dist}(\fV_1,\fV_2)=\Omega(1/\sqrt{\gamma})$.

We consider the function $g_{T,t}$ defined in equation (\ref{eq:g}) with 
\begin{align}\label{eq:comm-T-pl}
T=2\left\lfloor\frac{\kappa}{78a}\right\rfloor\quad\text{and}\quad t=2\left\lfloor \log_{\frac{8}{7}}\frac{2\Delta}{3\epsilon}\right\rfloor,\quad\text{where}~~L\geq78a\mu\quad\text{and}\quad \epsilon<0.01\Delta.    
\end{align}

According to Lemma \ref{lem:scale} with 
\begin{align}\label{eq:comm-alpha-beta-pl}
g(\vx)=g_{T,t}(\vx), \qquad \alpha=\frac{\Delta}{3T} \qquad \text{and} \qquad 
\beta=\sqrt{\frac{LT}{13\Delta}},    
\end{align}
we can conclude the function $\hat g(\vx)=\alpha g_{T,t}(\beta\vx)$ is $37\alpha\beta^2$-smooth, and it holds 
\begin{align}\label{eq:comm-lower-hat-g-pl}
\hat g(\mathbf{0})-\inf_{\vy\in\BR^T}\hat g(\vy) = \alpha \left(g_{T,t}(\mathbf{0})-\inf_{\vy\in\BR^T} g_{T,t}(\vy)\right) \leq 3\alpha T, 
\end{align}
where the inequality is based on Lemma \ref{cor:g}(d).

We then construct the hard instance based on the decomposition 
\begin{align*}   
g_{T,t}(\vx)=q_1(\vb-\vx)+q_2(\vb-\vx)+r(\vx), 
\end{align*}
where $q_1$, $q_2$ and $r$ are defined in equations (\ref{eq:q1}), (\ref{eq:q2}), and (\ref{eq:r}).
Specifically, we take $f_{i,j}$ as 
\begin{align}\label{eq:comm-instance-pl}
f_{i,j}(\vx)=
\begin{cases}
\alpha r(\beta\vx)+\dfrac{\alpha m}{\vert \fV_1\vert}q_1(\vb-\beta\vx),   &{i\in \fV_1}, \\[0.4cm]
\alpha r(\beta\vx)+\dfrac{\alpha m}{\vert \fV_2\vert}q_2(\vb-\beta\vx), &{i\in \fV_2}, \\[0.4cm]
\alpha r(\beta\vx),   &{\text{otherwise}}.
\end{cases}
\end{align}
According to Lemma \ref{lem:scale}, Lemma \ref{lem:function-pl-decompose} and equation (\ref{eq:comm-instance-pl}), each component function $f_{i,j}$ is $\alpha\beta^2\big(33+2m/\min\{\lvert\fV_1\rvert,\lvert\fV_2\rvert\}\big)$-smooth.
Therefore, the function set $\{f_{i,j}\}_{i,j=1}^{m,n}$ is $\alpha\beta^2\big(33+2m/\min\{\lvert\fV_1\rvert,\lvert\fV_2\rvert\}\big)$-mean-squared smooth.

The definition of $f_{i,j}$ in equation (\ref{eq:comm-instance-pl}) also means
\begin{align*}
f(\vx)=\frac{1}{mn}\sum_{i=1}^m\sum_{j=1}^nf_{i,j}(\vx)=\hat g(\vx).
\end{align*}
The settings of $\alpha$, $\beta$, $T$, $t$, $\lvert\fV_1\rvert$ and $\lvert\fV_2\rvert$ in equations (\ref{eq:comm-T-pl}) and (\ref{eq:comm-lower-hat-g-pl}) imply
\begin{align*}
	& 37\alpha\beta^2=\frac{37L}{39}\leq L, \qquad \frac{\alpha\beta^2}{aT}\geq\mu, \qquad 
    3\alpha T\leq\Delta, \\
    &  \text{and} \qquad
    \alpha\beta^2\left(33+\frac{2m}{\min\{\lvert\fV_1\rvert,\lvert\fV_2\rvert\}}\right) \leq L \leq \bL.    
\end{align*}
Therefore, the global objective $f=\frac{1}{mn}\sum_{i=1}^m\sum_{j=1}^nf_{i,j}$ is $L$-smooth, $\mu$-PL and satisfies $f(\mathbf{0})-f^*\leq\Delta$ and the function sets $\{f_{i,j}\}_{i=1,j}^{m,n}$ are $\bar L$-mean-squared smooth, which satisfies our requirements.

Now we show that any LIFO algorithm require at least 
\begin{align}
\frac{Tt}{2}{\rm dist}(\fV_1,\fV_2)=\Omega\left(\frac{\kappa}{\sqrt{\gamma}}\ln(\frac{1}{\epsilon})\right)    
\end{align}
communication rounds to to achieve an $\epsilon$-suboptimal solution $\hat\vx$ of $f(\cdot)$.

We take $\delta=\epsilon/\Delta<0.01$. According to Lemma \ref{cor:g}(e), we have
\begin{align*}
   \|\nabla f(\vx)\| >3\alpha T\delta=\epsilon.
\end{align*}
for all $\vx=[x_1,\dots,x_{Tt}]^\top\in\BR^d$ such that $\prog(\vx)\leq Tt/2$.
We let 
\begin{align}\label{dfn:nnz1}
\begin{split}    
    & \widehat{\rm nnz}(s,i) \triangleq
     \max\big\{i\in\{0,1,\dots,Tt\}: \\
    & \qquad\qquad\qquad\qquad \text{there exists $\vy=[y_1,\dots,y_{Tt}]^\top\in\mathcal{M}_i^s$ such that $y_i\neq0$}\big\},
\end{split}    
\end{align}
where we denote $y_0=1$.

Consider the definitions of $f_{i,j}$, $q_1$, $q_2$ and $r$ in equations (\ref{eq:q1}), (\ref{eq:q2}), (\ref{eq:r})
and~(\ref{eq:comm-instance-pl}).
Notice that the fact $\psi_{b_i}'(b_i)=0$ implies the partial derivative of $r(\vx)$ with respect to~$x_k$ is zero when $x_k=0$.
This means the computation of any LIFO algorithm holds:
\begin{enumerate}
    \item If $i\in\fV_1$ and $\widehat{\rm nnz}(s,i)$ is odd, one step of local computation can increase at most one dimension for memory of node~$i$.
    \item If $i\in\fV_2$ and $\widehat{\rm nnz}(s,i)$ is even, one step of local computation can increase at most one dimension for memory of node $i$. 
    \item Otherwise, one step of local computation cannot increase the dimension for memory of node~$i$.
\end{enumerate}
In summary, we have
\begin{align}\label{eq:nnz}
    \widehat{\rm nnz}(s+1,i) \leq \begin{cases}
        \widehat{\rm nnz}(s,i)+1,  & \text{if}~~i\in\fV_1,\  \widehat{\rm nnz}(s,i)\equiv 1 \pmod{2}, \\
        \widehat{\rm nnz}(s,i)+1, & \text{if}~~i\in\fV_2,\  \widehat{\rm nnz}(s,i)\equiv 0 \pmod{2}, \\
        \widehat{\rm nnz}(s,i), &\text{otherwise}.
    \end{cases}
\end{align}

We consider the cost to reach the second coordinate from the initial status that~$\fM_i^0=\{\vzero\}$ for all $i\in[m]$.
According to equation (\ref{eq:nnz}), we need to let a node in~$\fV_1$ reach the first coordinate, which requires at least one local computation step on some node in $\fV_2$ first. 
According to the definition of LIFO algorithm (Definition~\ref{dfn:LIFO}), one must perform at least ${\rm dist}(\fV_1,\fV_2)$ communication rounds for a node in $\fV_1$ to receive the information of the first coordinate from some node in $\fV_2$. 
After above rounds, we can perform at least 1 computation on nodes in $\fV_1$ to reach the second coordinate.
In summary, to reach the second coordinate requires at least~${\rm dist}(\fV_1,\fV_2)$ communication rounds.

Similarly, to reach the $k$-th coordinate, any LIFO algorithm must perform at least~$(k-1){\rm dist}(\fV_1,\fV_2)$ communication rounds.
Thus, to attain the condition 
\begin{align*}
\prog(\vx)>\frac{Tt}{2},     
\end{align*}
one needs at least 
\begin{align*}
\frac{Tt}{2}{\rm dist}(\fV_1,\fV_2)=\Omega\left(\frac{\kappa}{\sqrt{\gamma}}\ln\left(\frac{1}{\epsilon}\right)\right) 
\end{align*}
communications rounds.
\end{proof}

We then provide the lower bound on the IFO complexity for the finite-sum optimization problem with $N$ individual functions under the PL condition, which directly implies the corresponding lower bound on the LIFO complexity in decentralized setting by taking $N=mn$.

\begin{theorem}\label{thm:LFO-pl}
        For any $\bar L>0$, $L>0$, $\mu>0$, $N>0$, $\Delta>0$, and $\epsilon>0$ with $\epsilon<0.005\Delta$ and $\bL\geq L>\mu$, 
        there exists $\bar L$-mean-squared smooth functions $\{\tf_i:\BR^d\to\BR\}_{i=1}^N$ such that the function $\tf=\frac{1}{N}\sum_{i=1}^{N}\tf_i$ is $L$-smooth, $\mu$-PL and holds $\tf(\vx^0)-\inf_{\vy\in\BR^d}\tf(\vy)\leq \Delta$. 
In order to find an $\epsilon$-suboptimal solution of problem $\min_{\vx\in\BR^d}\tf(\vx)$, any IFO algorithm (defined in Appendix \ref{appendix:IFO}) needs at least $\Omega\big(N+\min\{N\kappa ,\sqrt{N}\bar\kappa\}\ln(1/\epsilon)\big)$ IFO calls.
\end{theorem}
\begin{proof}
We first consider the case of $\min\{\bar\kappa,\kappa\sqrt{N}\}\geq37a\sqrt{N}$.
We apply the function~$g_{T,t}$ defined in equation (\ref{eq:g}) with 
\begin{align}\label{eq:ifo-T-pl}
T=\left\lfloor\frac{\min\{\bar\kappa,\kappa\sqrt{N}\}}{37a\sqrt{N}}\right\rfloor\quad\text{and}\quad t=2\left\lfloor \log_{\frac{8}{7}}\frac{\Delta}{3\epsilon}\right\rfloor,\quad\text{where}~~\epsilon<0.005\Delta.    
\end{align} 
According to Lemma \ref{lem:scale} with 
\begin{align}\label{eq:ifo-alpha-beta-pl}
g(\vx)=g_{T,t}, \qquad \alpha = \frac{\Delta}{3T} \qquad \text{and} \qquad 
\beta=\sqrt{\frac{3\min\{\sqrt{N}\bL,NL\}T}{37\Delta}},    
\end{align}
we can conclude the function $\hat g(\vx)=\alpha g_{T,t}(\beta\vx)$ is $37\alpha\beta^2$-smooth, and it holds 
\begin{align}\label{eq:lower-hat-g-pl}
\hat g(\mathbf{0})-\inf_{\vy\in\BR^T}\hat g(\vy) = \alpha \left(g_{T,t}(\mathbf{0})-\inf_{\vy\in\BR^T} g_{T,t}(\vy)\right) \leq 3\alpha T, 
\end{align}
where the inequality is based on Lemma \ref{cor:g}(d).

We construct the hard instance according to Lemma \ref{lem:orthogonal transformation} with $g(\vx)=\hat g(\vx)$, which results the functions
\begin{align}\label{eq:ifo-instance-1-pl}
     \tf_i(\vx)=\alpha g_{T,t}(\beta \mU^{(i)}\vx) 
     \qquad\text{and}\qquad
     \tf(\vx)=\frac{1}{N}\sum_{i=1}^{N}\tf_i(\vx) 
\end{align}
such that the functions $\{\tf_i\}_{i=1}^N$ are $37\alpha\beta^2/\sqrt{N}$-mean-squared smooth and the function $\tf$ is $37\alpha\beta^2/N$-smooth and $\alpha\beta^2/(aNT)$-PL with 
\begin{align*}
\tf(\vx^0)-\tf^*=\hat g(\mathbf{0})-\inf_{\vy\in\BR^T}\hat g(\vy)
\overset{(\ref{eq:lower-hat-g-pl})}{\leq} 3\alpha T,     
\end{align*}
where $\tf^*=\inf_{\vy\in\BR^T}\tf(\vy)$.

The settings of $\alpha$, $\beta$, $T$ and $t$ in equations (\ref{eq:ifo-T-pl}) and (\ref{eq:ifo-alpha-beta-pl}) imply
\begin{align*}
	\frac{37\alpha\beta^2}{\sqrt{N}}\overset{(\ref{eq:ifo-alpha-beta-pl})}{\leq}\bar L, \qquad \frac{37\alpha\beta^2}{N}\overset{(\ref{eq:ifo-alpha-beta-pl})}{\leq} L, \qquad\frac{\alpha\beta^2}{anT}\overset{(\ref{eq:ifo-T-pl})}{\geq}\mu\qquad\text{and}\qquad  
    3\alpha T \overset{(\ref{eq:ifo-alpha-beta-pl})}{\leq}\Delta.
\end{align*}
Therefore, the function set 
$\{\tf_i\}_{i=1}^N$ is $\bar L$-average smooth and the function $f$ is $L$-smooth and $\mu$-PL with $\tf(\vx^0)-\tf^*\leq\Delta$, which satisfies our requirements.

Now we show that any IFO algorithm require at least 
$\lfloor NTt/4\rfloor+1$ IFO calls to achieve an $\epsilon$-suboptimal solution $\hat\vx$ of the function $f$.
We take $\delta=2\epsilon/\Delta$. According to Lemmas \ref{lem:scale} and \ref{cor:g}(e), for all $\vx\in\BR^{NTt}$ with ${\rm supp}(\mU^{(j)}\vx)\subseteq\left\{1,2,\cdots,Tt/2\right\}$, we have
\begin{align}\label{eq:ifo-grad-upper-pl}
    f_i(\vx)- f_i^*= \alpha g_{T,t}(\beta \mU^{(i)}\vx)-\alpha g_{T,t}^*>3\alpha T\delta=2\epsilon.
\end{align} 
We consider the vector $\vx\in\BR^{NTt}$ which is achieved by an IFO algorithm with at most $\lfloor NTt/4\rfloor$ IFO calls.
Lemma \ref{cor:g}(a) implies such vector $\vx$ has at most $\lfloor NTt/4 \rfloor$ non-zero entries.
We partition $\vx\in\BR^{NTt}$ into $N$ vectors $\vy^{(1)},\dots,\vy^{(N)}\in\BR^{Tt}$ such that $\vy^{(j)}=\mU^{(j)}\vx\in\BR^{Tt}$,
then there at least $\lceil N/2\rceil$ vectors in $\{\vy^{(j)}\}_{j=1}^N$ such that each of them has at least $Tt/2$ zero entries.
Lemma \ref{cor:g}(a) means
there exists index set $\fI\subseteq [N]$ with $|\fI|\geq \lceil N/2\rceil$ such that each~$j\in\fI$ satisfies ${\rm supp}(\vy^{(j)})\subseteq\left\{1,2,\cdots,Tt/2\right\}$.
Therefore, we have
\begin{align*}
 f(\vx)- f^* 
= & \frac{1}{N}\sum_{i=1}^N f_i(\vx)- \alpha g_{T,t}^* \\
\geq & \frac{1}{N}\sum_{i\in\fI} (f_i(\vx)- \alpha g_{T,t}^*) \\
>& \frac{1}{N}\cdot\left\lceil \frac{N}{2} \right\rceil \cdot 2\epsilon
\geq\epsilon.
\end{align*}
Hence, any IFO algorithm requires at least 
\begin{align*}
 \left\lfloor \frac{NTt}{4}\right\rfloor+1=\Omega\left(\min\{\kappa N,\bar\kappa\sqrt{N}\}\ln\left(\frac{1}{\epsilon}\right)\right)  
\end{align*}
IFO calls to achieve an $\epsilon$-suboptimal solution of function $\tf(\cdot)$ defined in equation (\ref{eq:ifo-instance-1-pl}).

We then consider the case $\min\{\bar\kappa,\kappa\sqrt{N}\}<37a\sqrt{N}$ and construct the another hard instance.
We assume that $\epsilon<\Delta/2$.
We follow the functions provided by \citet{li2021page}, i.e., define $\tf_i:\BR^d\to\BR$ and $\tf:\BR^d\to\BR$ as
\begin{align}\label{eq:ifo-instance-2-pl}
    \tf_i(\vx)=c\langle \vv_i,\vx\rangle+\frac{L}{2}\|\vx\|^2
    \qquad\text{and}\qquad
     \tf(\vx)=\frac{1}{N}\sum_{i=1}^{N} \tf_i(\vx).
\end{align}
for all $i\in[N]$, where $c=\sqrt{L\Delta}$, $d=2N^2$ and 
\begin{align*}
\vv_i=\Big[\BI\Big(\Big\lceil\frac{1}{2N}\Big\rceil=i\Big),\BI\Big(\Big\lceil\frac{2}{2N}\Big\rceil=i\Big)\cdots,\BI\Big(\Big\lceil\frac{2N^2}{2N}\Big\rceil=i\Big)\Big]^\top\in\mathbb{R}^{d}.    
\end{align*}
We can verify that $\nabla^2f(\vx)=L\mI\succeq\mu\mI$ for any $\vx\in\BR^d$, which means the function $\tf$ is $L$-smooth and $\mu$-strongly convex, also $\mu$-PL
We also have
\begin{align*}
f^* 
= & \frac{1}{N}\sum_{i=1}^{N}\left(c\inner{\mathbf{u}_i}{\vx^*} +\frac{L}{2}\Vert \vx^*\Vert^2\right) \\
= & \frac{c}{N}\sum_{i=1}^{N} \inner{\mathbf{u}_i}{\vx^*}  +\frac{L}{2}\Vert \vx^*\Vert^2 \\
=& -\frac{c^2}{2LN^2}\bigg\Vert\sum_{i=1}^{N}\mathbf{u}_i\bigg\Vert^2
= -\frac{c^2}{L},
\end{align*}
where $\vx^*=-({c}/{NL})\vone$ is the minimizer of $f$.
Then the optimal function value gap holds 
\begin{align*}
f(\vx^0)-f^* = 0 - f^* =\frac{c^2}{L} = \Delta.
\end{align*}

We consider any IFO algorithm with initial point $\vzero$ the $t$-th IFO calls, which holds
\begin{align*}
    \vx^t \in {\rm span}\big(\{\nabla f_{i_0}(\vx^0),\dots,\nabla f_{i_{t-1}}(\vx^{t-1})\}\big) = {\rm span}\big(\{\vu_{i_0},\dots,\vu_{i_{t-1}}\}\big),
\end{align*}
where $i_\tau\in[N]$ is the index of individual which is accessed at the $\tau$-th IFO calls.
Since each $\vu_{i_\tau}$ has $2N$ nonzero entries, any vector $\vx\in\BR^d$ achieved by at most $N/2$ IFO calls has at least 
\begin{align*}
    d - \frac{N}{2}\cdot 2N = 2N^2 - N^2 = N^2
\end{align*}
zero entries.
Let $\fI_0=\{j\in[2n^2]:x_j=0\}$, then we have $|\fI|\geq N^2$.
Based on the construction of $f_i$ and $\vu_i$, we have
\begin{align*}
& f(\vx)-f^* \\
= & \frac{1}{N}\sum_{i=1}^{N} \left(c\inner{\vu_i}{\vx} + \frac{L}{2}\norm{\vx}\right) - \left(-\frac{c^2}{L}\right) \\
= & \sum_{j=1}^{2N^2}\left(\frac{c}{N}x_j+\frac{L}{2}x_j^2+\frac{c^2}{2LN^2}\right) \\
= & \sum_{j\in\fI_0}\left(\frac{c}{N}x_j+\frac{L}{2}x_j^2+\frac{c^2}{2LN^2}\right) + \sum_{j\not\in\fI_0}\left(\frac{c}{N}x_j+\frac{L}{2}x_j^2+\frac{c^2}{2LN^2}\right) \\
\geq & N^2\cdot\frac{c^2}{2LN^2} + \sum_{j\not\in\fI_0}\left(x_j+\frac{c}{NL}\right)^2 \\
\geq & \frac{\Delta}{2}>\epsilon,
\end{align*}
Hence, achieving an $\epsilon$-suboptimal solution requires at least $N/2+1=\Omega(N)$ IFO calls.

Combining the results in above two hard instances, we achieve the lower bound on IFO complexity of
\begin{align*}
    \Omega\left(N+\min\{ N\kappa,\sqrt{N}\bar\kappa\}\ln\left(\frac{1}{\epsilon}\right)\right).
\end{align*}
\end{proof}

Finally, we show the lower bound on the communication rounds under the PL condition.

\begin{theorem}
        For any $\bar L>0$, $L>0$, $\mu>0$, $m>0$, $n>0$, $\Delta>0$, and $\epsilon>0$ with $\epsilon<0.005\Delta$, $\bL\geq L\geq37a\mu$, and $a=19708$, there exists $\bar L$-mean-squared smooth functions $\{f_{i,j}:\BR^d\to\BR\}$ such that the function $f=\frac{1}{mn}\sum_{i=1}^{m}\sum_{j=1}^{n}f_{i,j}$ is $L$-smooth and $\mu$-PL with $f(\vx^0)-\inf_{\vy\in\BR^d}f(\vy)\leq \Delta$. 
        In order to find an $\epsilon$-suboptimal solution of problem $\min_{\vx\in\BR^d}f(\vx)$, any LIFO algorithm needs at least the computation rounds of~$\Omega\big(n+\big(\kappa+\min\{n\kappa, \sqrt{n/m}\bar \kappa\}\big)\ln(1/\epsilon)\big)$.
\end{theorem}
\begin{proof}
    We first show the lower bound in the view of linear speed-up.
    According to Theorem \ref{thm:LFO-pl} with $N=mn$ and $\epsilon<0.005\Delta$, there exist 
    $\bar L$-mean-squared smooth function set $\{f_{i,j}\}_{i=1,j=1}^{m,n}$ such that the function $f(\cdot)=\frac{1}{N}\sum_{i=1}^{N}f_i(\cdot)$ is $L$-smooth, $\mu$-PL and satisfies $f(\vx^0)-f^*\leq \Delta$. 
    In order to find an $\epsilon$-suboptimal solution of problem $\min_{\vx\in\BR^d}f(\vx)$, any IFO algorithm needs at least 
    \begin{align}\label{eq:IFO-computation-pl}
        \Omega\left(mn+\min\{ mn\kappa,\sqrt{mn}\bar\kappa\}\ln\left(\frac{1}{\epsilon}\right)\right)
    \end{align}
    IFO calls. 
    For the distributed setting with $m$ nodes, any DIFO algorithm can perform at most $m$ LIFO calls in per computation round.
    Therefore, we achieve the lower complexity bound of on the computation rounds of 
    \begin{align}\label{eq:IFO-computation2-pl}
    \small\begin{split}
    \Omega\left(\frac{mn}{m}+\frac{\min\{ mn\kappa,\sqrt{mn}\bar\kappa\}}{m}\ln\left(\frac{1}{\epsilon}\right)\right)
    =  \Omega\left(n+\min\left\{n\kappa ,\sqrt{\frac{n}{m}}\bar\kappa\right\}\ln\left(\frac{1}{\epsilon}\right)\right).
    \end{split}
    \end{align}

    Now show the lower bound $\Omega(\kappa\ln(1/\epsilon))$ on the computation rounds. It is worth noting that it is necessary since the case of $\bar L/L<\sqrt{m/n}$ leads to
    \begin{align*}
        \kappa>\min\{\kappa n,\bar\kappa\sqrt{n/m}\}.
\end{align*}
We consider the PL function $g_{T,t}$ defined in equation (\ref{eq:g}) with 
\begin{align}\label{eq:ifo-T2-pl}
T=\left\lfloor \frac{\kappa}{37a}\right\rfloor, \quad t=2\left\lfloor \log_{\frac{8}{7}}\frac{2\Delta}{3\epsilon}\right\rfloor, \quad \text{where}~~\epsilon<0.01\Delta~~\text{and}~~a=19708.    
\end{align}
According to Lemma \ref{lem:scale} with 
\begin{align}\label{eq:ifo-alpha-beta2-pl}
g(\vx)=g_{T,t}(\vx), \qquad \alpha = \frac{\Delta}{3T} \qquad \text{and} \qquad 
\beta=\sqrt{\frac{3LT}{37\Delta}},    
\end{align}
we can conclude the function $\hat g(\vx)=\alpha g_{T,t}(\beta\vx)$ is $37\alpha\beta^2$-smooth and $\alpha\beta^2/(aT)$-PL, $\hat g(\vx)-\inf_{\vy\in\BR^T}\hat g(\vy) = \alpha \left(g_{T,t}(\vx)-\inf_{\vy\in\BR^T} g_{T,t}(\vy)\right)$, and it holds 
\begin{align}\label{eq:lower-hat-g2-pl}
\hat g(\mathbf{0})-\inf_{\vy\in\BR^T}\hat g(\vy) = \alpha \left(g_{T,t}(\mathbf{0})-\inf_{\vy\in\BR^T} g_{T,t}(\vy)\right) \leq 3\alpha T, 
\end{align}
where the inequality is based on Lemma \ref{cor:g}(d).
We construct the hard instance as
\begin{align}\label{eq:difo-instance-2-PL}
     f_{i,j}(\vx)=\hat g(\vx)
     \qquad\text{and}\qquad
     f(\vx)=\frac{1}{mn}\sum_{i=1}^{m}\sum_{j=1}^{n}f_{i,j}(\vx) 
\end{align}
such that $\{f_{i,j}\}_{i=1,j=1}^{m,n}$ is $37\alpha\beta^2$-mean-squared smooth and $f$ is $37\alpha\beta^2$-smooth with 
\begin{align*}
f(\vx^0)-f^*=\hat g(\mathbf{0})-\inf_{\vy\in\BR^T}\hat g(\vy)
\overset{(\ref{eq:lower-hat-g2-pl})}{\leq} 3\alpha T.     
\end{align*}
The settings of $\alpha$, $\beta$, $T$ and $t$ in equations  (\ref{eq:ifo-T2-pl}) and (\ref{eq:ifo-alpha-beta2-pl})  imply
\begin{align*}
	37\alpha\beta^2\overset{(\ref{eq:ifo-alpha-beta2-pl})}{=} L\leq\bar L,\qquad\frac{\alpha\beta^2}{aT}\overset{(\ref{eq:ifo-T2-pl})}{\geq}\mu \qquad\text{and}\qquad  
    3\alpha T \overset{(\ref{eq:ifo-alpha-beta2-pl})}{\leq}\Delta.
\end{align*}
Therefore, the global objective $f=\frac{1}{mn}\sum_{i=1}^m\sum_{j=1}^nf_{i,j}$ is $L$-smooth, $\mu$-PL and satisfies $f(\mathbf{0})-f^*\leq\Delta$, and the function set $\{f_{i,j}\}$ is $\bar L$-mean-squared smooth, which satisfies our requirements.

We then show that any DIFO algorithm require at least 
$Tt/2+1$ computation rounds to achieve an $\epsilon$-suboptimal solution of the problem $\min_{\vx\in\BR^d}f(\vx)$.
We take~$\delta=\epsilon/\Delta$.
According to Lemma \ref{lem:scale} and \ref{cor:g}(e), for all $\vx_i\in\BR^{Tt}$ with ${\rm supp}(\vx_i)\subseteq\left\{1,2,\cdots,Tt/2\right\}$, we have
\begin{align}\label{eq:ifo-grad-upper2-pl}
   f(\vx_i)-f^*= \alpha g_{T,t}(\beta\vx_i)-\alpha g_{T,t}^*>3\alpha T\delta=\epsilon.
\end{align}
We consider the vectors $\vx_1,\dots,\vx_m\in\BR^{Tt}$ which are achieved by an DIFO algorithm with at most $Tt/2$ computation rounds.
Lemma \ref{cor:g}(a) implies such vector $\vx_i$ has at most $Tt/2$ non-zero entries, that is ${\rm supp}(\vx_i)\subseteq\left\{1,2,\cdots,Tt/2\right\}$ for all $i\in[m]$.
Therefore, we have
\begin{align*}
f(\vx_i)-f^*>\epsilon.
\end{align*}
Hence, any LIFO algorithm requires at least 
\begin{align*}
 \frac{Tt}{2}+1=\Omega\left(\kappa\ln\left(\frac{1}{\epsilon}\right)\right)  
\end{align*}
computation rounds to achieve an $\epsilon$-suboptimal solution of function $f(\cdot)$ defined in equation (\ref{eq:difo-instance-2-PL}) at local nodes.

Combining above two hard instance, we obtain the lower bound on the communication rounds of
\begin{align*}
 \Omega\left(n+\left(\kappa + \min\left\{n\kappa ,\sqrt{\frac{n}{m}}\bar\kappa\right\}\ln\left(\frac{1}{\epsilon}\right)\right)\right).
\end{align*}
\end{proof}

\section{Numerical Experiments}

In this section, we validate our theory by conducting the numerical experiments on the following problems:
\begin{enumerate}[itemsep=3pt,topsep=2pt]
    \item[(a)] Hard Instances: We consider the hard instance for the general nonconvex case defined in equation (\ref{eq:ifo-instance-1}) with $T=10^2$ and $\beta=10^3$ and the hard instance for the PL case defined in equation (\ref{eq:comm-alpha-beta-pl}) with $T=100$, $t = 5$, and $\beta = 10^3$.    
    \item[(b)] Linear Regression: We consider the (regularized) linear regression, which corresponds to the individual function 
    \begin{align*}
        f_{i,j}(\vx) = \frac{1}{2}(\va_{i,j}^\top\vx-b_{i,j})^2 + \lambda r_\alpha(\vx),
    \end{align*}
    where 
    \begin{align}\label{eq:regularizer}
        r_{\alpha}(\vx) =   \sum_{k=1}^d \frac{\alpha x_k^2}{1+\alpha x_k^2}
    \end{align}
    is the nonconvex regularizer \cite{antoniadis2011penalized} with $\alpha=10$, $\va_{i,j}\in\BR^d$ is the feature of the~$j$-th sample on the agent $i$, $b_{i,j}\in\BR$ is the corresponding label, and $\lambda\geq 0$ is hyperparameter.
    We set $\lambda = 10^{-8}$ for the general nonconvex case and $\lambda=0$ for the PL case.     
    \item[(c)] Logistic Regression: We consider the (regularized) logistic regression, which corresponds to the individual function 
    \begin{align*}
        f_{i,j}(\vx) = \ln (1 + \exp(-b_{ij} \va_{ij}^\top \vx)) + \lambda r_{\alpha} (\vx), 
    \end{align*}
    where $r_\alpha$ is defined in equation (\ref{eq:regularizer}) with $\alpha=10$, $\va_{i,j}\in\BR^d$ is the feature of the~$j$-th sample on the agent $i$, $b_{i,j}\in\{-1,1\}$ is the corresponding label, and $\lambda\geq 0$ is hyperparameter.
    We also set $\lambda = 10^{-8}$ for the general nonconvex case and $\lambda=0$ for the PL case.
\end{enumerate}

We include the experiments on problems of linear regression and logistic regression on datasets ``Wikipedia Math Essential''  ($mn=1,
056$, $d=730$) \citep{rozemberczki2021pytorch} and ``RCV1.binary'' \citep{lewis2004rcv1, CC01a} ($N=20,096$, $d=47,236$), respectively.
We perform all experiments on the ring graph with $16$ agents (i.e., $m=16$).

For the general nonconvex case, we compare \DEAREST~(Algorithm \ref{alg:DEAREST}) with
the classical Decentralized Stochastic Gradient Descent (\texttt{DSGD}) \citep{nedic2009distributed,lian2017can}
and the state-of-the-art variance reduced method \texttt{DESTRESS}~\citep{li2022destress}.
For the PL case, we compare \DEAREST~(Algorithm \ref{alg:DEAREST}) with  \texttt{DSGD}~\citep{nedic2009distributed,lian2017can} and \texttt{DRONE}~\cite{bai2024complexity}. 
We tune the stepsize~$\eta$ from $\{ 10^1, 10^0, 10^{-1}, 10^{-2}, 10^{-3}, 10^{-4} \}$ for all methods and the mini-batch size $b$ from $\{ 2^4, 2^5, 2^6, 2^7\}$ for \texttt{DSGD}, \DESTRESS, and \DEAREST. 
For \DEAREST, we simply set the probability for full gradient computation as $p=b/(mn)$ according to the theoretical setting in Theorem \ref{thm:main}. 
For \texttt{DRONE}, we set the mini-batch size of agents be $4$ and tune probability for the full participation from $\{0.1, 0.3, 0.5, 0.9\}$.

\begin{figure}[t]
    \centering
    \begin{tabular}{ c c c}
    \includegraphics[scale=0.24]{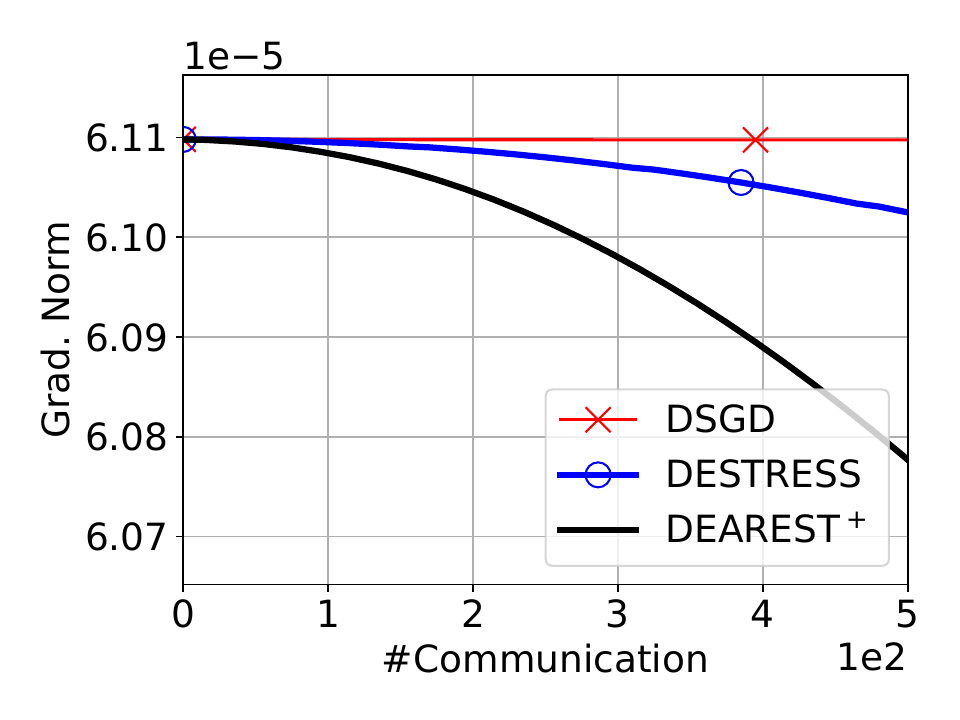} & 
    \includegraphics[scale=0.24]{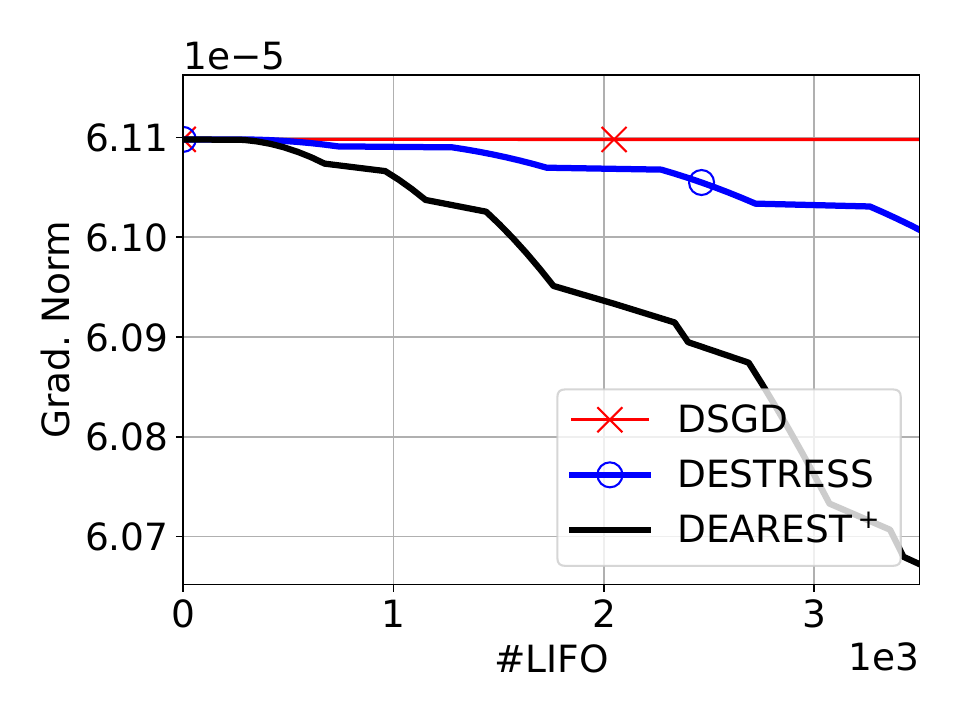}  & \includegraphics[scale=0.24]{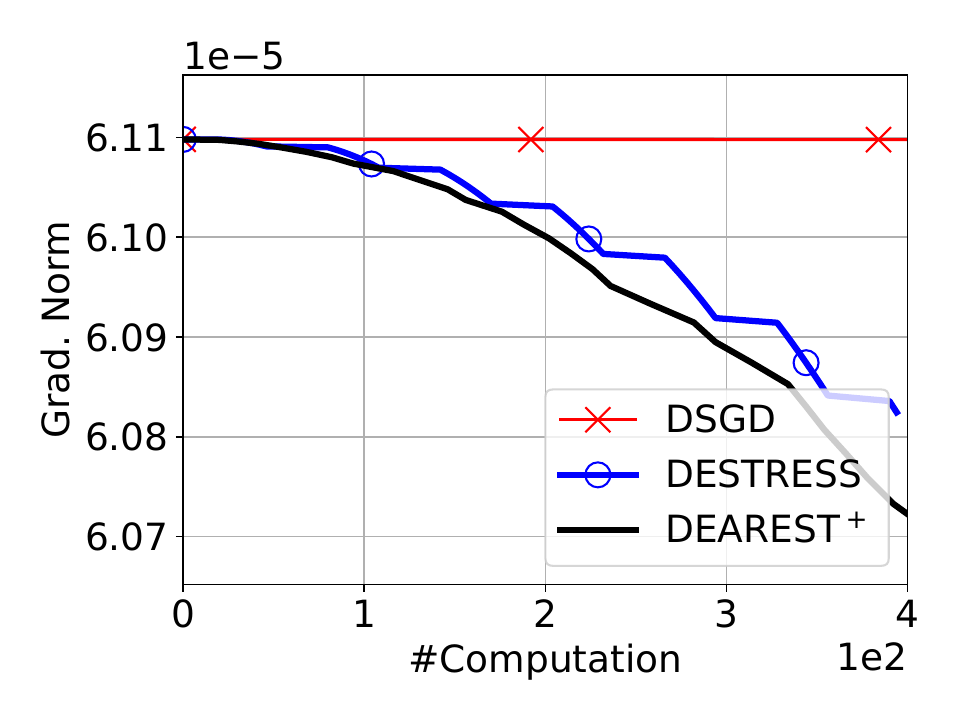}  \\
    \footnotesize (a) Communication &
    \footnotesize (b) LIFO  & 
    \footnotesize (c)  Computation
    \end{tabular} \vskip0.1cm
    \caption{Results on the hard instance (\ref{eq:ifo-instance-1}) in the lower bound for the general nonconvex case.
    }
    \label{fig:nonconvex-lb}
\end{figure}

\begin{figure}
    \centering
    \begin{tabular}{ c c c}
    \includegraphics[scale=0.24]{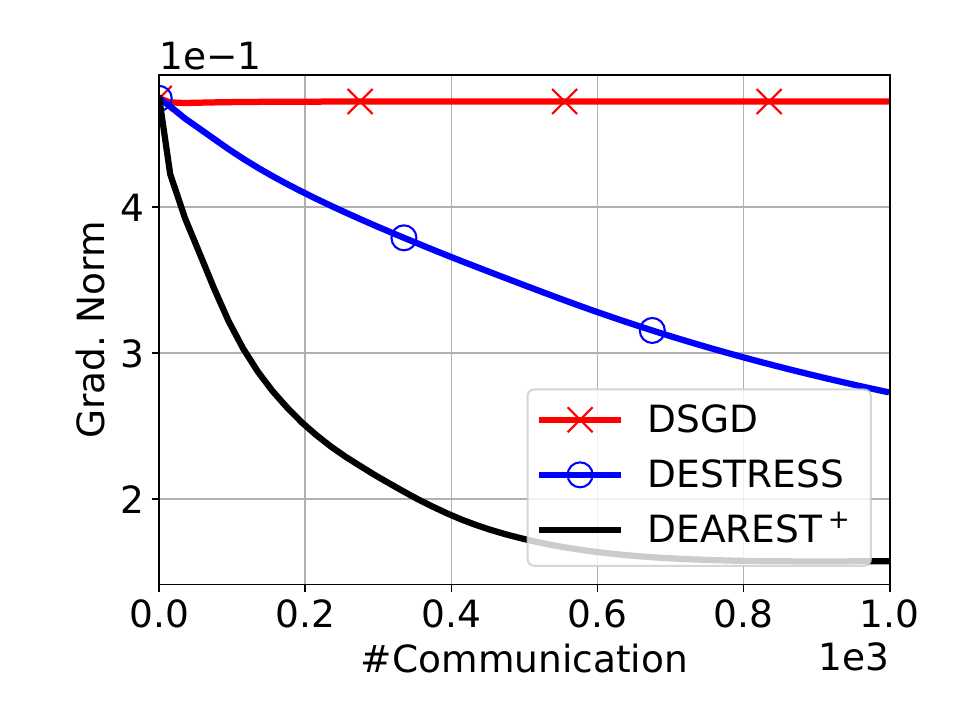} & 
    \includegraphics[scale=0.24]{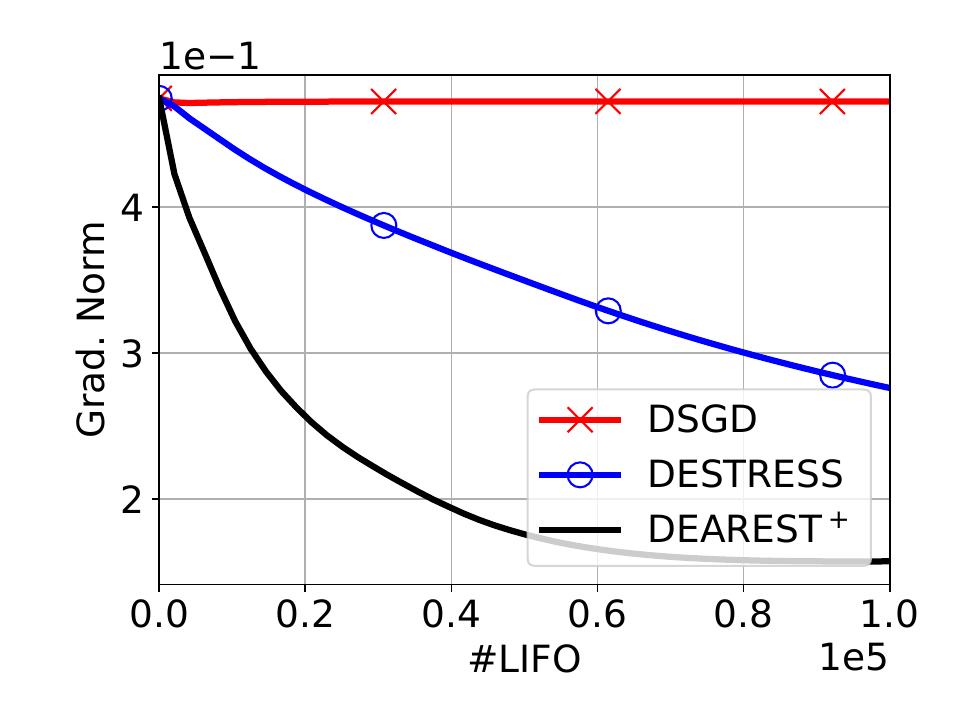}  & \includegraphics[scale=0.24]{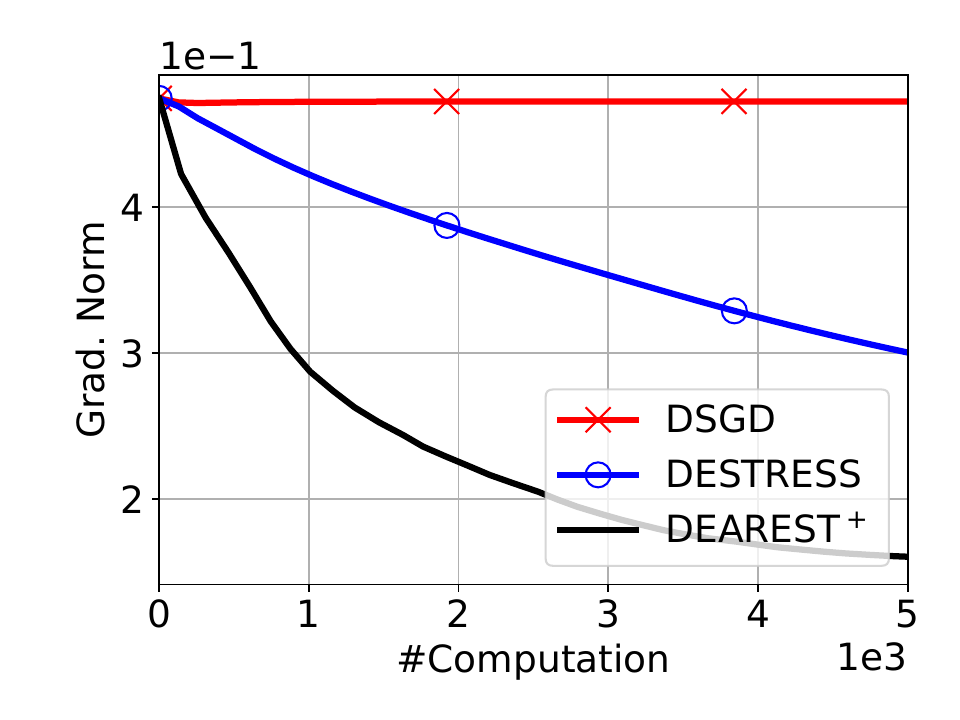}  \\
    \footnotesize (a) Communication &
    \footnotesize (b) LIFO  & 
    \footnotesize (c)  Computation
    \end{tabular} \vskip0.1cm
    \caption{Results on the regularized linear regression (the general nonconvex case).
    }
    \label{fig:nonconvex-linear}
\end{figure}

\begin{figure}
    \centering
    \begin{tabular}{ c c c}
    \includegraphics[scale=0.24]{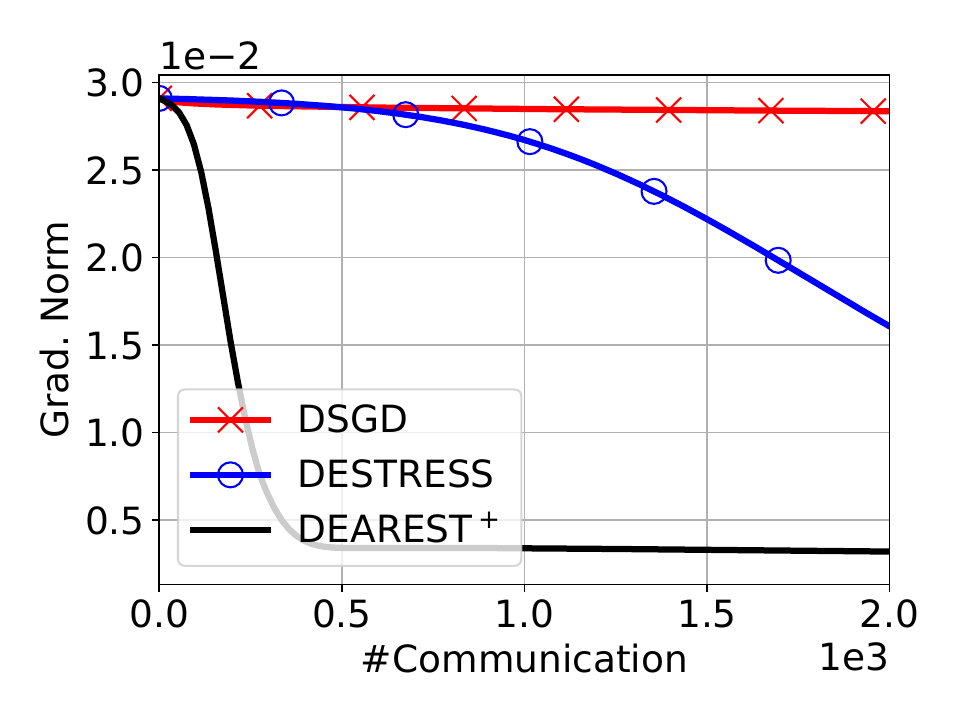} & 
    \includegraphics[scale=0.24]{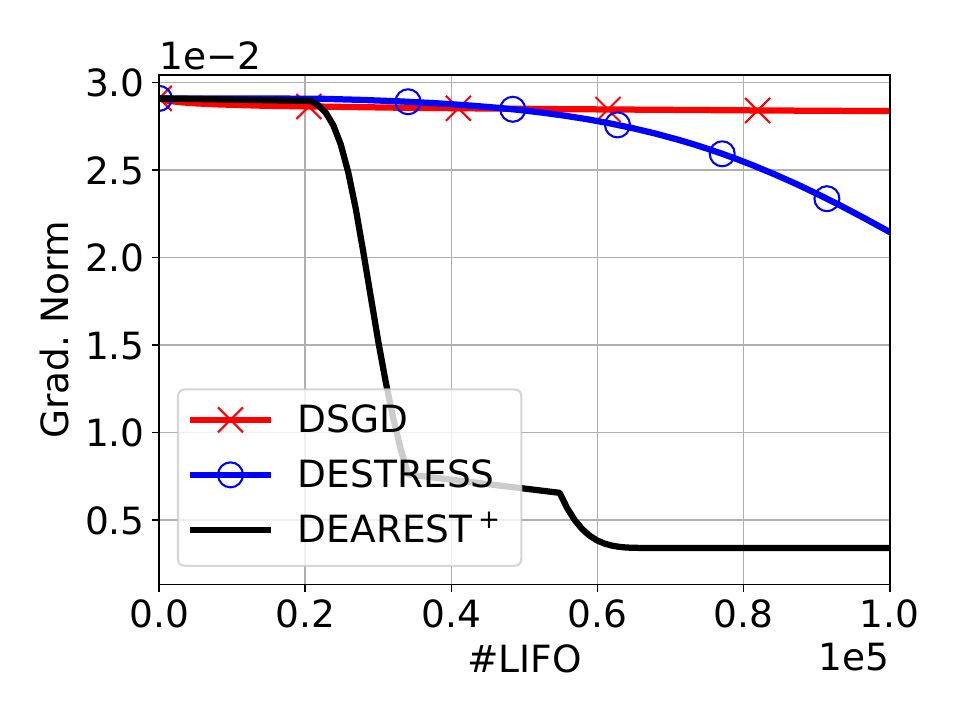}  & \includegraphics[scale=0.24]{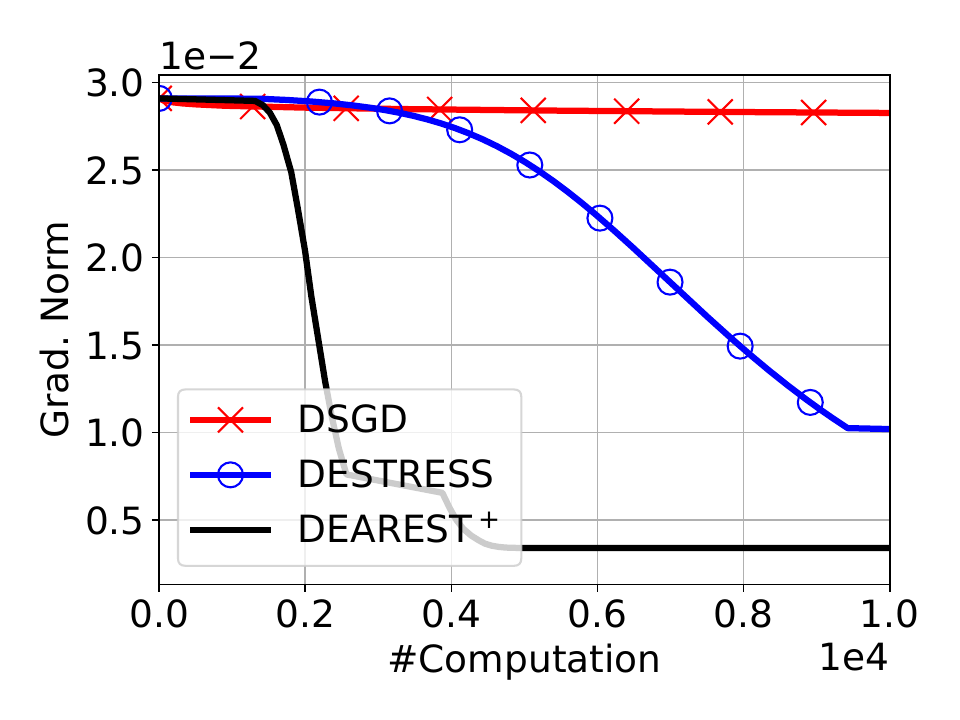}  \\
    \footnotesize (a) Communication &
    \footnotesize (b) LIFO  & 
    \footnotesize (c)  Computation
    \end{tabular} \vskip0.1cm
    \caption{Experiment results on the regularized logistic regression (the general nonconvex case).
    }
    \label{fig:nonconvex-logit}
\end{figure}

We present the experimental results of the communication rounds, the LIFO calls, and the computation rounds against the gradient norm for the general nonconvex problems in Figures~\ref{fig:nonconvex-lb}--\ref{fig:nonconvex-logit}.
We can observe that the proposed \DEAREST~outperforms the  baselines in terms of all three metrics.
We also present the experimental results of the communication rounds, the LIFO calls, and the computation rounds against the objective function value for the PL problems in Figures~\ref{fig:pl-lb}--\ref{fig:pl-logit}.
We can observe that the proposed \DEAREST~outperforms the  baselines in terms of the LIFO calls and the computation rounds. 
Additionally, \DRONE~has the comparable communication rounds to \DEAREST.
This is because of the iteration scheme of \DRONE~can be regarded as the special case of \DEAREST~with $n=1$.
Although the analysis of \DRONE~in our conference paper~\cite{bai2024complexity} only considers the mean-squared smoothness, this method with the appropriate parameter setting (Theorem \ref{thm:PL-1} with $n=1$) can achieve the communication complexity depends on the global smoothness dependency.

\begin{figure}
    \centering
    \begin{tabular}{c c c}
    \includegraphics[scale=0.24]{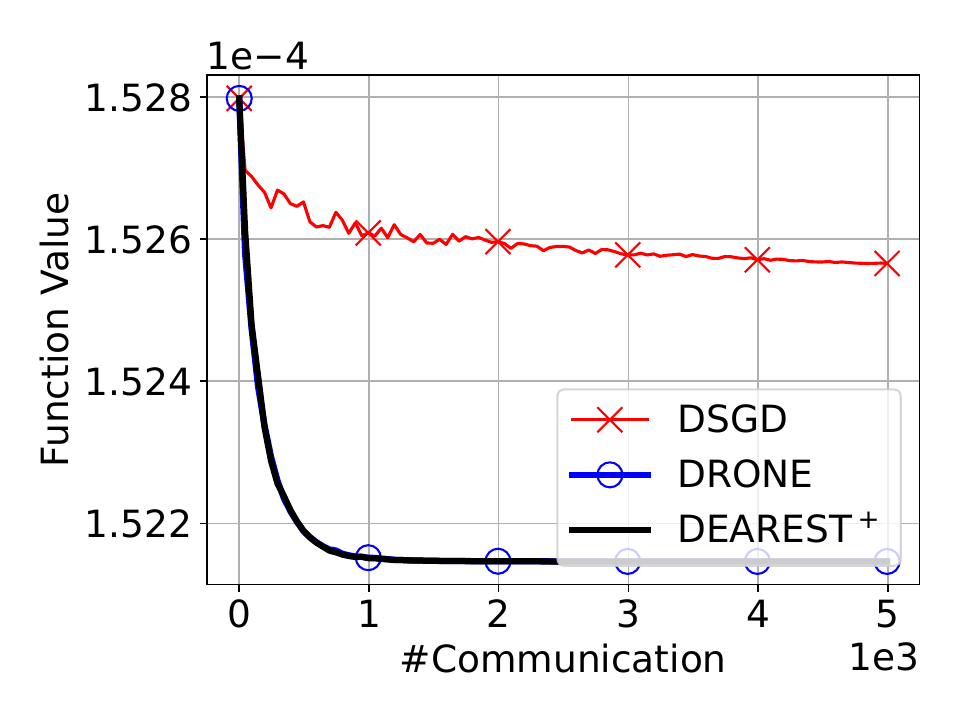} & 
    \includegraphics[scale=0.24]{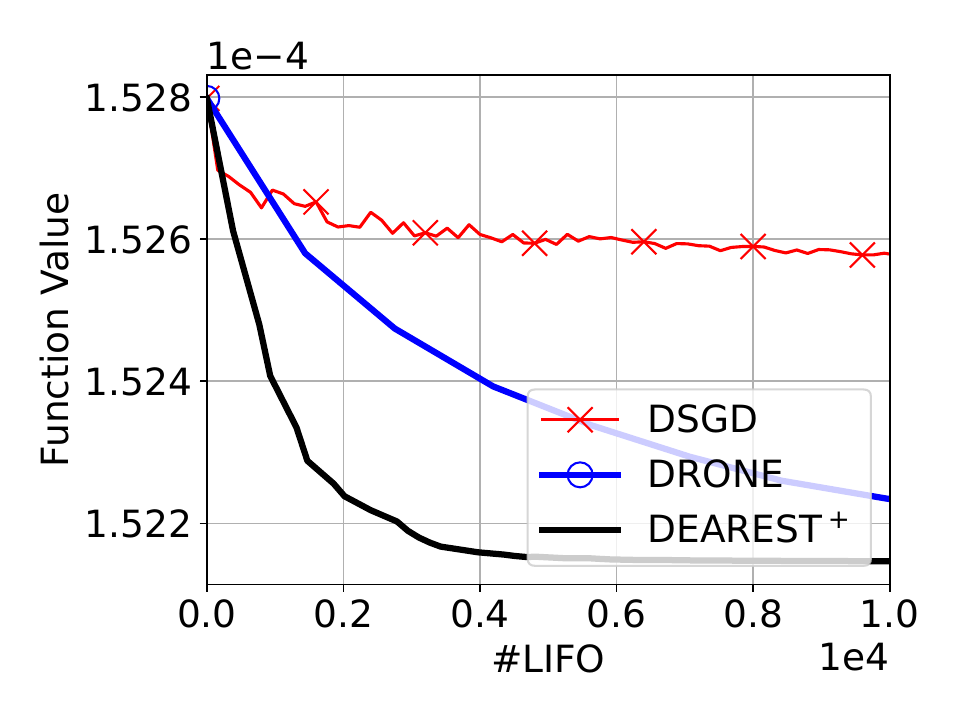}  & \includegraphics[scale=0.24]{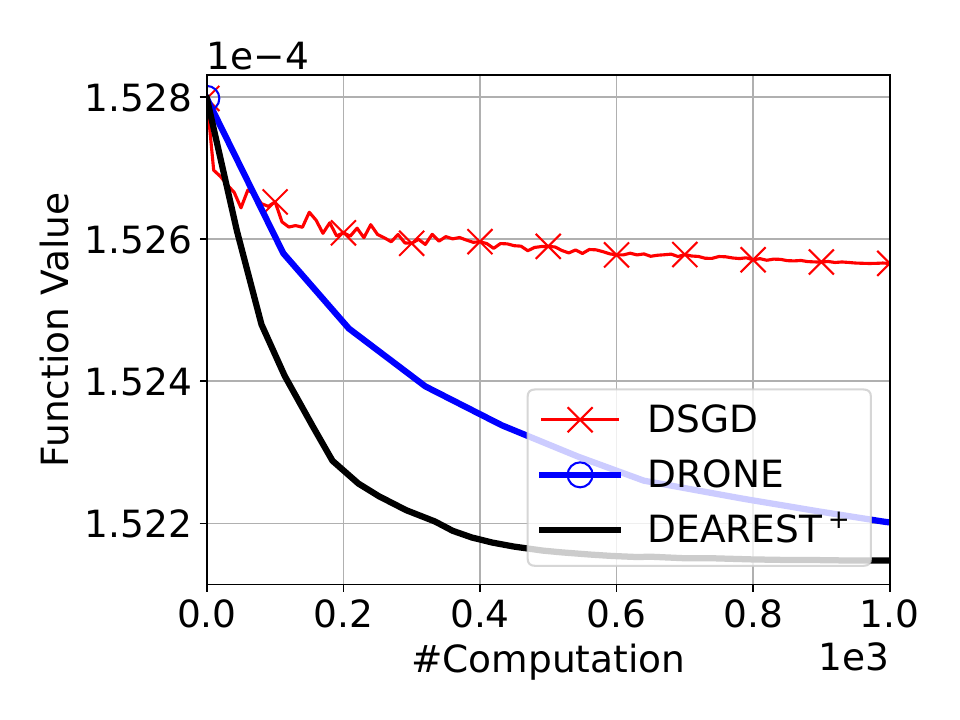}  \\
    \footnotesize (a) Communication &
    \footnotesize (b) LIFO  & 
    \footnotesize (c)  Computation
    \end{tabular} \vskip0.1cm
    \caption{Results on the hard instance (\ref{eq:comm-alpha-beta-pl}) in the lower bound for the PL case.
    }
    \label{fig:pl-lb}
\end{figure}

\begin{figure}
    \centering
    \begin{tabular}{c c c}
    \includegraphics[scale=0.24]{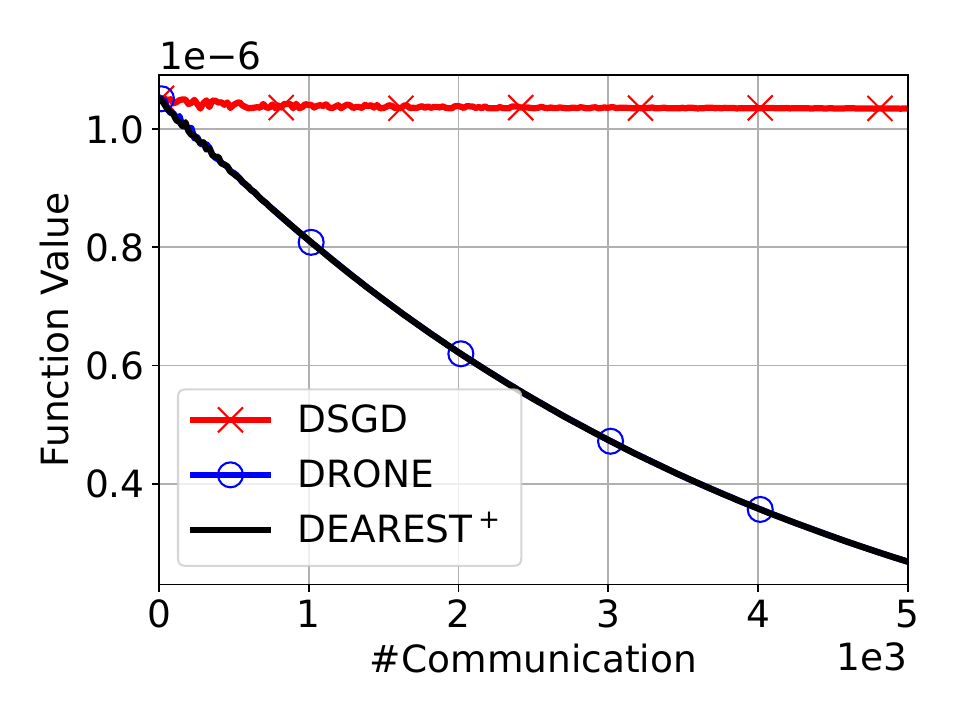} & 
    \includegraphics[scale=0.24]{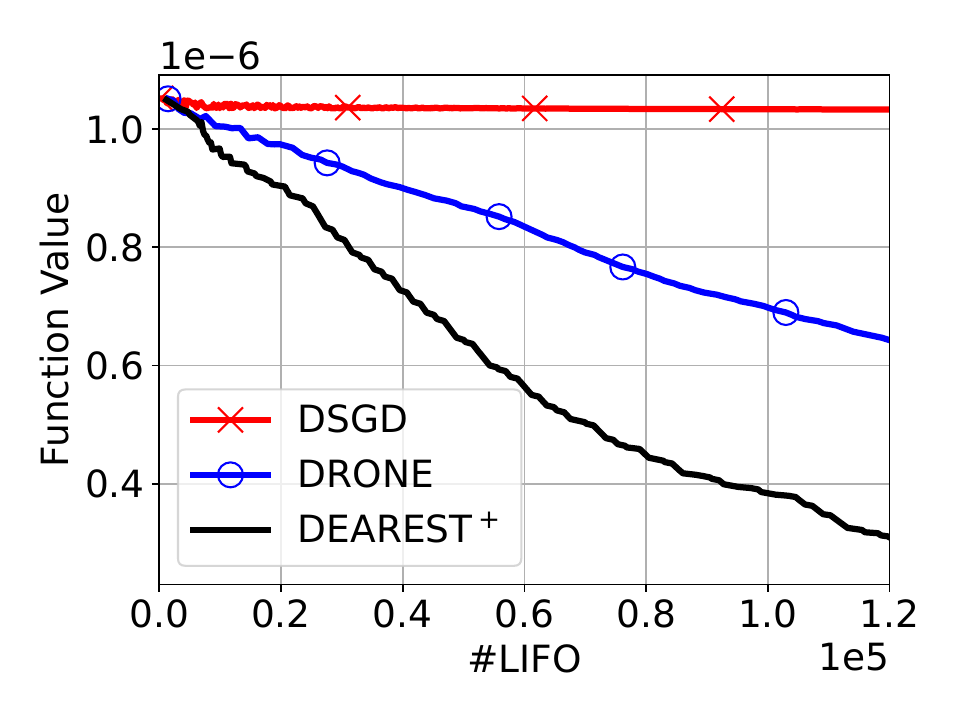}  & \includegraphics[scale=0.24]{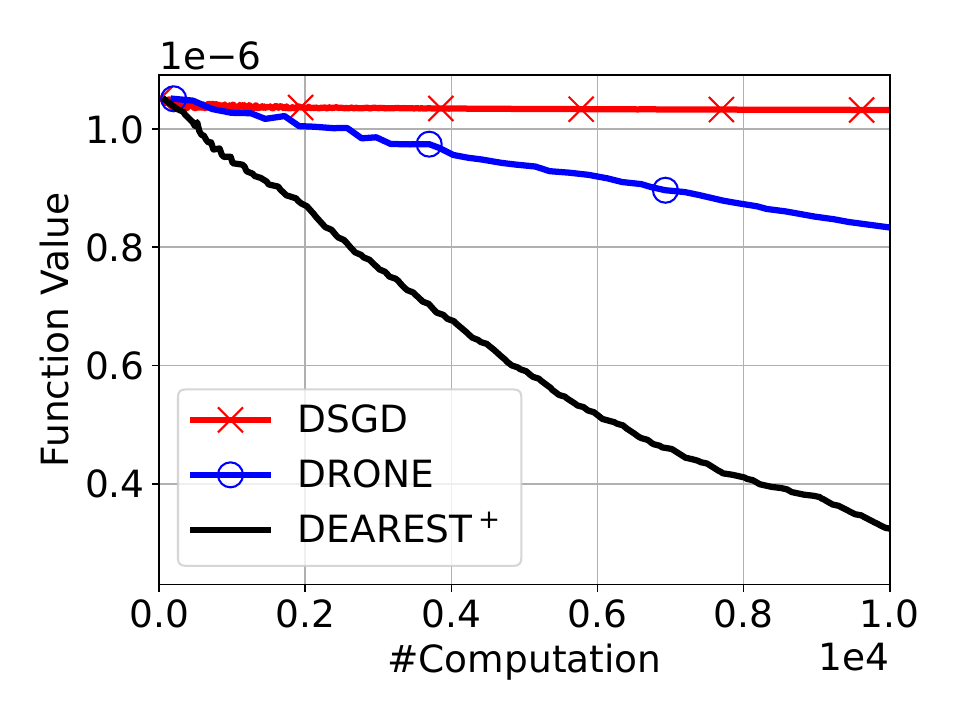}  \\
    \footnotesize (a) Communication &
    \footnotesize (b) LIFO  & 
    \footnotesize (c)  Computation
    \end{tabular} \vskip0.1cm
    \caption{Results on the linear regression (the PL case).
    }
    \label{fig:pl-linear}
\end{figure}

\begin{figure}
    \centering
    \begin{tabular}{c c c}
    \includegraphics[scale=0.24]{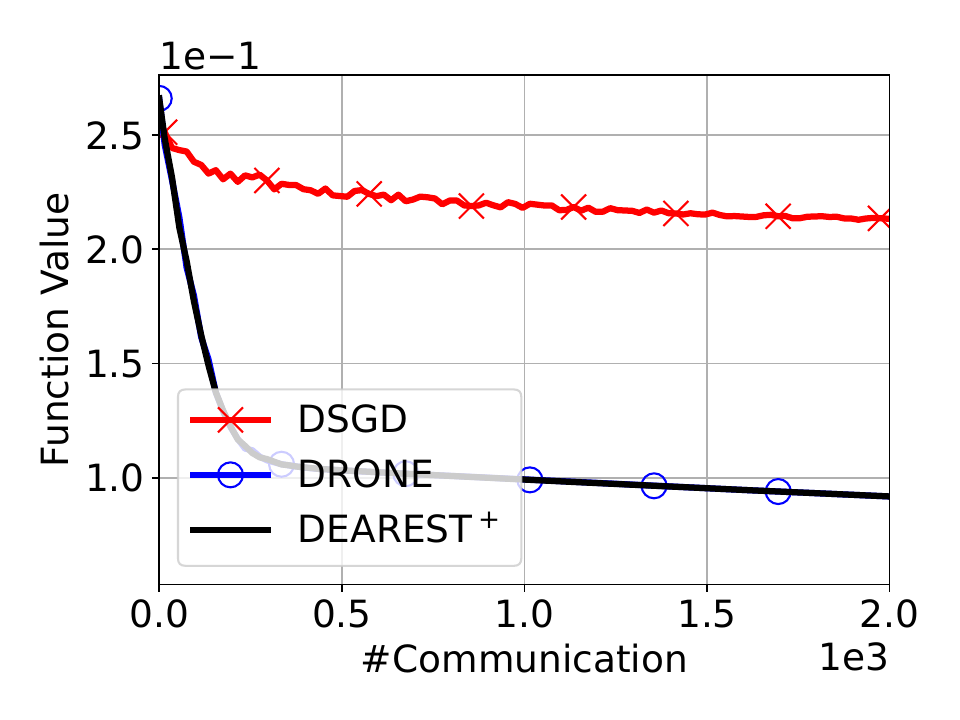} & 
    \includegraphics[scale=0.24]{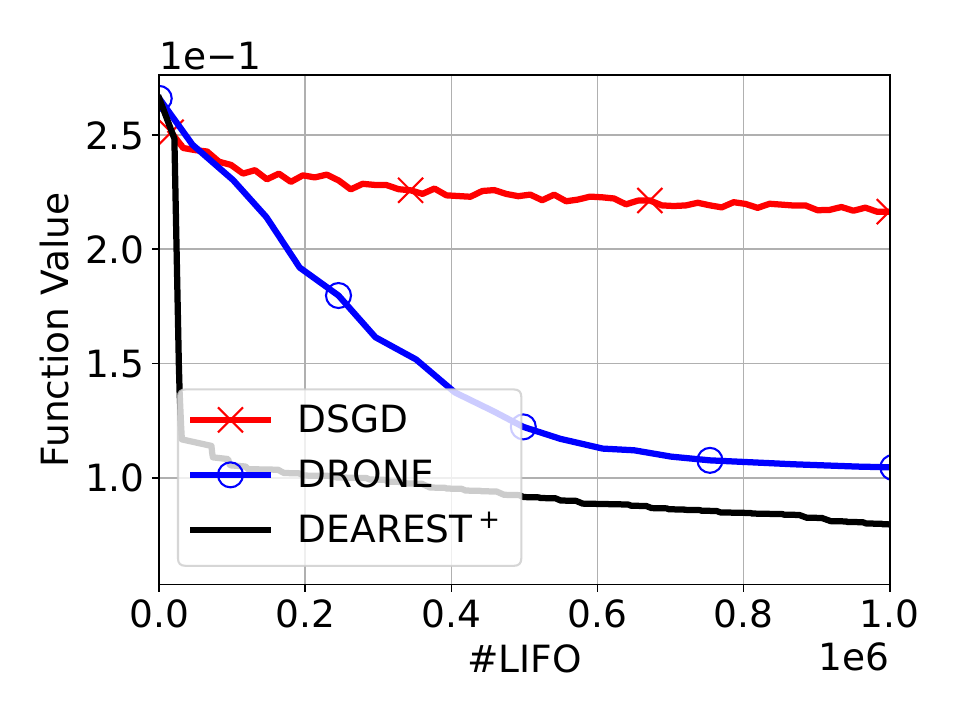}  & \includegraphics[scale=0.24]{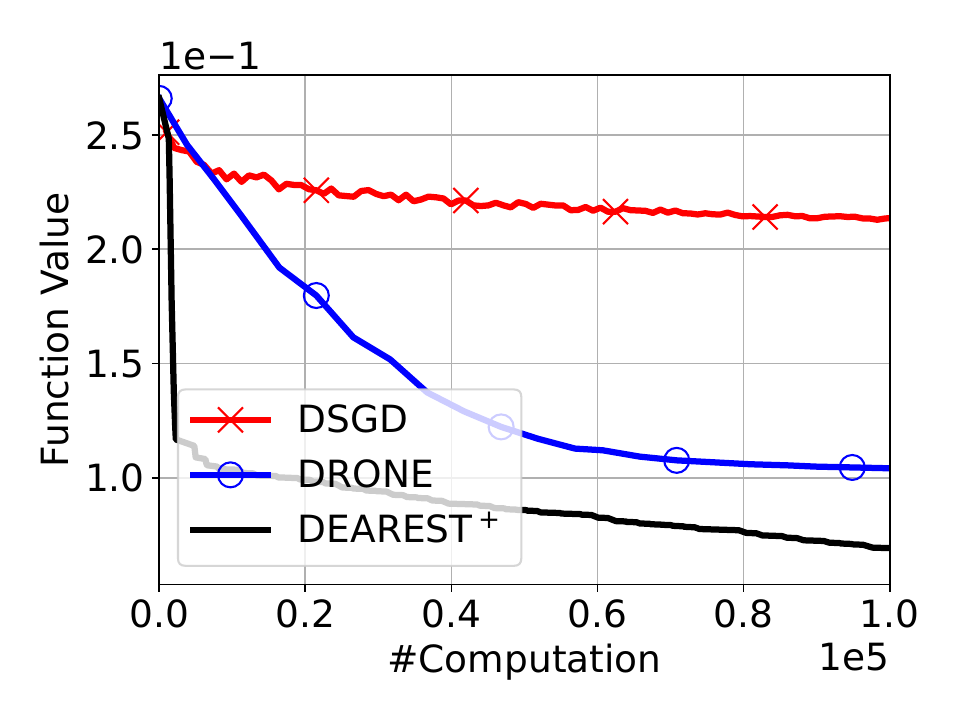}  \\
    \footnotesize (a) Communication &
    \footnotesize (b) LIFO  & 
    \footnotesize (c)  Computation
    \end{tabular} \vskip0.1cm
    \caption{Experiment results on the logistic regression (the PL case).
    }
    \label{fig:pl-logit}
\end{figure}

\section{Conclusion}

In this paper, we study the complexity of decentralized smooth nonconvex finite-sum optimization by considering both the smoothness of the global objective and the mean-squared smoothness of all individual functions. 
We propose \DEAREST~method, which simultaneously achieves the (near) optimal complexity in terms of the communication rounds, the LIFO calls, and the computation rounds.
We also extend our results to PL condition and show the (near) optimality in such case.
In future work, it is possible to apply our ideas to establish the sharper complexity bounds for the decentralized optimization problem over time-varying network \cite{metelev2024decentralized} and consider the problem in online setting~\cite{yuan2022revisiting,lu2021optimal,arjevani2023lower}.
We are also interested in extending our results to address the functions that satisfy the Kurdyka--{\L}ojasiewicz inequality \cite{bolte2014proximal,bolte2007lojasiewicz,attouch2009convergence,zhou2018convergence,fatkhullin2022sharp,jiang2022holderian}

\begin{appendices}

\section{The Proofs in Section \ref{sec:settings}}\label{secA1}

This section provide the proofs for the relationships among the smoothness parameters shown in Section \ref{sec:settings}.

\subsection{The Proof of Proposition \ref{prop:smooth}}
\begin{proof} 
\textbf{Part (a):}
For any $i\in[m]$ and $j\in[n]$, we have
\begin{align}\label{eq:smooth-proof-1}
\Norm{\nabla f_{i,j}(\vx)-\nabla f_{i,j}(\vy)}^2
\leq \sum_{i=1}^m\sum_{j=1}^n\Norm{\nabla f_{i,j}(\vx)-\nabla f_{i,j}(\vy)}^2 
\overset{(\ref{eq:smooth-mean-squared})}{\leq} mn\bL^2\Norm{\vx-\vy}^2
\end{align}
for all $\vx,\vy\in\BR^d$.
where the last step is based on $\{f_{i,j}\}$ is $\bL$-mean-squared-smooth.
Therefore, each individual function $f_{i,j}$ is $\sqrt{mn}\bL$-smooth and we have proved equation~(\ref{eq:smooth-individual1}).
Consequently, we have
\begin{align*}
& \Norm{\nabla f_i(\vx) - \nabla f_i(\vy)}^2
= \Norm{\frac{1}{n}\sum_{j=1}^n(\nabla f_{i,j}(\vx) - \nabla f_{i,j}(\vy))}^2 \\
\leq & \frac{1}{n}\sum_{j=1}^n\Norm{\nabla f_{i,j}(\vx) - \nabla f_{i,j}(\vy)}^2 
\overset{(\ref{eq:smooth-individual1})}{\leq} mn\bL^2\Norm{\vx-\vy}^2
\end{align*}
for all $\vx,\vy\in\BR^d$.
where the first inequality is based on the fact $\Norm{\frac{1}{n}\sum_{i=1}^n \va_i}^2 \leq \frac{1}{n}\sum_{i=1}^n \Norm{\va_i}^2$ for all~$\va_1,\dots,\va_n\in\BR^d$; the second inequality is based on equation (\ref{eq:smooth-proof-1}).

\textbf{Part (b):} 
The mean-squared-smoothness condition means
\begin{align*}
 & \Norm{\nabla f(\vx) - \nabla f(\vy)}^2 \\
= & \Norm{\frac{1}{mn}\sum_{i=1}^m\sum_{j=1}^n\nabla f_{i,j}(\vx) -\frac{1}{mn}\sum_{i=1}^m\sum_{j=1}^n\nabla f_{i,j}(\vy)}^2 \\
\leq & \frac{1}{mn}\sum_{i=1}^m\sum_{j=1}^n\Norm{\nabla f_{i,j}(\vx) -\nabla f_{i,j}(\vy)}^2 
\overset{(\ref{eq:smooth-mean-squared})}{\leq} \bar L^2\Norm{\vx-\vy}^2
\end{align*}
for all $\vx,\vy\in\BR^d$.
where the first inequality is based on the fact 
\begin{align*}
\bigg\|\frac{1}{mn}\sum_{i=1}^m\sum_{j=1}^n \va_{i,j}\bigg\|^2 \leq \frac{1}{mn}\sum_{i=1}^m \sum_{j=1}^n \Norm{\va_{i,j}}^2     
\end{align*}
for all $\va_{1,1}, \dots, \va_{m,n}\in\BR^d$;
the last step is based on $\{f_{i,j}\}$ is $\bL$-mean-squared-smooth.
Therefore, the objective $f$ is $\bL$-smooth.

\textbf{Part (c):}
If the number of components $mn$ is even, we let
\begin{align}\label{eq:exmaple-even}
    f_{i,j}(\vx) = \begin{cases}
        (1+c_{\rm even})g(\vx), & \text{if $i+j$ is even}, \\[0.1cm]
        (1-c_{\rm even})g(\vx), & \text{if $i+j$ is odd},
    \end{cases}
\end{align}
where $g(\vx)=\frac{L}{2}\Norm{\vx}^2$ and
\begin{align}\label{eq:exmaple-even-c}
    c_{\rm even} = \sqrt{\dfrac{\bL^2}{L^2}-1}.
\end{align}
Then we have
\begin{align*}
    f(\vx) &\overset{\hphantom{(00)}}{=} \frac{1}{mn}\sum_{i=1}^m\sum_{j=1}^n f_{i,j}(\vx) \\
    &\overset{(\ref{eq:exmaple-even})}{=}  \frac{1}{mn} \left(\frac{mn(1+c_{\rm even})}{2} + \frac{mn\left(1-c_{\rm even}\,\right)}{2}\right)g(\vx) \\
    &\overset{(\ref{eq:exmaple-even-c})}{=}  g(\vx) = \frac{L}{2}\Norm{\vx}^2
\end{align*}
that means
\begin{align*}
    \Norm{\nabla f(\vx) - \nabla f(\vy)} \leq L\Norm{\vx-\vy},
\end{align*}
and
\begin{align*}
&  \overset{\hphantom{(00)}}{}  \frac{1}{mn}\sum_{i=1}^m\sum_{j=1}^n \Norm{\nabla f_{i,j}(\vx) - \nabla f_{i,j}(\vy)}^2 \\
&\overset{(\ref{eq:exmaple-even})}{=}  \frac{1}{mn}\left(\frac{mn(1+c_{\rm even})^2}{2} + \frac{mn(1-c_{\rm even})^2}{2}\Norm{\nabla g(\vx) -\nabla g(\vy)}^2\right) \\
&\overset{(\ref{eq:exmaple-even-c})}{=}  \left(\frac{(1+c_{\rm even})^2}{2} + \frac{(1-c_{\rm even})^2}{2}\right)\Norm{\nabla g(\vx) -\nabla g(\vy)}^2 \\
& \overset{\hphantom{(00)}}{=}  \left(1+c_{\rm even}^2\right)L^2\Norm{\vx-\vy}^2 = \bL^2\Norm{\vx-\vy}^2
\end{align*}
for all $\vx,\vy\in\BR^d$.
Therefore, the functions $\{f_{i,j}\}$ satisfy Assumption \ref{asm:smooth-global} and \ref{asm:smooth-mean-squared} with the tight $L$ and $\bL$, respectively.

If the number of components $mn$ is odd, we let
\begin{align}\label{eq:exmaple-odd}
    f_{i,j}(\vx) = \begin{cases}
        g(\vx), & \text{if $(i,j)=(1,1)$}, \\[0.1cm]
        \left(1+c_{\rm odd}\,\right)g(\vx), & \text{if $(i,j)\neq(1,1)$ and $i+j$ is even}, \\[0.1cm] 
        \left(1-c_{\rm odd}\,\right)g(\vx), & \text{if $(i,j)\neq(1,1)$ and $i+j$ is odd},
    \end{cases}
\end{align}
where $g:\BR^d\to\BR$ is some $L$-smooth function and 
\begin{align}\label{eq:exmaple-odd-c}
    c_{\rm odd} = \sqrt{\dfrac{mn\bL^2/L^2-1}{mn-1}-1}.
\end{align}
Then we have
\begin{align*}
    f(\vx) &  \overset{\hphantom{(00)}}= \frac{1}{mn}\sum_{i=1}^m\sum_{j=1}^n f_{i,j}(\vx) \\
    & \overset{(\ref{eq:exmaple-odd})}{=}  \frac{1}{mn} \left(g(\vx) + \frac{(mn-1)(1+c_{\rm odd})}{2}g(\vx) + \frac{(mn-1)(1-c_{\rm odd})}{2}g(\vx)\right) \\
    & \overset{\hphantom{(00)}}= g(\vx) = \frac{L}{2}\Norm{\vx}^2
\end{align*}
and
\begin{align*}
& \overset{\hphantom{(00)}}{}   \frac{1}{mn}\sum_{i=1}^m\sum_{j=1}^n \Norm{\nabla f_{i,j}(\vx) - \nabla f_{i,j}(\vy)}^2 \\
& \overset{(\ref{eq:exmaple-odd})}{=} \frac{1}{mn}\left(1+\frac{(mn-1)(1+c_{\rm odd})^2}{2} + \frac{(mn-1)(1-c_{\rm odd})^2}{2}\right)\Norm{\nabla g(\vx) -\nabla g(\vy)}^2 \\
& \overset{(\ref{eq:exmaple-odd-c})}{=}   \frac{1}{mn}\left(1+\frac{mn-1}{2}\left(2+\dfrac{2(mn\bL^2/L^2-1)}{mn-1}-2\right) \right)\Norm{\nabla g(\vx) -\nabla g(\vy)}^2 \\
& \overset{\hphantom{(00)}}{=}  \frac{1}{mn}\left(1+\frac{mn-1}{2}\cdot\dfrac{2(mn\bL^2/L^2-1)}{mn-1} \right)\Norm{\nabla g(\vx) -\nabla g(\vy)}^2 \\
& \overset{\hphantom{(00)}}{=}  \frac{1}{mn}\cdot\frac{mn\bL^2}{L^2}\cdot L^2\Norm{\vx-\vy}^2 
\leq\bL^2\Norm{\vx-\vy}^2 
\end{align*}
for all $\vx,\vy\in\BR^d$.
Therefore, the functions $\{f_{i,j}\}$ satisfy Assumption \ref{asm:smooth-global} and \ref{asm:smooth-mean-squared} with the tight $L$ and $\bL$, respectively.
\end{proof}

\subsection{The Proof of Proposition \ref{prop:smooth-2}}\label{appendix:smooth-2}

\begin{proof}
\textbf{Part (a):}
The local smoothness condition means
\begin{align*}
 & \Norm{\nabla f(\vx) - \nabla f(\vy)}^2 \\
= & \Norm{\frac{1}{m}\sum_{i=1}^m\nabla f_{i}(\vx) - \frac{1}{m}\sum_{i=1}^m\nabla f_{i}(\vy)}^2 \\
\leq & \frac{1}{m}\sum_{i=1}^m\Norm{\nabla f_{i}(\vx) -\nabla f_{i}(\vy)}^2 
\overset{(\ref{eq:smooth-local})}{\leq} L_\ell^2\Norm{\vx-\vy}^2,
\end{align*}
where the first inequality is based on the fact 
$\big\|\frac{1}{m}\sum_{i=1}^m \va_{i}\big\|^2 \leq \frac{1}{m}\sum_{i=1}^m \Norm{\va_{i}}^2$ for all~$\va_{1}, \dots, \va_{m}\in\BR^d$;
the last step is based on each $f_i$ is $L_\ell$-smooth  (Assumption \ref{asm:smooth-local}).
Therefore, the objective $f$ is $L_\ell$-smooth.   

\textbf{Part (b):}
The local mean-squared smoothness condition means
\begin{align*}
 \frac{1}{mn}\sum_{i=1}^m\sum_{j=1}^n\Norm{\nabla f_{i,j}(\vx) - \nabla f_{i,j}(\vy)}^2 
\overset{(\ref{eq:smooth-mean-squared-local})}{\leq}  \frac{1}{m}\sum_{i=1}^m \bL_\ell^2\Norm{\vx-\vy}^2 
= \bL_\ell^2\Norm{\vx-\vy}^2,
\end{align*}
where the first inequality is based on the functions $\{f_{i,j}\}_{j=1}^n$ are $\bL_\ell$-mean-squared smooth for all $i\in[m]$ (Assumption \ref{asm:smooth-local-mean-squared}).
Therefore, the individual functions $\{f_{i,j}\}_{i,j=1}^{m,n}$ are $\bL_\ell$-mean-squared smooth.  

\textbf{Part (c):}
We let
\begin{align*}
    f_{i}(\vx) = \begin{cases}
        \dfrac{L_\ell}{2}\Norm{\vx}^2, & \text{if $i=1$}, \\[0.25cm]
        \dfrac{mL-L_\ell}{2(m-1)}\Norm{\vx}^2, & \text{if $i\geq 2$}.
    \end{cases}
\end{align*}
Then we have 
\begin{align*}
    f(\vx) = \frac{1}{m}\sum_{i=1}^m f_i(\vx) = \frac{1}{m}\left(\frac{L_\ell}{2}+\dfrac{(m-1)(mL-L_\ell)}{2(m-1)}\right)\Norm{\vx}^2 = \frac{L}{2}\Norm{\vx}^2.
\end{align*}
Additionally, the function $f_1$ has the largest (tight) smoothness parameter $L_\ell$ among the functions $\{f_i\}_{i=1}^m$, since it holds
\begin{align*}
    L_\ell \geq \dfrac{mL-L_\ell}{m-1}.
\end{align*}
Therefore, the functions $\{f_{i}\}_{i=1}^m$ satisfy Assumption \ref{asm:smooth-global} and \ref{asm:smooth-local} with the tight $L$ and $L_\ell$, respectively.

If the number of agents $m$ is odd, we let
\begin{align}\label{eq:exmaple2-even}
    f_{i}(\vx) = \begin{cases}
        (1+c_{\rm even})g(\vx), & \text{if $i+j$ is even}, \\[0.1cm]
        (1-c_{\rm even})g(\vx), & \text{if $i+j$ is odd},
    \end{cases}
\end{align}
where $g(\vx)=\frac{L}{2}\Norm{\vx}^2$ and
\begin{align}\label{eq:exmaple-2-even-c}
    c_{\rm even} = \sqrt{\dfrac{\bL^2}{L^2}-1}.
\end{align}

\textbf{Part (d):}
We let
\begin{align*}
    f_{i,j}(\vx) = \begin{cases}
        \dfrac{\bL_\ell}{2}\Norm{\vx}^2, & \text{if $i=1$}, \\[0.25cm]
        \dfrac{m\bL-\bL_\ell}{2(m-1)}\Norm{\vx}^2, & \text{if $i\geq 2$}
    \end{cases}
\end{align*}
for all $j\in[n]$. 
We then have 
\begin{align*}
    & \frac{1}{mn}\sum_{i=1}^m\sum_{j=1}^n \Norm{\nabla f_{i,j}(\vx) - \nabla f_{i,j}(\vy)}^2 \\
    = & \frac{1}{m}\left(\bL_\ell+\dfrac{(m-1)(m\bL-\bL_\ell)}{m-1}\right)\Norm{\vx-\vy}^2 = \bL\Norm{\vx-\vy}^2.
\end{align*}
Additionally, we have
\begin{align*}
\frac{1}{n}\sum_{j=1}^n \Norm{\nabla f_{i,j}(\vx) - \nabla f_{i,j}(\vy)}^2 
= \begin{cases}
        \bL_\ell^2\Norm{\vx-\vy}^2, & \text{if $i=1$}, \\[0.25cm]
        \dfrac{m\bL-\bL_\ell}{m-1}\Norm{\vx-\vy}^2, & \text{if $i\geq 2$},
    \end{cases}
\end{align*}
which means the function set $\{f_{1,j}\}_{j=1}^n$ has the largest (tight) smoothness parameter~$\bL_\ell$ among the function sets $\{f_{1,j}\}_{j=1}^n,\dots,\{f_{m,j}\}_{j=1}^n$, since it holds 
\begin{align*}
    \bL_\ell \geq \dfrac{m\bL-\bL_\ell}{m-1}.
\end{align*}
Therefore, the functions $\{f_{i,j}\}_{i,j=1}^{m,n}$ satisfy Assumption \ref{asm:smooth-mean-squared} and \ref{asm:smooth-local-mean-squared} with the tight $L$ and~$L_\ell$, respectively.
\end{proof}

\section{The Finite-Sum Optimization with Convex Individual Functions}\label{appendix:convex-nonconvex}

Recall that Remark \ref{remark:IFO-tradeoff} says that the IFO complexity of vanilla gradient descent may be sharper than the stochastic recursive gradient method since the ratio $\bL/L$ can be arbitrary large.
However, the similar result does not hold in the convex case.
We consider the finite-sum optimization problem
\begin{align}\label{prob:cvx}
    \min_{\vx\in\BR^d} f^{\rm c} (\vx) \triangleq \frac{1}{n} \sum_{i=1}^n f^{\rm c}_i(\vx),
\end{align}
where $f^{\rm c}:\BR^d\to\BR$ is $L$-smooth and $\mu$-strongly convex, and each $f^{\rm c}_i:\BR^d\to\BR$ is $\Lm$-smooth and convex.
It is well-known that accelerated gradient descent (AGD) \cite{nesterov2018lectures} can find an $\epsilon$-suboptimal solution of problem (\ref{prob:cvx}) with $\fO(\sqrt{L/\mu}\ln(1/\epsilon))$ exact gradient calls, corresponding to the IFO complexity of 
$\fO(n\sqrt{L/\mu}\ln(1/\epsilon))$.
On the other hand, the stochastic variance reduced gradient methods (e.g., Katyusha \cite{allen2017katyusha}) can achieve the IFO complexity of $\fO((n+\sqrt{n\Lm/\mu})\ln(1/\epsilon))$, which is always better or equal to the result of AGD since the ratio $\Lm/L$ (for the tight $\Lm$ and $L$) is no larger than $n$ in the convex setting.

\begin{proposition}
Suppose the differentiable functions $f_1^{\rm c},\dots,f_n^{\rm c}:\BR^d\to\BR$ are convex, and the function $f^{\rm c}=\frac{1}{n}\sum_{i=1}^n f_i^{\rm c}$ is $L$-smooth, then each $f_i^{\rm c}$ is $nL$-smooth. 
\end{proposition}
\begin{proof}
    For all $\vx,\vy\in\BR^d$, the convexity of the individual function means
    \begin{align}\label{eq:cvx-1}
        f^{\rm c}_j(\vy) - f^{\rm c}_j(\vy) - \inner{\nabla f^{\rm c}_j(\vx)}{\vy-\vx} \geq 0
    \end{align}
    for all $j\in[n]$. Combining the smoothness of $f^{\rm c}=\frac{1}{n}\sum_{i=1}^n f_i^{\rm c}$, we have
    \begin{align}\label{eq:cvx-2}
        0 \leq f^{\rm c}(\vy) - f^{\rm c}(\vy) - \inner{\nabla f^{\rm c}(\vx)}{\vy-\vx} \leq \frac{L}{2}\Norm{\vx-\vy}^2.
    \end{align}
    Then we have 
    \begin{align*}
        & f^{\rm c}_i(\vy) - f^{\rm c}_i(\vy) - \inner{\nabla f^{\rm c}_i(\vx)}{\vy-\vx} \\
        \,\,\overset{(\ref{eq:cvx-1})}{\leq}\,\, & \sum_{j=1}^n \big(f^{\rm c}_j(\vy) - f^{\rm c}_j(\vy) - \inner{\nabla f^{\rm c}_j(\vx)}{\vy-\vx}\big) \\
        \,\,\,\,=\,\,\,\, & n\big(f^{\rm c}(\vy) - f^{\rm c}(\vy) - \inner{\nabla f^{\rm c}(\vx)}{\vy-\vx}\big) \\
        \overset{(\ref{eq:cvx-2})}{\leq} & \frac{nL}{2}\Norm{\vx-\vy}^2,
    \end{align*}
    for all $i\in[n]$ and $\vx,\vy\in\BR^d$, which finishes the proof.
\end{proof}

\section{The IFO Algorithm}\label{appendix:IFO}

We consider solving the finite-sum optimization problem
\begin{align*}
    \min_{\vx\in\BR^d} f(\vx)\triangleq \frac{1}{N}\sum_{i=1}^N f_i(\vx)     
\end{align*}
where each $f_i:\BR^d\to\BR$ is differentiable.
We formally present the definition of the incremental first-order oracle (IFO) algorithm as follows 
\cite{fang2018spider,zhou2019lower,li2021page, johnson2013accelerating,zhang2013linear,defazio2014saga,schmidt2017minimizing,allen2017katyusha,hofmann2015variance}.

\begin{definition}\label{def:linear-span}
An IFO algorithm for given initial point~$\vx^0$ is defined as a measurable mapping from functions $\{f_i\}_{i=1}^N$ to an infinite sequence of point and index
pairs $\{(\vx^t, i_t)\}_{t=0}^\infty$ with random variable $i_t\in[N]$, which satisfies
\begin{align*}
\vx^t \in \mathrm{span}\big(\{\vx^0, \ldots, \vx^{t-1}, 
\nabla f_{i_0}(\vx^0), \ldots, \nabla f_{i_{t-1}}(\vx^{t-1})\}\big),
\end{align*}
where $\mathrm{span}(\cdot)$ denotes the linear span and $i_t$ denotes the index of individual function chosen at the $t$-th step. 
\end{definition}

\end{appendices}

\bibliographystyle{plainnat}
\bibliography{sn-bibliography}

\begin{thebibliography}{95}
\providecommand{\natexlab}[1]{#1}
\providecommand{\url}[1]{\texttt{#1}}
\expandafter\ifx\csname urlstyle\endcsname\relax
  \providecommand{\doi}[1]{doi: #1}\else
  \providecommand{\doi}{doi: \begingroup \urlstyle{rm}\Url}\fi

\bibitem[Agarwal and Bottou(2015)]{agarwal2015lower}
Alekh Agarwal and Leon Bottou.
\newblock A lower bound for the optimization of finite sums.
\newblock In \emph{ICML}, 2015.

\bibitem[Agarwal et~al.(2021)Agarwal, Kakade, Lee, and Mahajan]{agarwal2021theory}
Alekh Agarwal, Sham~M. Kakade, Jason~D. Lee, and Gaurav Mahajan.
\newblock On the theory of policy gradient methods: Optimality, approximation, and distribution shift.
\newblock \emph{Journal of Machine Learning Research}, 22\penalty0 (1):\penalty0 4431--4506, 2021.

\bibitem[Allen-Zhu(2017)]{allen2017katyusha}
Zeyuan Allen-Zhu.
\newblock Katyusha: The first direct acceleration of stochastic gradient methods.
\newblock \emph{Journal of Machine Learning Research}, 18\penalty0 (1):\penalty0 8194--8244, 2017.

\bibitem[Allen-Zhu and Hazan(2016)]{allen2016variance}
Zeyuan Allen-Zhu and Elad Hazan.
\newblock Variance reduction for faster non-convex optimization.
\newblock In \emph{ICML}, 2016.

\bibitem[Allen-Zhu et~al.(2019)Allen-Zhu, Li, and Song]{allen2019convergence}
Zeyuan Allen-Zhu, Yuanzhi Li, and Zhao Song.
\newblock A convergence theory for deep learning via over-parameterization.
\newblock In \emph{ICML}, 2019.

\bibitem[Antoniadis et~al.(2011)Antoniadis, Gijbels, and Nikolova]{antoniadis2011penalized}
Anestis Antoniadis, Ir{\`e}ne Gijbels, and Mila Nikolova.
\newblock Penalized likelihood regression for generalized linear models with non-quadratic penalties.
\newblock \emph{Annals of the Institute of Statistical Mathematics}, 63\penalty0 (3):\penalty0 585--615, 2011.

\bibitem[Arioli and Scott(2014)]{arioli2014chebyshev}
Mario Arioli and Jennifer Scott.
\newblock Chebyshev acceleration of iterative refinement.
\newblock \emph{Numerical Algorithms}, 66\penalty0 (3):\penalty0 591--608, 2014.

\bibitem[Arjevani et~al.(2023)Arjevani, Carmon, Duchi, Foster, Srebro, and Woodworth]{arjevani2023lower}
Yossi Arjevani, Yair Carmon, John~C. Duchi, Dylan~J. Foster, Nathan Srebro, and Blake Woodworth.
\newblock Lower bounds for non-convex stochastic optimization.
\newblock \emph{Mathematical Programming}, 199\penalty0 (1):\penalty0 165--214, 2023.

\bibitem[Attouch and Bolte(2009)]{attouch2009convergence}
Hedy Attouch and J{\'e}r{\^o}me Bolte.
\newblock On the convergence of the proximal algorithm for nonsmooth functions involving analytic features.
\newblock \emph{Mathematical Programming}, 116:\penalty0 5--16, 2009.

\bibitem[Bai et~al.(2024)Bai, Liu, and Luo]{bai2024complexity}
Yunyan Bai, Yuxing Liu, and Luo Luo.
\newblock On the complexity of finite-sum smooth optimization under the {P}olyak--{$\L$}ojasiewicz condition.
\newblock In \emph{ICML}, 2024.

\bibitem[Bi et~al.(2022)Bi, Zhang, and Lavaei]{bi2022local}
Yingjie Bi, Haixiang Zhang, and Javad Lavaei.
\newblock Local and global linear convergence of general low-rank matrix recovery problems.
\newblock In \emph{AAAI}, 2022.

\bibitem[Bolte et~al.(2007)Bolte, Daniilidis, and Lewis]{bolte2007lojasiewicz}
J{\'e}r{\^o}me Bolte, Aris Daniilidis, and Adrian Lewis.
\newblock The {{\L}}ojasiewicz inequality for nonsmooth subanalytic functions with applications to subgradient dynamical systems.
\newblock \emph{SIAM Journal on Optimization}, 17\penalty0 (4):\penalty0 1205--1223, 2007.

\bibitem[Bolte et~al.(2014)Bolte, Sabach, and Teboulle]{bolte2014proximal}
J{\'e}r{\^o}me Bolte, Shoham Sabach, and Marc Teboulle.
\newblock Proximal alternating linearized minimization for nonconvex and nonsmooth problems.
\newblock \emph{Mathematical Programming}, 146\penalty0 (1-2):\penalty0 459--494, 2014.

\bibitem[Bottou et~al.(2018)Bottou, Curtis, and Nocedal]{bottou2018optimization}
L{\'e}on Bottou, Frank~E. Curtis, and Jorge Nocedal.
\newblock Optimization methods for large-scale machine learning.
\newblock \emph{Siam Review}, 60\penalty0 (2):\penalty0 223--311, 2018.

\bibitem[Bu et~al.(2019)Bu, Mesbahi, Fazel, and Mesbahi]{bu2019lqr}
Jingjing Bu, Afshin Mesbahi, Maryam Fazel, and Mehran Mesbahi.
\newblock {LQR} through the lens of first order methods: Discrete-time case.
\newblock \emph{arXiv preprint arXiv:1907.08921}, 2019.

\bibitem[Carmon and Duchi(2020)]{carmon2020first}
Yair Carmon and John~C. Duchi.
\newblock First-order methods for nonconvex quadratic minimization.
\newblock \emph{SIAM Review}, 62\penalty0 (2):\penalty0 395--436, 2020.

\bibitem[Carmon et~al.(2020)Carmon, Duchi, Hinder, and Sidford]{carmon2020lower}
Yair Carmon, John~C. Duchi, Oliver Hinder, and Aaron Sidford.
\newblock Lower bounds for finding stationary points {I}.
\newblock \emph{Mathematical Programming}, 184\penalty0 (1):\penalty0 71--120, 2020.

\bibitem[Carmon et~al.(2021)Carmon, Duchi, Hinder, and Sidford]{carmon2021lower}
Yair Carmon, John~C. Duchi, Oliver Hinder, and Aaron Sidford.
\newblock Lower bounds for finding stationary points {II}: first-order methods.
\newblock \emph{Mathematical Programming}, 185\penalty0 (1):\penalty0 315--355, 2021.

\bibitem[Chang and Lin(2011)]{CC01a}
Chih-Chung Chang and Chih-Jen Lin.
\newblock {LIBSVM}: A library for support vector machines.
\newblock \emph{ACM Transactions on Intelligent Systems and Technology}, 2:\penalty0 27:1--27:27, 2011.
\newblock Software available at \url{http://www.csie.ntu.edu.tw/~cjlin/libsvm}.

\bibitem[Chen and Sayed(2012)]{chen2012diffusion}
Jianshu Chen and Ali~H. Sayed.
\newblock Diffusion adaptation strategies for distributed optimization and learning over networks.
\newblock \emph{IEEE Transactions on Signal Processing}, 60\penalty0 (8):\penalty0 4289--4305, 2012.

\bibitem[Chen and Sayed(2015)]{chen2015learning}
Jianshu Chen and Ali~H. Sayed.
\newblock On the learning behavior of adaptive networks—part {I}: Transient analysis.
\newblock \emph{IEEE Transactions on Information Theory}, 61\penalty0 (6):\penalty0 3487--3517, 2015.

\bibitem[Defazio et~al.(2014)Defazio, Bach, and Lacoste-Julien]{defazio2014saga}
Aaron Defazio, Francis Bach, and Simon Lacoste-Julien.
\newblock {SAGA}: A fast incremental gradient method with support for non-strongly convex composite objectives.
\newblock In \emph{NIPS}, 2014.

\bibitem[Facchinei et~al.(2015)Facchinei, Scutari, and Sagratella]{facchinei2015parallel}
Francisco Facchinei, Gesualdo Scutari, and Simone Sagratella.
\newblock Parallel selective algorithms for nonconvex big data optimization.
\newblock \emph{IEEE Transactions on Signal Processing}, 63\penalty0 (7):\penalty0 1874--1889, 2015.

\bibitem[Fang et~al.(2018)Fang, Li, Lin, and Zhang]{fang2018spider}
Cong Fang, Chris~Junchi Li, Zhouchen Lin, and Tong Zhang.
\newblock {SPIDER}: Near-optimal non-convex optimization via stochastic path-integrated differential estimator.
\newblock In \emph{NeurIPS}, 2018.

\bibitem[Fatkhullin and Polyak(2021)]{fatkhullin2021optimizing}
Ilyas Fatkhullin and Boris Polyak.
\newblock Optimizing static linear feedback: Gradient method.
\newblock \emph{SIAM Journal on Control and Optimization}, 59\penalty0 (5):\penalty0 3887--3911, 2021.

\bibitem[Fatkhullin et~al.(2022)Fatkhullin, Etesami, He, and Kiyavash]{fatkhullin2022sharp}
Ilyas Fatkhullin, Jalal Etesami, Niao He, and Negar Kiyavash.
\newblock Sharp analysis of stochastic optimization under global {K}urdyka-{{\L}}ojasiewicz inequality.
\newblock In \emph{NeurIPS}, 2022.

\bibitem[Fazel et~al.(2018)Fazel, Ge, Kakade, and Mesbahi]{fazel2018global}
Maryam Fazel, Rong Ge, Sham Kakade, and Mehran Mesbahi.
\newblock Global convergence of policy gradient methods for the linear quadratic regulator.
\newblock In \emph{ICML}, 2018.

\bibitem[Hardt and Ma(2016)]{hardt2016identity}
Moritz Hardt and Tengyu Ma.
\newblock Identity matters in deep learning.
\newblock In \emph{ICLR}, 2016.

\bibitem[Hendrikx et~al.(2021)Hendrikx, Bach, and Massoulie]{hendrikx2021optimal}
Hadrien Hendrikx, Francis Bach, and Laurent Massoulie.
\newblock An optimal algorithm for decentralized finite-sum optimization.
\newblock \emph{SIAM Journal on Optimization}, 31\penalty0 (4):\penalty0 2753--2783, 2021.

\bibitem[Hofmann et~al.(2015)Hofmann, Lucchi, Lacoste-Julien, and McWilliams]{hofmann2015variance}
Thomas Hofmann, Aurelien Lucchi, Simon Lacoste-Julien, and Brian McWilliams.
\newblock Variance reduced stochastic gradient descent with neighbors.
\newblock In \emph{NIPS}, 2015.

\bibitem[Jiang and Li(2022)]{jiang2022holderian}
Rujun Jiang and Xudong Li.
\newblock H{\"o}lderian error bounds and {K}urdyka-{{\L}}ojasiewicz inequality for the trust region subproblem.
\newblock \emph{Mathematics of Operations Research}, 47\penalty0 (4):\penalty0 3025--3050, 2022.

\bibitem[Johnson and Zhang(2013)]{johnson2013accelerating}
Rie Johnson and Tong Zhang.
\newblock Accelerating stochastic gradient descent using predictive variance reduction.
\newblock In \emph{NIPS}, 2013.

\bibitem[Karimi et~al.(2016)Karimi, Nutini, and Schmidt]{karimi2016linear}
Hamed Karimi, Julie Nutini, and Mark Schmidt.
\newblock Linear convergence of gradient and proximal-gradient methods under the {P}olyak--{{\L}}ojasiewicz condition.
\newblock In \emph{ECML/PKDD}, 2016.

\bibitem[Kovalev et~al.(2020)Kovalev, Horv{\'a}th, and Richt{\'a}rik]{kovalev2020don}
Dmitry Kovalev, Samuel Horv{\'a}th, and Peter Richt{\'a}rik.
\newblock Don’t jump through hoops and remove those loops: {SVRG} and {K}atyusha are better without the outer loop.
\newblock In \emph{ALT}, 2020.

\bibitem[Lan and Zhou(2018)]{lan2018optimal}
Guanghui Lan and Yi~Zhou.
\newblock An optimal randomized incremental gradient method.
\newblock \emph{Mathematical programming}, 171\penalty0 (1):\penalty0 167--215, 2018.

\bibitem[Lei et~al.(2017)Lei, Ju, Chen, and Jordan]{lei2017non}
Lihua Lei, Cheng Ju, Jianbo Chen, and Michael~I. Jordan.
\newblock Non-convex finite-sum optimization via {SCSG} methods.
\newblock In \emph{NIPS}, 2017.

\bibitem[Lewis et~al.(2004)Lewis, Yang, Russell-Rose, and Li]{lewis2004rcv1}
David~D. Lewis, Yiming Yang, Tony Russell-Rose, and Fan Li.
\newblock {RCV1}: A new benchmark collection for text categorization research.
\newblock \emph{Journal of machine learning research}, 5:\penalty0 361--397, 2004.

\bibitem[Li et~al.(2020)Li, Cen, Chen, and Chi]{li2019communication}
Boyue Li, Shicong Cen, Yuxin Chen, and Yuejie Chi.
\newblock Communication-efficient distributed optimization in networks with gradient tracking and variance reduction.
\newblock \emph{Journal of Machine Learning Research}, 21:\penalty0 1--51, 2020.

\bibitem[Li et~al.(2022{\natexlab{a}})Li, Li, and Chi]{li2022destress}
Boyue Li, Zhize Li, and Yuejie Chi.
\newblock {DESTRESS}: Computation-optimal and communication-efficient decentralized nonconvex finite-sum optimization.
\newblock \emph{SIAM Journal on Mathematics of Data Science}, 4\penalty0 (3):\penalty0 1031--1051, 2022{\natexlab{a}}.

\bibitem[Li et~al.(2022{\natexlab{b}})Li, Lin, and Fang]{li2022variance}
Huan Li, Zhouchen Lin, and Yongchun Fang.
\newblock Variance reduced extra and diging and their optimal acceleration for strongly convex decentralized optimization.
\newblock \emph{Journal of Machine Learning Research}, 23\penalty0 (222):\penalty0 1--41, 2022{\natexlab{b}}.

\bibitem[Li et~al.(2018)Li, Ma, and Zhang]{li2018algorithmic}
Yuanzhi Li, Tengyu Ma, and Hongyang Zhang.
\newblock Algorithmic regularization in over-parameterized matrix sensing and neural networks with quadratic activations.
\newblock In \emph{COLT}, 2018.

\bibitem[Li et~al.(2021)Li, Bao, Zhang, and Richt{\'a}rik]{li2021page}
Zhize Li, Hongyan Bao, Xiangliang Zhang, and Peter Richt{\'a}rik.
\newblock {PAGE}: A simple and optimal probabilistic gradient estimator for nonconvex optimization.
\newblock In \emph{ICML}, 2021.

\bibitem[Lian et~al.(2017)Lian, Zhang, Zhang, Hsieh, Zhang, and Liu]{lian2017can}
Xiangru Lian, Ce~Zhang, Huan Zhang, Cho-Jui Hsieh, Wei Zhang, and Ji~Liu.
\newblock Can decentralized algorithms outperform centralized algorithms? a case study for decentralized parallel stochastic gradient descent.
\newblock \emph{NIPS}, 2017.

\bibitem[Liu et~al.(2022)Liu, Zhu, and Belkin]{liu2022loss}
Chaoyue Liu, Libin Zhu, and Mikhail Belkin.
\newblock Loss landscapes and optimization in over-parameterized non-linear systems and neural networks.
\newblock \emph{Applied and Computational Harmonic Analysis}, 59:\penalty0 85--116, 2022.

\bibitem[Liu and Morse(2011)]{liu2011accelerated}
Ji~Liu and A.~Stephen Morse.
\newblock Accelerated linear iterations for distributed averaging.
\newblock \emph{Annual Reviews in Control}, 35\penalty0 (2):\penalty0 160--165, 2011.

\bibitem[Liu et~al.(2024)Liu, Chen, and Luo]{liu2024decentralized}
Yuxing Liu, Lesi Chen, and Luo Luo.
\newblock Decentralized convex finite-sum optimization with better dependence on condition numbers.
\newblock In \emph{ICML}, 2024.

\bibitem[Lojasiewicz(1963)]{lojasiewicz1963topological}
Stanislaw Lojasiewicz.
\newblock A topological property of real analytic subsets.
\newblock \emph{Coll. du CNRS, Les {\'e}quations aux d{\'e}riv{\'e}es partielles}, 117\penalty0 (87-89):\penalty0 2, 1963.

\bibitem[Lu and De~Sa(2021)]{lu2021optimal}
Yucheng Lu and Christopher De~Sa.
\newblock Optimal complexity in decentralized training.
\newblock In \emph{ICML}, 2021.

\bibitem[Maranjyan et~al.(2022)Maranjyan, Safaryan, and Richt{\'a}rik]{maranjyan2022gradskip}
Artavazd Maranjyan, Mher Safaryan, and Peter Richt{\'a}rik.
\newblock {GradSkip}: Communication-accelerated local gradient methods with better computational complexity.
\newblock \emph{arXiv preprint arXiv:2210.16402}, 2022.

\bibitem[Mei et~al.(2020)Mei, Xiao, Szepesvari, and Schuurmans]{mei2020global}
Jincheng Mei, Chenjun Xiao, Csaba Szepesvari, and Dale Schuurmans.
\newblock On the global convergence rates of softmax policy gradient methods.
\newblock In \emph{ICML}, 2020.

\bibitem[Metelev et~al.(2024)Metelev, Chezhegov, Rogozin, Kovalev, Beznosikov, Sholokhov, and Gasnikov]{metelev2024decentralized}
Dmitry Metelev, Savelii Chezhegov, Alexander Rogozin, Dmitry Kovalev, Aleksandr Beznosikov, Alexander Sholokhov, and Alexander Gasnikov.
\newblock Decentralized finite-sum optimization over time-varying networks.
\newblock \emph{arXiv preprint arXiv:2402.02490}, 2024.

\bibitem[Mishchenko et~al.(2022)Mishchenko, Malinovsky, Stich, and Richt{\'a}rik]{mishchenko2022proxskip}
Konstantin Mishchenko, Grigory Malinovsky, Sebastian Stich, and Peter Richt{\'a}rik.
\newblock Proxskip: Yes! local gradient steps provably lead to communication acceleration! finally!
\newblock In \emph{ICML}, 2022.

\bibitem[Motwani and Raghavan(1995)]{motwani1995randomized}
Rajeev Motwani and Prabhakar Raghavan.
\newblock \emph{Randomized algorithms}.
\newblock Cambridge university press, 1995.

\bibitem[Nedic and Ozdaglar(2009)]{nedic2009distributed}
Angelia Nedic and Asuman Ozdaglar.
\newblock Distributed subgradient methods for multi-agent optimization.
\newblock \emph{IEEE Transactions on Automatic Control}, 54\penalty0 (1):\penalty0 48--61, 2009.

\bibitem[Nedic et~al.(2017)Nedic, Olshevsky, and Shi]{nedic2017achieving}
Angelia Nedic, Alex Olshevsky, and Wei Shi.
\newblock Achieving geometric convergence for distributed optimization over time-varying graphs.
\newblock \emph{SIAM Journal on Optimization}, 27\penalty0 (4):\penalty0 2597--2633, 2017.

\bibitem[Nedi{\'c} et~al.(2018)Nedi{\'c}, Olshevsky, and Rabbat]{nedic2018network}
Angelia Nedi{\'c}, Alex Olshevsky, and Michael~G. Rabbat.
\newblock Network topology and communication-computation tradeoffs in decentralized optimization.
\newblock \emph{Proceedings of the IEEE}, 106\penalty0 (5):\penalty0 953--976, 2018.

\bibitem[Nemirovskij and Yudin(1983)]{nemirovskij1983problem}
Arkadij~Semenovi{\v{c}} Nemirovskij and David~Borisovich Yudin.
\newblock \emph{Problem complexity and method efficiency in optimization}.
\newblock Wiley-Interscience, 1983.

\bibitem[Nesterov(2018)]{nesterov2018lectures}
Yurii Nesterov.
\newblock \emph{Lectures on convex optimization}, volume 137.
\newblock Springer, 2018.

\bibitem[Nguyen et~al.(2017)Nguyen, Liu, Scheinberg, and Tak{\'a}{\v{c}}]{nguyen2017sarah}
Lam~M. Nguyen, Jie Liu, Katya Scheinberg, and Martin Tak{\'a}{\v{c}}.
\newblock {SARAH}: A novel method for machine learning problems using stochastic recursive gradient.
\newblock In \emph{ICML}, 2017.

\bibitem[Pham et~al.(2020{\natexlab{a}})Pham, Nguyen, Phan, and Tran-Dinh]{pham2019proxsarah}
Nhan~H Pham, Lam~M Nguyen, Dzung~T Phan, and Quoc Tran-Dinh.
\newblock {ProxSARAH}: An efficient algorithmic framework for stochastic composite nonconvex optimization.
\newblock \emph{Journal of Machine Learning Research}, 21\penalty0 (110):\penalty0 1--48, 2020{\natexlab{a}}.

\bibitem[Pham et~al.(2020{\natexlab{b}})Pham, Nguyen, Phan, and Tran-Dinh]{pham2020proxsarah}
Nhan~H Pham, Lam~M Nguyen, Dzung~T Phan, and Quoc Tran-Dinh.
\newblock Proxsarah: An efficient algorithmic framework for stochastic composite nonconvex optimization.
\newblock \emph{Journal of Machine Learning Research}, 21\penalty0 (110):\penalty0 1--48, 2020{\natexlab{b}}.

\bibitem[Polyak(1963)]{polyak1963gradient}
Boris~Teodorovich Polyak.
\newblock Gradient methods for minimizing functionals.
\newblock \emph{Zhurnal Vychislitel'noi Matematiki i Matematicheskoi Fiziki}, 3\penalty0 (4):\penalty0 643--653, 1963.

\bibitem[Pu and Nedi{\'c}(2021)]{pu2021distributed}
Shi Pu and Angelia Nedi{\'c}.
\newblock Distributed stochastic gradient tracking methods.
\newblock \emph{Mathematical Programming}, 187\penalty0 (1):\penalty0 409--457, 2021.

\bibitem[Qian et~al.(2021)Qian, Qu, and Richt{\'a}rik]{qian2021svrg}
Xun Qian, Zheng Qu, and Peter Richt{\'a}rik.
\newblock L-{SVRG} and {L}-{Katyusha} with arbitrary sampling.
\newblock \emph{Journal of Machine Learning Research}, 22\penalty0 (112):\penalty0 1--47, 2021.

\bibitem[Qu and Li(2019)]{qu2019accelerated}
Guannan Qu and Na~Li.
\newblock Accelerated distributed nesterov gradient descent.
\newblock \emph{IEEE Transactions on Automatic Control}, 65\penalty0 (6):\penalty0 2566--2581, 2019.

\bibitem[Reddi et~al.(2016)Reddi, Hefny, Sra, Poczos, and Smola]{reddi2016stochastic}
Sashank~J. Reddi, Ahmed Hefny, Suvrit Sra, Barnabas Poczos, and Alex Smola.
\newblock Stochastic variance reduction for nonconvex optimization.
\newblock In \emph{ICML}, 2016.

\bibitem[Rozemberczki et~al.(2021)Rozemberczki, Scherer, He, Panagopoulos, Riedel, Astefanoaei, Kiss, Beres, Lopez, Collignon, et~al.]{rozemberczki2021pytorch}
Benedek Rozemberczki, Paul Scherer, Yixuan He, George Panagopoulos, Alexander Riedel, Maria Astefanoaei, Oliver Kiss, Ferenc Beres, Guzman Lopez, Nicolas Collignon, et~al.
\newblock Pytorch geometric temporal: Spatiotemporal signal processing with neural machine learning models.
\newblock In \emph{CIKM}, 2021.

\bibitem[Scaman et~al.(2018)Scaman, Bach, Bubeck, Massouli{\'e}, and Lee]{scaman2018optimal}
Kevin Scaman, Francis Bach, S{\'e}bastien Bubeck, Laurent Massouli{\'e}, and Yin~Tat Lee.
\newblock Optimal algorithms for non-smooth distributed optimization in networks.
\newblock In \emph{NeurIPS}, 2018.

\bibitem[Schmidt et~al.(2017)Schmidt, Le~Roux, and Bach]{schmidt2017minimizing}
Mark Schmidt, Nicolas Le~Roux, and Francis Bach.
\newblock Minimizing finite sums with the stochastic average gradient.
\newblock \emph{Mathematical Programming}, 162\penalty0 (1-2):\penalty0 83--112, 2017.

\bibitem[Shi et~al.(2015)Shi, Ling, Wu, and Yin]{shi2015extra}
Wei Shi, Qing Ling, Gang Wu, and Wotao Yin.
\newblock {EXTRA}: An exact first-order algorithm for decentralized consensus optimization.
\newblock \emph{SIAM Journal on Optimization}, 25\penalty0 (2):\penalty0 944--966, 2015.

\bibitem[Song et~al.(2024)Song, Shi, Pu, and Yan]{song2024optimal}
Zhuoqing Song, Lei Shi, Shi Pu, and Ming Yan.
\newblock Optimal gradient tracking for decentralized optimization.
\newblock \emph{Mathematical Programming}, 207\penalty0 (1):\penalty0 1--53, 2024.

\bibitem[Sun and Hong(2019)]{sun2019distributed}
Haoran Sun and Mingyi Hong.
\newblock Distributed non-convex first-order optimization and information processing: Lower complexity bounds and rate optimal algorithms.
\newblock \emph{IEEE Transactions on Signal processing}, 67\penalty0 (22):\penalty0 5912--5928, 2019.

\bibitem[Sun et~al.(2020)Sun, Lu, and Hong]{sun2020improving}
Haoran Sun, Songtao Lu, and Mingyi Hong.
\newblock Improving the sample and communication complexity for decentralized non-convex optimization: Joint gradient estimation and tracking.
\newblock In \emph{ICML}, 2020.

\bibitem[Sun et~al.(2016)Sun, Qu, and Wright]{sun2016complete}
Ju~Sun, Qing Qu, and John Wright.
\newblock Complete dictionary recovery over the sphere {I}: Overview and the geometric picture.
\newblock \emph{IEEE Transactions on Information Theory}, 63\penalty0 (2):\penalty0 853--884, 2016.

\bibitem[Sun et~al.(2018)Sun, Qu, and Wright]{sun2018geometric}
Ju~Sun, Qing Qu, and John Wright.
\newblock A geometric analysis of phase retrieval.
\newblock \emph{Foundations of Computational Mathematics}, 18\penalty0 (5):\penalty0 1131--1198, 2018.

\bibitem[Sundhar~Ram et~al.(2010)Sundhar~Ram, Nedi{\'c}, and Veeravalli]{sundhar2010distributed}
S.~Sundhar~Ram, Angelia Nedi{\'c}, and Venugopal~V. Veeravalli.
\newblock Distributed stochastic subgradient projection algorithms for convex optimization.
\newblock \emph{Journal of optimization theory and applications}, 147\penalty0 (3):\penalty0 516--545, 2010.

\bibitem[Wai et~al.(2017)Wai, Lafond, Scaglione, and Moulines]{wai2017decentralized}
Hoi-To Wai, Jean Lafond, Anna Scaglione, and Eric Moulines.
\newblock Decentralized {F}rank--{W}olfe algorithm for convex and nonconvex problems.
\newblock \emph{IEEE Transactions on Automatic Control}, 62\penalty0 (11):\penalty0 5522--5537, 2017.

\bibitem[Wang et~al.(2019)Wang, Ji, Zhou, Liang, and Tarokh]{wang2019spiderboost}
Zhe Wang, Kaiyi Ji, Yi~Zhou, Yingbin Liang, and Vahid Tarokh.
\newblock {SpiderBoost} and momentum: Faster variance reduction algorithms.
\newblock In \emph{NeurIPS}, 2019.

\bibitem[Watts and Strogatz(1998)]{watts1998collective}
Duncan~J. Watts and Steven~H. Strogatz.
\newblock Collective dynamics of ‘small-world’ networks.
\newblock \emph{nature}, 393\penalty0 (6684):\penalty0 440--442, 1998.

\bibitem[Woodworth and Srebro(2016)]{woodworth2016tight}
Blake~E. Woodworth and Nati Srebro.
\newblock Tight complexity bounds for optimizing composite objectives.
\newblock In \emph{NIPS}, 2016.

\bibitem[Xin et~al.(2022)Xin, Khan, and Kar]{xin2022fast}
Ran Xin, Usman~A. Khan, and Soummya Kar.
\newblock Fast decentralized nonconvex finite-sum optimization with recursive variance reduction.
\newblock \emph{SIAM Journal on Optimization}, 32\penalty0 (1):\penalty0 1--28, 2022.

\bibitem[Yang et~al.(2019)Yang, Yi, Wu, Yuan, Wu, Meng, Hong, Wang, Lin, and Johansson]{yang2019survey}
Tao Yang, Xinlei Yi, Junfeng Wu, Ye~Yuan, Di~Wu, Ziyang Meng, Yiguang Hong, Hong Wang, Zongli Lin, and Karl~H Johansson.
\newblock A survey of distributed optimization.
\newblock \emph{Annual Reviews in Control}, 47:\penalty0 278--305, 2019.

\bibitem[Ye et~al.(2023)Ye, Luo, Zhou, and Zhang]{ye2023multi}
Haishan Ye, Luo Luo, Ziang Zhou, and Tong Zhang.
\newblock Multi-consensus decentralized accelerated gradient descent.
\newblock \emph{Journal of Machine Learning Research}, 24\penalty0 (306):\penalty0 1--50, 2023.

\bibitem[Yuan et~al.(2016)Yuan, Ling, and Yin]{yuan2016convergence}
Kun Yuan, Qing Ling, and Wotao Yin.
\newblock On the convergence of decentralized gradient descent.
\newblock \emph{SIAM Journal on Optimization}, 26\penalty0 (3):\penalty0 1835--1854, 2016.

\bibitem[Yuan et~al.(2022{\natexlab{a}})Yuan, Huang, Chen, Zhang, Zhang, and Pan]{yuan2022revisiting}
Kun Yuan, Xinmeng Huang, Yiming Chen, Xiaohan Zhang, Yingya Zhang, and Pan Pan.
\newblock Revisiting optimal convergence rate for smooth and non-convex stochastic decentralized optimization.
\newblock \emph{Advances in Neural Information Processing Systems}, 35:\penalty0 36382--36395, 2022{\natexlab{a}}.

\bibitem[Yuan et~al.(2022{\natexlab{b}})Yuan, Gower, and Lazaric]{yuan2022general}
Rui Yuan, Robert~M. Gower, and Alessandro Lazaric.
\newblock A general sample complexity analysis of vanilla policy gradient.
\newblock In \emph{AISTATS}, 2022{\natexlab{b}}.

\bibitem[Yue et~al.(2023)Yue, Fang, and Lin]{yue2023lower}
Pengyun Yue, Cong Fang, and Zhouchen Lin.
\newblock On the lower bound of minimizing polyak-{\l}ojasiewicz functions.
\newblock In \emph{The Thirty Sixth Annual Conference on Learning Theory}, pages 2948--2968. PMLR, 2023.

\bibitem[Zeng et~al.(2018)Zeng, Ouyang, Lau, Lin, and Yao]{zeng2018global}
Jinshan Zeng, Shikang Ouyang, Tim Tsz-Kit Lau, Shaobo Lin, and Yuan Yao.
\newblock Global convergence in deep learning with variable splitting via the {K}urdyka-{{\L}}ojasiewicz property.
\newblock \emph{arXiv preprint arXiv:1803.00225}, 9, 2018.

\bibitem[Zhan et~al.(2022)Zhan, Wu, and Gao]{zhan2022efficient}
Wenkang Zhan, Gang Wu, and Hongchang Gao.
\newblock Efficient decentralized stochastic gradient descent method for nonconvex finite-sum optimization problems.
\newblock In \emph{Proceedings of the AAAI Conference on Artificial Intelligence}, volume~36, pages 9006--9013, 2022.

\bibitem[Zhang et~al.(2013)Zhang, Mahdavi, and Jin]{zhang2013linear}
Lijun Zhang, Mehrdad Mahdavi, and Rong Jin.
\newblock Linear convergence with condition number independent access of full gradients.
\newblock In \emph{NIPS}, 2013.

\bibitem[Zhong et~al.(2023)Zhong, Kuffner, and Lahiri]{zhong2023online}
Yanjie Zhong, Todd Kuffner, and Soumendra Lahiri.
\newblock Online bootstrap inference with nonconvex stochastic gradient descent estimator.
\newblock \emph{arXiv preprint arXiv:2306.02205}, 2023.

\bibitem[Zhou and Gu(2019)]{zhou2019lower}
Dongruo Zhou and Quanquan Gu.
\newblock Lower bounds for smooth nonconvex finite-sum optimization.
\newblock In \emph{ICML}, 2019.

\bibitem[Zhou et~al.(2018{\natexlab{a}})Zhou, Xu, and Gu]{zhou2018stochastic}
Dongruo Zhou, Pan Xu, and Quanquan Gu.
\newblock Stochastic nested variance reduction for nonconvex optimization.
\newblock In \emph{NeurIPS}, 2018{\natexlab{a}}.

\bibitem[Zhou et~al.(2019)Zhou, Yuan, and Feng]{zhou2019faster}
Pan Zhou, Xiao-Tong Yuan, and Jiashi Feng.
\newblock Faster first-order methods for stochastic non-convex optimization on {Riemannian} manifolds.
\newblock In \emph{AISTATS}, 2019.

\bibitem[Zhou et~al.(2018{\natexlab{b}})Zhou, Wang, and Liang]{zhou2018convergence}
Yi~Zhou, Zhe Wang, and Yingbin Liang.
\newblock Convergence of cubic regularization for nonconvex optimization under {KL} property.
\newblock In \emph{NeurIPS}, 2018{\natexlab{b}}.

\end{thebibliography}

\end{document}